\date{}
\title{\vspace{-1cm}Tur\'an numbers of bipartite graphs plus an odd cycle}
\author{ Peter Allen\thanks{ Department of Mathematics, London School of
Economics, UK, p.d.allen@lse.ac.uk. Research supported in part by FAPESP
(Proc.~2010/09555-7).} \and Peter Keevash\thanks{ School of Mathematical Sciences, Queen Mary University of London, UK,
p.keevash@qmul.ac.uk. Research supported in part by ERC grant 239696 and EPSRC
grant EP/G056730/1.} \and Benny Sudakov\thanks{ Department of Mathematics,
University of California, Los Angeles, CA, USA, bsudakov@math.ucla.edu. Research
supported in part by NSF grant DMS-1101185, by AFOSR MURI grant FA9550-10-1-0569 and by a USA-Israel BSF
grant.} \and Jacques Verstra\"ete\thanks{ Department of
Mathematics, University of California, San Diego, CA, USA,
jverstra@math.ucsd.edu. Research supported by NSF Grant DMS-110 1489.} }
\newtheorem{thm}{Theorem}[section] 
\newtheorem{prop}[thm]{Proposition}
\newtheorem{lem}[thm]{Lemma}
\newtheorem{conj}[thm]{Conjecture}
\newtheorem{prob}[thm]{Problem}
\newcounter{foo}
\newtheorem{definition}[foo]{Definition}
\newcommand{\mc}[1]{\mathcal{#1}}
\newcommand{\mb}[1]{\mathbb{#1}}
\newcommand{\eps}{\varepsilon} 
\newcommand{\sub}{\subseteq}
\newcommand{\ex}[2]{\text{ex}\big({#1},{#2}\big)}
\newcommand{\z}[2]{\text{z}\big({#1},{#2}\big)}
\newcommand{\zz}[3]{\text{z}\big({#1},{#2},{#3}\big)}
\def\COMMENT#1{}
\begin{document}

\maketitle

\begin{abstract}
For an odd integer $k$, let $\mathcal{C}_k = \{C_3,C_5,\dots,C_k\}$ denote the
family of all odd cycles of length at most $k$ and let $\mathcal{C}$ denote the
family of all odd cycles. Erd\H{o}s and Simonovits~\cite{ESi1} conjectured that
for every family $\mathcal{F}$ of bipartite graphs, there exists $k$ such that
$\ex{n}{\mathcal{F} \cup \mathcal{C}_k} \sim \ex{n}{\mathcal{F} \cup
\mathcal{C}}$ as $n \rightarrow \infty$. This conjecture was proved by Erd\H{o}s
and Simonovits when $\mathcal{F} = \{C_4\}$, and for certain families of even
cycles in~\cite{KSV}. In this paper, we give a general approach to the
conjecture using Scott's sparse regularity lemma. Our approach proves the
conjecture for complete bipartite graphs $K_{2,t}$ and $K_{3,3}$: we obtain more
strongly that for any odd $k \geq 5$, \[ \ex{n}{\mathcal{F} \cup \{C_k\}} \sim
\ex{n}{\mathcal{F} \cup \mathcal{C}}\] and we show further that the extremal
graphs can be made bipartite by deleting very few edges. In contrast, this
formula does not extend to triangles -- the case $k = 3$ -- and we give an
algebraic construction for odd $t \geq 3$ of $K_{2,t}$-free $C_3$-free graphs
with substantially more edges than an extremal $K_{2,t}$-free bipartite graph on
$n$ vertices. Our general approach to the Erd\H{o}s-Simonovits conjecture is
effective based on some reasonable assumptions on the maximum number of edges in
an $m$ by $n$ bipartite $\mathcal{F}$-free graph.
\end{abstract}

\section{Introduction}

Let $\mathcal{F}$ be a family of graphs. Then we say that a graph is
\emph{$\mathcal{F}$-free} if it contains no member of $\mathcal{F}$ as a (not
necessarily induced) subgraph. We define the \emph{Tur\'an number}
$\ex{n}{\mathcal{F}}$ to be the maximum number of edges possible in a
$\mathcal{F}$-free graph on $n$ vertices. When $\mc{F}=\{F\}$ consists of a single forbidden graph we denote the Tur\'an number by $\ex{n}{F}$.
A classical theorem of Tur\'an \cite{TuranThm} gives an exact result for $\ex{n}{K_r}$, where
$K_r$ is the complete graph on $r$ vertices: the unique largest $K_r$-free graph
on $n$ vertices is the complete $(r - 1)$-partite graph with part sizes as equal
as possible. In general, Erd\H{o}s, Stone and Simonovits \cite{ErdSto,ErdSimExt}
showed that
$\ex{n}{\mc{F}} \sim (1-1/r)\binom{n}{2}$, where $r = \min \{ \chi(F)-1: F \in \mc{F}\}$.
Here we write $f(n) \sim g(n)$ for functions $f,g : \mathbb N \rightarrow \mathbb R$ if $\lim_{n \rightarrow \infty} f(n)/g(n) = 1$.
This determines the Tur\'an number asymptotically when $\mc{F}$ consists only of non-bipartite graphs. Furthermore,
Erd\H{o}s and Simonovits~\cite{ErdSimX} proved that the error term is zero if the family
$\mathcal{F}$ contains an edge-critical $(r+1)$-chromatic graph (i.e. one from
which one edge can be removed to decrease the chromatic number). For more
general families $\mathcal{F}$ the error term is necessary, and the complete
balanced $r$-partite graph is not extremal. In that case, it turns out that the
error term is controlled by Tur\'{a}n numbers for bipartite graphs~\cite{Simonovits}. The major open problem in the area, then, is to determine the behavior of
$\ex{n}{\mathcal{F}}$ for families $\mathcal{F}$ including bipartite graphs.

\subsection{Tur\'{a}n Numbers for Bipartite Graphs}

The problem of determining $\ex{n}{\mathcal{F}}$ when $\mathcal{F}$ contains bipartite graphs seems in general to be very hard: in almost all cases we do not know the order of magnitude of $\ex{n}{\mathcal{F}}$. In a seminal paper~\cite{ESi1}, Erd\H{o}s and Simonovits
made a number of broad conjectures on Tur\'{a}n numbers for bipartite graphs. In
particular
(see Conjecture 2 in~\cite{ESi1}) they made the following conjecture:

\begin{conj}\label{firstconj}
For every finite nonempty family $\mathcal{F}$ of graphs, there exist $\sigma,\alpha \geq 0$ such that
\[ \lim_{n \rightarrow \infty}  \frac{\ex{n}{\mathcal{F}}}{n^{\alpha}} = \sigma.\]
\end{conj}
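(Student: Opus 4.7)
The plan is to attack the conjecture in stages, since in full generality it subsumes several notoriously open problems. First I would handle the case where $\mathcal{F}$ contains a non-bipartite graph: if $r = \min\{\chi(F)-1 : F\in \mathcal{F}\} \geq 2$, then the Erd\H{o}s--Stone--Simonovits theorem quoted above gives $\ex{n}{\mathcal{F}} \sim (1-1/r)\binom{n}{2}$, so $\alpha = 2$ and $\sigma = (1-1/r)/2$ suffice. Thus the real content of Conjecture~\ref{firstconj} lies in the bipartite case, and from now on I focus on a family $\mathcal{F}$ of bipartite graphs.

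For the bipartite case I would split the argument into three steps: (i) pin down a candidate exponent $\alpha$; (ii) show $\ex{n}{\mathcal{F}} = \Theta(n^\alpha)$; and (iii) prove the ratio $\ex{n}{\mathcal{F}}/n^\alpha$ converges. Steps (i) and (ii) would combine an upper bound from the K\H{o}v\'ari--S\'os--Tur\'an method or dependent random choice with a matching lower bound from an explicit algebraic or random construction. For step (iii) a natural approach is Fekete-style: if one can glue near-extremal $\mathcal{F}$-free graphs on $n$ and $m$ vertices into an $\mathcal{F}$-free graph on $n+m$ vertices with edge count at least $\ex{n}{\mathcal{F}} + \ex{m}{\mathcal{F}} - o(n^\alpha+m^\alpha)$, then a supermultiplicative-type lemma forces convergence of the normalised ratios to some $\sigma$. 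An alternative is a sparse graph-limits argument: rescale a sequence of extremal $\mathcal{F}$-free graphs and show that any subsequential limit has the same edge density, which then pins down $\sigma$ uniquely. Scott's sparse regularity lemma, used later in this paper, provides a plausible analytic tool for controlling such limits and identifying a canonical structure of the extremal graphs.

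The main obstacle, and the reason why Conjecture~\ref{firstconj} has resisted attack for decades, is step (ii): for most bipartite families the very order of magnitude of $\ex{n}{\mathcal{F}}$ is not known. Even $\ex{n}{K_{t,t}}$ for $t\geq 4$ has no matching construction, and proving the conjecture in full generality would in particular settle every Zarankiewicz problem. Without a candidate $\alpha$, step (iii) has no target, and even when $\alpha$ is known the Fekete-style gluing tends to lose a significant lower-order term unless the extremal graphs have a rigid self-similar structure. The realistic strategy is therefore to prove the conjecture family by family: first establish $\Theta(n^\alpha)$ via explicit constructions (as is done for $K_{2,t}$ and $K_{3,3}$ elsewhere in this paper), then adapt the regularity/limits machinery to squeeze out the exact asymptotic constant $\sigma$ in each case.
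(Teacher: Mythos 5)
This is a \emph{conjecture} of Erd\H{o}s and Simonovits, stated in the paper as an open problem, and the paper offers no proof of it (nor claims to); the surrounding text emphasises that even the order of magnitude of $\ex{n}{F}$ is unknown for many single bipartite graphs $F$ such as $K_{4,4}$, $C_8$, and $Q_3$. So there is no proof in the paper against which to check your argument. Your write-up, to its credit, does not actually claim to prove the conjecture either: you correctly dispatch the non-bipartite case via Erd\H{o}s--Stone--Simonovits (with $\alpha=2$, $\sigma=(1-1/r)/2$), correctly isolate the bipartite case as the real content, and then honestly record that step (ii) --- determining the growth exponent --- is the fundamental open obstacle. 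That assessment matches the paper's own framing, where ``smoothness'' is introduced precisely as a strengthened form of Conjecture~\ref{firstconj} that can be verified in special cases (e.g.\ $K_{2,t}$, $K_{3,3}$, $\{K_{2,t},B_t\}$) and then fed into Theorem~\ref{thm:ExtCycle}.

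Two cautions on the speculative parts of your sketch. First, the Fekete-style gluing you suggest for step (iii) does not obviously go through: a disjoint union of two $\mathcal{F}$-free graphs on $n$ and $m$ vertices is certainly $\mathcal{F}$-free, but it only gives $\ex{n+m}{\mathcal{F}}\ge \ex{n}{\mathcal{F}}+\ex{m}{\mathcal{F}}$, and superadditivity of $\ex{n}{\mathcal{F}}$ in $n$ does not yield convergence of $\ex{n}{\mathcal{F}}/n^\alpha$ when $\alpha>1$ (Fekete's lemma gives convergence of $f(n)/n$ for superadditive $f$, not of $f(n)/n^\alpha$). To make this work one would need a much stronger ``self-similar'' merging construction, which is not available. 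Second, the sparse-limits idea runs into exactly the problem the paper highlights for the sparse regularity lemma: regularity in a sparse cluster graph does not by itself control subgraph counts or densities in the original graph (Alon's counterexample), so a subsequential limit argument does not straightforwardly pin down $\sigma$. In short, your proposal is a reasonable roadmap and an accurate summary of why the problem is open, but it is not a proof, and should be presented as a discussion rather than an argument.
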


 Erd\H{o}s and Simonovits~\cite{ESi1} stated that $\alpha$ should furthermore be
 rational, in agreement with all the cases where $\alpha$ is known to exist.
  When $\alpha$ exists, we refer to $\alpha$ as the {\em exponent} of
  $\mathcal{F}$. The lack of good constructive lower bounds for
  $\ex{n}{\mathcal{F}}$ is a major impediment to determining exponents of
  families of bipartite graphs. It appears unlikely that the order of
  magnitude of $\ex{n}{F}$,
   let alone the existence of the exponent of $F$, will ever be determined for every bipartite graph
   $F$. Even for specific graphs such as the complete bipartite graph $F =
   K_{4,4}$, the cycle of length eight $F = C_8$, and the three-dimensional
   cube graph $F = Q_3$, the order of magnitude of $\ex{n}{F}$ is not known.

\subsection{A conjecture of Erd\H{o}s and Simonovits}

In this paper, we study a conjecture of Erd\H{o}s and
Simonovits~\cite{ESi1} concerning Tur\'{a}n numbers for families $\mathcal{F}$ containing bipartite graphs
and odd cycles. Let $\mathcal{C}_k$ be the family of all odd cycles of length at most
$k$, and let $\z{n}{\mathcal{F}}$ denote the maximum size of a bipartite $n$-vertex $\mathcal{F}$-free graph. The general theme is that extremal $\mathcal{F}$-free graphs should be near-bipartite if $\mathcal{F}$ contains a long enough odd cycle as well as bipartite graphs. Precisely, Erd\H{o}s and Simonovits made the following conjecture (Conjecture 3 in~\cite{ESi1}):

\begin{conj}\label{ESC}
  Given any finite family $\mathcal{F}$ of graphs, there exists $k$ such that as $n \rightarrow \infty$,
  \[\ex{n}{\mathcal{F} \cup \mathcal{C}_k} \sim \z{n}{\mathcal{F}}.\]
\end{conj}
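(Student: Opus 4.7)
The lower bound $\ex{n}{\mathcal{F}\cup\mathcal{C}_k} \geq \z{n}{\mathcal{F}}$ is immediate, since any bipartite $\mathcal{F}$-free graph is automatically odd-cycle-free. The substantive task is the matching upper bound, and my plan is to show that any $\mathcal{F}\cup\mathcal{C}_k$-free graph $G$ on $n$ vertices can be made bipartite by deleting $o\bigl(\z{n}{\mathcal{F}}\bigr)$ edges; the remaining subgraph is then bipartite and $\mathcal{F}$-free and so has at most $\z{n}{\mathcal{F}}$ edges.

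Since $\mathcal{F}$ contains a bipartite graph, $\z{n}{\mathcal{F}} = o(n^2)$, so $G$ is genuinely sparse, and I would apply Scott's sparse regularity lemma with density parameter $p \asymp \ex{n}{\mathcal{F}\cup\mathcal{C}_k}/\binom{n}{2}$. This produces a partition of $V(G)$ into a constant number $T = T(\eps,\delta)$ of parts together with a reduced graph $R$ whose edges correspond to $\eps$-regular pairs of relative density at least $\delta$. The central step is a sparse counting/embedding lemma: if $R$ contains an odd cycle $C_\ell$ with $\ell \leq k$, then one can lift it to an actual copy of $C_\ell$ in $G$, contradicting $\mathcal{C}_k$-freeness. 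Choosing $k > T$ then forces $R$ to contain no odd cycles at all, so $R$ is bipartite.

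Given a bipartition $A\cup B$ of the parts of $R$, the edges of $G$ split into those lying within a single part, those across irregular pairs, those across low-density regular pairs, and those across regular pairs corresponding to edges of $R$. The first three families contribute only $O(\eps n^2 + \delta p n^2)$ edges, which for a suitable choice of $\eps$ and $\delta$ (depending on the exponent of $\mathcal{F}$) is $o(\z{n}{\mathcal{F}})$. The remaining edges respect the bipartition inherited from $A\cup B$, so deleting the ``bad'' edges yields a bipartite $\mathcal{F}$-free subgraph, bounded above by $\z{n}{\mathcal{F}}$.

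The main obstacle is the sparse embedding lemma for odd cycles, which in the sparse regularity framework generally requires an upper-uniformity hypothesis on $G$ (no atypically dense small sets) that an arbitrary $\mathcal{F}\cup\mathcal{C}_k$-free graph need not satisfy. My plan for circumventing this is a preliminary cleaning step: identify a small set of ``dense patches'' — vertex subsets $S$ of size up to some threshold carrying density much larger than $p$ — and bound the total number of edges incident to them using the bipartite Tur\'an bound for $\mathcal{F}$ applied inside $S$. Provided $\z{m}{\mathcal{F}}$ grows sufficiently regularly in $m$, this edge loss is $o(\z{n}{\mathcal{F}})$, and after deletion the residual graph is upper-uniform and the embedding lemma applies. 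For the concrete families $K_{2,t}$ and $K_{3,3}$ featured in the abstract, the dense-patch estimate reduces to K\H{o}v\'ari--S\'os--Tur\'an-type bounds, which is precisely why the conjecture becomes tractable in those cases.
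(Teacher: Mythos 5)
The statement you are trying to prove is Conjecture~\ref{ESC}, which remains open in general; the paper proves it only under the additional hypothesis that $\mathcal{F}$ is $(\alpha,\beta)$-smooth. Your proposal quietly reintroduces that hypothesis when you write ``Provided $\text{z}(m,\mathcal{F})$ grows sufficiently regularly in $m$,'' which is precisely the content of smoothness, so at best you are outlining a proof of Theorem~\ref{thm:ExtCycle} rather than of the conjecture itself. The paper is explicit that the missing ingredient for the general conjecture is control of $\text{z}(m,n,\mathcal{F})$, not a cleverer regularity argument.

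More seriously, the central step of your plan is false. You assert that from an odd cycle $C_\ell$ in the sparse reduced graph one can ``lift it to an actual copy of $C_\ell$ in $G$,'' with the caveat that the embedding lemma needs an upper-uniformity hypothesis that you restore by deleting dense patches. But upper-uniformity is not enough: Alon's construction gives three sets forming a triple of $(\eps,p)$-regular pairs (all relative density bounded away from $0$, no atypically dense small subsets) containing no triangle at all, and the Krivelevich--Sudakov blow-up of a pseudorandom $C_k$-free graph is pseudorandom (hence upper-uniform) and has a complete cluster graph yet no $C_k$. No cleaning step of the type you describe can rule these out, because there is nothing to clean. This is exactly the obstruction the paper singles out, and it is why the actual argument does not use a counting/embedding lemma. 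Instead, the paper uses the smoothness hypothesis to derive an expansion lemma (Lemma~\ref{lem:expand}) for $\mathcal{F}$-free bipartite graphs of large minimum degree, locates a cluster $A$ lying in a positive \emph{density} of triangles (via a strengthened triangle stability lemma, Lemma~\ref{lem:TriStab}), builds a structure of $O(\ell_0)$ clusters around it, and runs a breadth-first expansion from a single vertex until the $\ell_0$-th neighborhoods are large enough (a constant fraction of a cluster) for $(\eps,p)$-regularity of a single pair $BB'$ to produce the final edge of an odd cycle. A single triangle (or a single odd cycle) in the cluster graph is never enough; the density of triangles is what allows the expansion to proceed independently of the number $t$ of clusters. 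Separately, your bound ``$O(\eps n^2 + \delta p n^2)$'' on the discarded edges is not automatically $o(\text{z}(n,\mathcal{F}))$; the paper needs the density transference lemma (Lemma~\ref{lem:degtransfer}), which again uses smoothness, to convert ``few cluster-graph edges removed'' into ``few $G$-edges removed.''

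The statement you are trying to prove is Conjecture~\ref{ESC}, which remains open in general; the paper proves it only under the additional hypothesis that $\mathcal{F}$ is $(\alpha,\beta)$-smooth. Your proposal quietly reintroduces that hypothesis when you write ``provided $\text{z}(m,\mathcal{F})$ grows sufficiently regularly in $m$,'' which is precisely the content of smoothness, so at best you are sketching Theorem~\ref{thm:ExtCycle} rather than the conjecture itself. The paper is explicit that the missing ingredient for the general conjecture is control of $\text{z}(m,n,\mathcal{F})$, not a cleverer regularity argument.

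More seriously, the central step of your plan is false. You assert that from an odd cycle $C_\ell$ in the sparse reduced graph one can ``lift it to an actual copy of $C_\ell$ in $G$,'' with the caveat that one first needs to restore upper-uniformity by deleting dense patches. But upper-uniformity does not rescue this step: Alon's construction gives a triple of $(\eps,p)$-regular pairs with relative density bounded away from zero and no atypically dense subsets, yet containing no triangle, and the Krivelevich--Sudakov blow-up of a pseudorandom $C_k$-free graph is pseudorandom (hence upper-uniform) and has a complete cluster graph while containing no $C_k$. No cleaning step of the sort you describe removes these obstructions, because there is nothing abnormally dense to remove. This is exactly the point the paper flags at the start of Section~3, and it is why the paper abandons the counting-lemma paradigm entirely. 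The actual argument uses smoothness to get an expansion lemma (Lemma~\ref{lem:expand}) for $\mathcal{F}$-free bipartite graphs with large minimum degree, requires not a single triangle but a cluster $A$ in a positive \emph{density} $\tau t^2$ of triangles (supplied by the strengthened stability lemma, Lemma~\ref{lem:TriStab}), assembles a bespoke cluster structure $A, D_1,\ldots,D_{\ell_0}, \mathcal{C}, B, B'$ (and primed copies), and runs a breadth-first expansion from one vertex of $A$ for $\ell_0+2$ steps until the neighborhoods occupy a constant fraction of $B$ and $B'$; only then does $(\eps,p)$-regularity of the single pair $(B,B')$ close off an odd cycle. The density of triangles is essential so that the expansion parameters do not degrade with the number of clusters $t$. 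Finally, your estimate ``$O(\eps n^2 + \delta p n^2)$'' on the discarded edges is not obviously $o(\text{z}(n,\mathcal{F}))$; the paper needs the density transference lemma (Lemma~\ref{lem:degtransfer}), which again rests on smoothness, to convert ``few edges of the cluster graph'' into ``few edges of $G$.''
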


This is certainly true from the Erd\H{o}s-Stone-Simonovits Theorem~\cite{ErdSimX} when $\mathcal{F}$ consists only of non-bipartite graphs, moreover it is shown that the extremal
$\mathcal{F} \cup \{C_k\}$-free graphs are exactly bipartite if $n$ is large enough~\cite{ErdSimX}. Conjecture~\ref{ESC} is proved in~\cite{KSV} for $\mathcal{F} =
\{C_4,C_6,\dots,C_{2\ell}\}$ when $\ell \in \{2,3,5\}$, and again the extremal graphs are shown in~\cite{KSV} to be bipartite graphs: the bipartite incidence graphs of rank two
geometries called generalized polygons~\cite{Tits}. The case $\mathcal{F} = \{C_4\}$ with $k=5$ was settled earlier by Erd\H{o}s and Simonovits~\cite{ESi1}.

\bigskip

In the present paper, we give a general approach which proves the conjecture for all families of bipartite graphs which are known to be {\em smooth}
(in a sense which is defined below) using a sparse version of
Szemer\'{e}di's Regularity Lemma. It is convenient for the next definition to denote by $\zz{m}{n}{\mathcal{F}}$ the maximum number of edges in an $m$ by $n$ bipartite
$\mathcal{F}$-free graph. In what follows, all asymptotic notation is with respect to $n$. Let $\alpha,\beta$ be reals with $ 1 \leq \beta <\alpha<2$.

\begin{definition}
A family $\mathcal{F}$ of bipartite graphs is $(\alpha,\beta)$-smooth if for
some $\rho \ge 0$ and every $m \leq n$,
\[ \zz{m}{n}{\mathcal{F}} = \rho mn^{\alpha - 1} + O(n^{\beta}).\]
\end{definition}

It is a consequence of the K\"{o}v\'ari-S\'{o}s-Tur\'{a}n Theorem~\cite{KSTbound} that $\alpha < 2$ for every family $\mathcal{F}$ of bipartite graphs.
We say $\mathcal{F}$ is {\em smooth} if $\mathcal{F}$ is $(\alpha,\beta)$-smooth for
some $\alpha>\beta$, and $(\alpha,\beta)$-smooth with {\em relative density} $\rho$
when we wish to refer to the limit density.
If $\mathcal{F}$ is smooth and consists of a single graph $F$, then we use the terminology {\em  $F$ is smooth}.
Conjecture \ref{firstconj} perhaps suggests that every finite family of graphs
is smooth --  smoothness should be seen as a bipartite
version of Conjecture \ref{firstconj} together with some control on the error
term implicit in the limit. In this paper, we study smooth families of graphs
and use smoothness as a broad framework under which Conjecture \ref{ESC} might
be proved. Single bipartite graphs which are known to be smooth include
$K_{2,t}$ for $t \geq 2$ and $K_{3,3}$, however, even the cycle $C_6$ of length
six is not known to be smooth.

\subsection{Main Result}

A family $\mathcal{G}$ of graphs is {\em near-bipartite} if
every $n$-vertex graph $G \in \mathcal{G}$ has a bipartite subgraph $H$ with $e(G) \sim e(H)$ as $n \rightarrow \infty$.
To prove Conjecture \ref{ESC} for a family $\mathcal{F}$, it is enough to show every extremal $\mathcal{F} \cup \{C_k\}$-free
family of graphs is near-bipartite for some odd $k$, for then $\ex{n}{\mathcal{F} \cup \{C_k\}} \sim \z{n}{\mathcal{F}}$.
Our main theorem proves this strengthening of the Erd\H{o}s-Simonovits conjecture for smooth families of bipartite graphs:

\begin{thm}\label{thm:ExtCycle}
Let $\mathcal{F}$ be an $(\alpha,\beta)$-smooth family with $2 > \alpha > \beta \geq
1$. There exists $k_0$ such that if $k\ge k_0 \in \mathbb N$ is odd, then
every extremal $\mathcal{F} \cup \{C_k\}$-free family of graphs is
near-bipartite.
\end{thm}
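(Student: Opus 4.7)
The plan is to apply Scott's sparse regularity lemma to an extremal $\mathcal{F} \cup \{C_k\}$-free graph $G$ at the natural density scale $p := n^{\alpha-2}$, extract a reduced graph $R$ on the clusters, and argue that $R$ must be bipartite. Since every bipartite $\mathcal{F}$-free graph is $\mathcal{F} \cup \{C_k\}$-free, extremality combined with smoothness gives $e(G) \geq \z{n}{\mathcal{F}} = \Theta(n^\alpha)$, and the matching upper bound $e(G) = O(n^\alpha)$ follows by the random bipartition trick and the smoothness estimate on $\zz{n/2}{n/2}{\mathcal{F}}$. Smoothness also delivers the upper-uniformity Scott's lemma requires: for any subset $S$ of linear size, $G[S]$ is $\mathcal{F}$-free, so $e(G[S]) = O(|S|^\alpha) = O(p|S|^2)$. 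The lemma then yields an equipartition $V_0 \cup V_1 \cup \cdots \cup V_t$ with $|V_0| \leq \eps n$, $t \leq T(\eps)$, common cluster size $m$, and all but an $\eps$-fraction of pairs $(V_i,V_j)$ being $\eps$-regular at scale $p$.

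Define the reduced graph $R$ on $[t]$ by placing $ij \in E(R)$ whenever $(V_i,V_j)$ is $\eps$-regular of density at least $\eta p$. A standard bookkeeping --- edges incident to $V_0$, edges in irregular pairs (bounded via upper-uniformity), and edges in regular pairs of density below $\eta p$ --- shows that all but $(\eps + \eta + o(1))\, e(G)$ edges of $G$ lie in high-density regular pairs, and so are recorded by edges of $R$.

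The heart of the argument is the claim that for $k_0 := T(\eps)+1$ and every odd $k \geq k_0$ the graph $R$ must be bipartite. Suppose instead that $R$ contains an odd cycle; as $|V(R)| = t \leq T(\eps) < k$, its length $\ell$ satisfies $\ell \leq t < k$. Because $k-\ell$ is even we can form a closed walk of length exactly $k$ in $R$ by traversing the $\ell$-cycle once and oscillating $(k-\ell)/2$ times along a fixed edge of that cycle. Standard sparse embedding techniques --- greedy vertex-by-vertex extension exploiting that typical vertices in an $\eps$-regular pair of density $\geq \eta p$ have $\sim \eta p m$ neighbours in the opposite cluster, together with a meet-in-the-middle step to close the cycle --- then realise this walk as a genuine copy of $C_k$ in $G$: at every step we only need to avoid the $O(k)$ previously used vertices and a small set of atypical vertices, both negligible against $\eta p m$. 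This contradicts $C_k$-freeness of $G$, so $R$ is bipartite.

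With $R$ bipartite, any $2$-colouring $A \sqcup B$ of $V(R)$ lifts to a bipartition of $V(G) \setminus V_0$ across which every surviving edge of $G$ runs. The bipartite subgraph $H$ on this cut satisfies $e(H) \geq (1 - \eps - \eta - o(1))\, e(G)$, and letting $\eps,\eta \to 0$ along a diagonal gives $e(H) \sim e(G)$, the near-bipartiteness we wanted. I expect the main technical obstacle to be the sparse embedding of $C_k$: no general counting lemma is available at this sparsity, so rather than counting cycles in $R$ and transferring, one must extend paths vertex-by-vertex, track the atypical vertex set inherited along the walk, and exploit the fact that the walk length is a constant independent of $n$ to push the closing step through.
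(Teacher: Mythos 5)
Your high-level setup (Scott's lemma at scale $p=n^{\alpha-2}$, a reduced graph $R$, and lifting a bipartition of $R$ back to $G$) is the right skeleton, and you correctly identify the embedding of $C_k$ as the technical heart. But the embedding step, as you describe it, is exactly the point at which the approach breaks, and the paper itself devotes a paragraph to explaining why.

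The claim that ``standard sparse embedding techniques --- greedy vertex-by-vertex extension exploiting that typical vertices in an $\eps$-regular pair of density $\geq\eta p$ have $\sim\eta p m$ neighbours --- realise this walk as a genuine copy of $C_k$'' is false at this sparsity, and false even in the simplest case $k=3$. Alon's construction~\cite{AlonPseudoBad} gives a triple of sparse-regular pairs of positive relative density containing no triangle at all; more generally (Krivelevich--Sudakov~\cite{KS}), if one takes any pseudorandom $C_k$-free graph and blows up each vertex into a small set, the resulting graph has a \emph{complete} cluster graph at the appropriate sparse scale yet still contains no $C_k$. Indeed the construction of Theorem~\ref{thm:k23} in this very paper is a triple of sparse-regular pairs whose cluster graph is a triangle but which is triangle-free. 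So there is no sparse counting/embedding lemma that turns a closed walk (or even a cycle) in $R$ into a cycle in $G$. The concrete failure point in your greedy argument is the closing step: after walking $k-1$ steps, each ``reachable'' neighbourhood has size only $\Theta(pm)=\Theta(n^{\alpha-1}/t)$, which is a vanishing fraction of a cluster of size $\Theta(n/t)$ since $\alpha<2$; $(\eps,p)$-regularity says nothing about sets this small, so one cannot force the two ends to be joined by an edge. It follows that your conclusion ``$R$ must be bipartite'' is simply not attainable, and in fact it is not true in general --- the paper only proves $R$ is \emph{close} to bipartite.

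The paper's workaround is to use the hypothesis on $\mathcal{F}$ more substantially. Smoothness of $\mathcal{F}$ gives quantitative expansion in the $\mathcal{F}$-free graph $G$ itself (Lemma~\ref{lem:expand}), not just in the regularity abstraction. Using this, the embedding lemma (Lemma~\ref{lem:embed}) finds a $C_k$ in $G$ under the stronger hypothesis that one cluster lies in $\tau t^2$ triangles of $R$; the positive density of triangles is used to build an $\ell_0$-step expansion structure whose widths are independent of $t$, which is essential because a single triangle only gives neighbourhoods of size $o(\eps n/t)$, too small to be controlled by regularity. A density-transference lemma (Lemma~\ref{lem:degtransfer}) then shows $e(R)\ge(\tfrac14-\tau)t^2$, and a strengthened triangle-stability lemma (Lemma~\ref{lem:TriStab}) gives the dichotomy: either some vertex of $R$ is in $\tau t^2$ triangles (contradiction, via the embedding lemma), or $R$ can be made bipartite by deleting $O(\tau^{1/4}t^2)$ edges. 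The near-bipartiteness of $G$ follows from the second alternative by another application of the transference lemma. As a side effect, the paper's $k_0$ is a small explicit function of $\alpha,\beta$ (via $\ell_0$), whereas your proposal ties $k_0$ to the tower-type bound $T(\eps)$ of the regularity lemma, which is both unnecessary and vastly larger. To repair your argument you would need to replace the greedy regularity-only embedding with an expansion argument in $G$ that genuinely uses the smoothness hypothesis; without it, the $C_k$-embedding claim has no support.
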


The theorem settles Conjecture~\ref{ESC} in a strong sense for smooth families,
namely that the omission of any single odd cycle is enough to force extremal
$\mathcal{F}$-free families of graphs to be nearly bipartite, whereas the condition of
Conjecture~\ref{ESC} is to avoid all short odd cycles. The closer the value of
$\beta$ to $\alpha$, however, the larger the lower bound on $k$, the length of
the shortest odd cycle we can find. The proof of Theorem \ref{thm:ExtCycle} is a
novel application of a recent sparse version of Szemer\'{e}di's Regularity Lemma
due to Scott~\cite{ScottSparse}. We note that (despite the use of regularity)
the value of $k_0$ we obtain is not large -- for all the smooth families we
know, it is less than $20$. We also ask whether the extremal $n$-vertex $\mathcal{F} \cup \{C_k\}$-free graphs in
Theorem \ref{thm:ExtCycle} are exactly bipartite when $n$ is large enough.
So far this is not known, even for $\mathcal{F} = \{K_{2,t}\}$ with $t > 2$, which is $(3/2,4/3)$-smooth.
Here we give some evidence that extremal $\mathcal{F} \cup \{C_k\}$-free graphs are indeed bipartite, by
proving the following Andr\'{a}sfai-Erd\H{o}s-S\'{o}s type theorem, which verifies this
provided the minimum degree of an extremal graph is not too small.

\begin{thm}\label{thm:AES}
Let $\mathcal{F}$ be an $(\alpha,\beta)$-smooth family with $2 > \alpha > \beta \geq 1$.
Then there exists $k_0$ such that for all odd $k\ge k_0$ and $n$ sufficiently large the following is true.
If $G$ is an $n$-vertex $\mathcal{F} \cup \{C_k\}$-free graph
with minimum degree at least $\rho\big(\tfrac{2n}{5}+o(n)\big)^{\alpha-1}$
then $G$ is bipartite.
\end{thm}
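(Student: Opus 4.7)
I would prove Theorem~\ref{thm:AES} in two stages: first, use the sparse-regularity machinery underlying Theorem~\ref{thm:ExtCycle} to establish that $G$ is near-bipartite; then upgrade this to exact bipartiteness via a classical Andr\'asfai--Erd\H{o}s--S\'os-style argument driven by the minimum-degree hypothesis.

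\textbf{Step 1: Near-bipartite structure.} The min-degree hypothesis gives $e(G) \geq \tfrac{n}{2}\rho\bigl(\tfrac{2n}{5}+o(n)\bigr)^{\alpha-1} = \Omega(n^\alpha)$, comparable to the smooth extremal bound. I would apply Scott's sparse regularity lemma to $G$ at scale $p = \Theta(n^{\alpha-2})$, and repeat the argument of Theorem~\ref{thm:ExtCycle}: the smoothness of $\mathcal{F}$ controls densities in regular pairs, and $C_k$-freeness together with a sparse embedding/counting lemma rules out a $C_k$ in the reduced graph $R$. Since the argument behind Theorem~\ref{thm:ExtCycle} uses extremality only indirectly, to guarantee sufficient density for regularity to bite, it adapts to our min-degree setting, forcing $R$ to be bipartite. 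Lifting yields a partition $V(G) = A \cup B$ with $e_G(A) + e_G(B) = o(n^\alpha)$.

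\textbf{Step 2: Exact bipartiteness.} Pass to a maximum cut, so every vertex has at least half of its edges crossing. Suppose for contradiction that $xy$ is an edge inside $A$. Since $e_G(A) + e_G(B) = o(n^\alpha)$, all but $o(n)$ vertices $v$ satisfy $\deg_B(v) \geq \deg(v) - o(n^{\alpha-1}) \geq (1-o(1))\rho(\tfrac{2n}{5})^{\alpha-1}$, and after deleting a small exceptional set we may assume this for all relevant vertices (in particular $x, y$). The goal is to find a path of length \emph{exactly} $k-1$ from $x$ to $y$ inside $G[A,B]$; together with the edge $xy$, this produces a $C_k$, contradicting $C_k$-freeness. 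BFS from $x$ in $G[A,B]$ has alternating levels in $B, A, B, A, \ldots$ whose sizes grow by a multiplicative factor $\Theta(n^{\alpha-1})$ per step (overlaps being controlled by $\mathcal{F}$-freeness), saturating $A$ or $B$ within $\lceil 1/(\alpha-1)\rceil$ steps --- well below $k$ for $k \ge k_0$. A greedy path extension then produces a path of the precise length $k-1$ ending at $y$, after which $xy$ closes the $C_k$.

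\textbf{Main obstacle and the constant $\tfrac{2}{5}$.} The technical heart is the parity-aware path construction in Step 2: producing a path of length \emph{exactly} $k-1$ (rather than some shorter even length, which is harmless since only $C_k$ is forbidden), while avoiding previously used vertices and the exceptional set. The sharpness of the $\tfrac{2}{5}$ constant echoes the classical Andr\'asfai--Erd\H{o}s--S\'os triangle-free threshold: closing the path at $y$ demands that the last BFS layer intersect $N_B(y)$, whose complement in $B$ has size at most $|B| - (1-o(1))\rho(\tfrac{2n}{5})^{\alpha-1}$; this is overcome precisely at the threshold $\delta(G) \geq \rho(\tfrac{2n}{5})^{\alpha-1}$ when $|A|, |B|$ are near $\tfrac{n}{2}$. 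Controlling BFS expansion in a sparse bipartite graph using only $\mathcal{F}$-freeness (rather than stronger pseudorandomness) is where the main work lies.
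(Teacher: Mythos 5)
There is a genuine gap, and it concerns the heart of the theorem: where the constant $\tfrac{2}{5}$ enters.

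Your Step~1 claims that the minimum-degree hypothesis lets you run the argument of Theorem~\ref{thm:ExtCycle} to get near-bipartiteness. But that argument (via Lemma~\ref{lem:TriStab}, a triangle stability theorem) needs the cluster graph $R$ to have edge density close to $1/2$. With only $\delta(G)\ge\rho(\tfrac{2n}{5})^{\alpha-1}$ you get $e(G)\ge\tfrac{n}{2}\rho(\tfrac{2n}{5})^{\alpha-1}$, which relative to the extremal value $\rho(n/2)^\alpha$ is a factor of only $(4/5)^{\alpha-1}<1$. Pushed through Lemma~\ref{lem:degtransfer} part~1, this yields $e(R)\ge(\tfrac{2}{5}-o(1))\tfrac{t^2}{2}$, i.e.\ cluster-graph density about $2/5$, which is far below the threshold where the stability theorem says anything. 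So Step~1 as stated does not give near-bipartiteness at all. What the paper does instead is transfer the \emph{minimum degree} (not just the total edge count) to the cluster graph via Lemma~\ref{lem:degtransfer} part~2, concluding that almost every cluster of $R$ has degree at least $(\tfrac{2}{5}+\gamma)t$, and then apply a \emph{supersaturated Andr\'asfai--Erd\H os--S\'os} theorem (Lemma~9 of~\cite{AESgen}) to the cluster graph: either some cluster lies in many triangles (whence a $C_k$ in $G$ via Lemma~\ref{lem:embed}), or $R$ has a near-bipartition respected by almost all edges at every vertex. That is where $\tfrac{2}{5}$ actually comes from.

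Your account of how $\tfrac{2}{5}$ arises in Step~2 is also not viable: since $\alpha<2$, $\rho(\tfrac{2n}{5})^{\alpha-1}=o(n)$, so $N_B(y)$ is a vanishingly small fraction of $B\approx n/2$; closing a BFS path at $y$ is not limited by whether $|B|-|N_B(y)|$ is small, and no threshold emerges at $\tfrac{2}{5}$ from that bookkeeping. Similarly, BFS levels cannot grow by a multiplicative factor $\Theta(n^{\alpha-1})$ per step when $n^{\alpha-1}=o(n)$ and the levels are bounded by $n$; the actual growth is governed by Lemma~\ref{lem:expand} and saturates only after $\ell_0$ steps, with $\ell_0$ depending on both $\alpha$ and $\beta$ (your $\lceil 1/(\alpha-1)\rceil$ is right only for $\beta=1$). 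Once the correct cluster-level near-bipartition is in hand, the paper's closing argument does resemble what you envisage — take a max cut, locate an offending edge $uv$ inside one side, grow disjoint expanding neighborhoods $N_i,N_i'$ via Lemma~\ref{lem:expand}, and then use a cluster $D$ adjacent to a cluster $C$ containing much of $N_\ell$ and also adjacent to many clusters containing much of $N'_\ell$, so that $(\eps,p)$-regularity of $(C,D)$ produces paths of every short length — but this relies on the cluster-graph structure (good clusters, degree control in $R'$) rather than on raw BFS in $G[A,B]$, and that structure is exactly what your Step~1 fails to supply.
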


\subsection{Complete Bipartite Graphs}

Theorem \ref{thm:ExtCycle} reduces the Erd\H{o}s-Simonovits Conjecture to showing that
a family of bipartite graphs is smooth. We shall now see that certain complete bipartite graphs are smooth.
F\"{u}redi~\cite{F2} showed that
if $m \le n$, and $s,t \in \mathbb N$, then
\[ \zz{m}{n}{K_{s,t}} \leq (t-s+1)^{1/s}mn^{1-1/s}+sm+sn^{2-2/s}\,.\]
If $s = 2$ or $t = s = 3$, this upper bound is matched up to a difference of $O(n^{2-2/s})$
 by constructions of Brown~\cite{B} and
F\"{u}redi~\cite{F2}, so in these cases, $K_{s,t}$ is $(2 - 1/s,2-2/s)$-smooth. It follows from Theorem \ref{thm:ExtCycle} that the Erd\H{o}s-Simonovits conjecture
holds for the
complete bipartite graphs
$K_{2,t}$ and $K_{3,3}$, and we obtain the following result:

\begin{thm}\label{thm:kst}
Let $k \geq 5$ be an odd integer. If $s = 2$ and $t \geq 2$, or $s = t = 3$, then
\[ \ex{n}{\{K_{s,t},C_k\}} \sim \z{n}{K_{s,t}} \sim (t - s + 1)^{1/s}(n/2)^{2 - 1/s}\]
and any extremal $\mathcal{F}$-free $C_k$-free graph is near-bipartite.
\end{thm}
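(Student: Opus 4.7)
The plan is to derive Theorem~\ref{thm:kst} from Theorem~\ref{thm:ExtCycle} by first verifying smoothness of the relevant complete bipartite graphs and then converting the near-bipartite conclusion into the stated asymptotic.

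\emph{Step 1: Smoothness.} For $s=2$ with $t \geq 2$ and for $s=t=3$, F\"uredi's upper bound
\[ \zz{m}{n}{K_{s,t}} \leq (t-s+1)^{1/s} m n^{1-1/s} + sm + s n^{2-2/s}, \]
is matched in leading order by the projective-plane and norm-graph constructions of Brown and F\"uredi, which realize the leading constant $(t-s+1)^{1/s}$ with error $O(n^{2-2/s})$. Hence $K_{s,t}$ is $(\alpha,\beta)$-smooth with $\alpha = 2-1/s$, $\beta = 2-2/s$, and relative density $\rho = (t-s+1)^{1/s}$; note that $2 > \alpha > \beta \geq 1$ in both cases.

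\emph{Step 2: The bipartite extremum.} Optimizing $\rho\, m(n-m)^{1-1/s}$ for $0 \leq m \leq n/2$, using the error bound from Step~1, shows that the balanced partition is asymptotically optimal, giving
\[ \z{n}{K_{s,t}} \sim \rho \cdot (n/2)^{2-1/s} = (t-s+1)^{1/s}(n/2)^{2-1/s}. \]
Since any bipartite graph is $C_k$-free for odd $k$, the extremal bipartite $K_{s,t}$-free graph immediately yields the lower bound $\z{n}{K_{s,t}} \leq \ex{n}{\{K_{s,t}, C_k\}}$.

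\emph{Step 3: Upper bound via Theorem~\ref{thm:ExtCycle}.} Applied to $\mathcal{F} = \{K_{s,t}\}$, Theorem~\ref{thm:ExtCycle} furnishes a threshold $k_0$ such that for every odd $k \geq k_0$ and every extremal $\{K_{s,t}, C_k\}$-free graph $G$ on $n$ vertices, some bipartite subgraph $H \subseteq G$ satisfies $e(H) \sim e(G)$. Since $H$ is bipartite and $K_{s,t}$-free, $e(H) \leq \z{n}{K_{s,t}}$, so $e(G) \leq (1+o(1))\z{n}{K_{s,t}}$. Combined with Step~2 this gives the asymptotic, and near-bipartiteness of extremal graphs is exactly the conclusion of Theorem~\ref{thm:ExtCycle}.

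\emph{Main obstacle.} The only delicate point is that Theorem~\ref{thm:kst} claims the result for every odd $k \geq 5$, whereas Theorem~\ref{thm:ExtCycle} supplies merely a sufficiently large $k_0$ depending on the smoothness parameters. To reach down to $k=5$ one must trace the proof of Theorem~\ref{thm:ExtCycle} with the explicit values $\alpha = 2-1/s$ and $\beta = 2-2/s$ and verify that $k_0 \leq 5$ is admissible for these specific families, in line with the remark that $k_0 < 20$ for every smooth family presently known. This quantitative sharpening is the only step that is not a direct invocation of earlier results.
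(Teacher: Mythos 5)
Your Steps 1--3 match the paper's high-level structure: verify $(2-1/s,2-2/s)$-smoothness via F\"uredi's upper bound and the Brown/F\"uredi constructions, optimize the bipartite formula to get $\z{n}{K_{s,t}} \sim (t-s+1)^{1/s}(n/2)^{2-1/s}$, and then invoke Theorem~\ref{thm:ExtCycle} for near-bipartiteness and the matching upper bound. Step~2 is correct: since the unconstrained maximizer $m=n/(2-1/s)$ exceeds $n/2$, the balanced $m=n/2$ is optimal over the feasible range.

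The gap is in your ``Main obstacle'' paragraph. You correctly identify that Theorem~\ref{thm:ExtCycle} only supplies some $k_0$, not $k_0=5$, but the fix you propose --- tracing through the proof with the explicit values $\alpha=2-1/s$, $\beta=2-2/s$ to check that $k_0\le 5$ --- cannot work. The proof of Theorem~\ref{thm:ExtCycle} rests on Lemma~\ref{lem:embed}, which produces odd cycles only of length at least $2\ell_0+5$ where $\ell_0\ge 1$ always (as the paper's Remark following Lemma~\ref{lem:embed} notes, this lemma cannot produce odd cycles shorter than seven). Concretely, for $K_{2,t}$ one has $\alpha=3/2$, $\beta=1$, so $\ell_0=\lfloor 1/(\alpha-1)\rfloor=2$ and $k_0=9$; for $K_{3,3}$ one has $\alpha=5/3$, $\beta=4/3$, and again $\ell_0=2$, $k_0=9$. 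No amount of numerical tracing brings the generic embedding down to $k=5$. What the paper actually does is replace Lemma~\ref{lem:embed} by the specialized Lemma~\ref{lem:kstembed}, whose proof uses a one-step expansion estimate (Lemma~\ref{lem:kstexpand}) that exploits the fact that the set being expanded is contained in the neighborhood of a single vertex $v$ of a $K_{s,t}$-free graph: many vertices of $X\subset N(v)$ sharing an $(s-1)$-set in $Y$ would create a $K_{s,t}$ through $v$. This eliminates the iterated path $D_1,\dots,D_{\ell_0}$ and lets the construction reach $B$ in two neighborhood steps, giving cycles down to length $5$. Your proposal is missing this new embedding lemma entirely, and without it the claimed $k\ge 5$ range is not justified.
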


This theorem is a direct consequence of Theorem \ref{thm:ExtCycle} and the smoothness
of $K_{s,t}$ for $s = 2$ and $s = t = 3$, except that the assumption $k \geq 5$ is weaker than the assumption on $k$
in Theorem \ref{thm:ExtCycle}
. For $t > (s - 1)!$, the order of magnitude  of
$\ex{n}{K_{s,t}}$ is known to be $n^{2 - 1/s}$ according to a construction of
Alon, R\'{o}nyai and Szab\'o~\cite{ARS}, superseding an earlier construction of
Koll\'{a}r, R\'{o}nyai and Szab\'o~\cite{KRS}. However, the asymptotic behavior
of
$\zz{m}{n}{K_{s,t}}$ is not known for any $s \geq 3$ and $t
\geq 4$, and the order of magnitude of $\ex{n}{K_{s,t}}$ is not known in
general.
In the event that $K_{s,t}$ is $(2 - 1/s,\beta)$-smooth for any given $s \leq
t$ and $\beta<2-1/s$, Theorem \ref{thm:kst} would extend to prove the
Erd\H{o}s-Simonovits conjecture for $K_{s,t}$ with the condition $k\ge 5$.

\subsection{Triangles and an old conjecture of Erd\H{o}s}

A natural question is whether it is also possible
that Theorems \ref{thm:ExtCycle} and \ref{thm:kst} could hold for shorter odd cycles, in particular for triangles -- the case $k = 3$ in both theorems.
In the case $s=t=2$, it is an old conjecture of Erd\H{o}s~\cite{ErdosA,ErdosB} that any extremal $n$-vertex graph of girth five
has $\frac{1}{2\sqrt{2}}n^{3/2} + o(n^{3/2})$ edges -- in particular the bipartite incidence graph of
a projective plane with $n$ points has that many edges. However, we are able to show that for
all odd $t\ge 3$, the extremal function $\ex{n}{\{K_{2,t},C_3\}}$ is not asymptotic to $\z{n}{K_{2,t}}$.

\begin{thm}\label{thm:k23} For each $t\in\mathbb{N}$,
  \[\liminf_{n\rightarrow\infty}\frac{\ex{n}{\{C_3,K_{2,2t+1}\}}}{\z{n}{K_{2,2t+1}}}
  \ge\frac{t+1}{\sqrt{t(t+2)}}\,.\]
\end{thm}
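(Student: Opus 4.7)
The statement is a construction, so my plan is to exhibit, for every $t\in\mathbb N$ and infinitely many $n$, a triangle-free $K_{2,2t+1}$-free graph on $n$ vertices with at least $\tfrac{t+1}{2\sqrt{t+2}}\,n^{3/2}+o(n^{3/2})$ edges. The target density is fixed by Theorem~\ref{thm:kst}, which gives $\z{n}{K_{2,2t+1}}\sim\sqrt{2t}(n/2)^{3/2}=(\sqrt{t}/2)\,n^{3/2}$; multiplying by the claimed ratio $\tfrac{t+1}{\sqrt{t(t+2)}}$ yields the above edge bound.

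The plan is to give an explicit algebraic construction. I would fix a large prime power $q$, take the vertex set $V\subseteq\mathbb{F}_q^d$ for some small $d$ with $|V|\sim n$, and declare $u\sim v$ via a symmetric irreflexive polynomial relation $R(u,v)=0$. The relation is chosen so that its automorphism group acts transitively on $V$ and the resulting graph is regular of degree $\sim\tfrac{t+1}{\sqrt{t+2}}\sqrt{n}$. The parameter $t$ enters the construction through a tunable datum of $R$ (for instance the degree of a norm or trace map $\mathbb{F}_{q^{t+1}}\to\mathbb{F}_q$) which controls the generic number of common neighbours of two fixed vertices.

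Verification then splits into three standard parts. Triangle-freeness: a would-be triangle $u_1u_2u_3$ corresponds to the closed system $R(u_i,u_j)=0$ for $i<j$, and one shows that its solution set contains only degenerate points where some $u_i=u_j$, so $G$ is $C_3$-free. $K_{2,2t+1}$-freeness: for fixed distinct $u,v$ a common neighbour $w$ satisfies $R(u,w)=R(v,w)=0$, and eliminating one coordinate reduces this to a univariate equation of degree at most $2t$, so the two vertices share at most $2t$ common neighbours. Edge count: by vertex transitivity $e(G)=\tfrac12 d|V|+O(q)$, and dividing by $\z{n}{K_{2,2t+1}}$ and letting $q\to\infty$ along the appropriate subsequence yields exactly the ratio $\tfrac{t+1}{\sqrt{t(t+2)}}$ in the limit.

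The main obstacle will be pinning down the relation $R$. Because $\tfrac{t+1}{\sqrt{t(t+2)}}=\sqrt{1+\tfrac{1}{t(t+2)}}$ is only slightly larger than $1$, the construction must beat the bipartite extremum by a precise small margin, which forces the common-neighbour elimination polynomial to have degree exactly $2t$ generically --- any looseness loses the bound. Simultaneously enforcing triangle-freeness (a low-dimensional variety must be empty over $\mathbb{F}_q$) together with a symmetric irreflexive $R$ is the delicate point; the restriction in the abstract to odd $2t+1$ signals that the construction uses a sign condition on a Frobenius or involution which fails for the even parameters, so $R$ must be tailored to the arithmetic of $\mathbb F_q$ in a way compatible with both constraints at once.
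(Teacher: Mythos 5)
Your proposal identifies the correct shape of the argument (an explicit algebraic construction over a finite field, verified against triangle-freeness, $K_{2,2t+1}$-freeness, and edge count), and your consistency check on the target degree $\sim\frac{t+1}{\sqrt{t+2}}\sqrt n$ is right. But you never actually produce the graph: the relation $R$ is left unspecified, and you acknowledge this yourself. That is the entire content of the theorem, so as written the proposal is a plan rather than a proof. Worse, the framework you sketch --- a single vertex-transitive graph on $V\subseteq\mathbb F_q^d$ cut out by one symmetric polynomial relation, with $K_{2,2t+1}$-freeness coming from a degree-$2t$ elimination polynomial --- does not match the construction the paper uses, and it is not clear such a vertex-transitive construction exists. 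The paper instead builds a $(t+2)$-partite graph $G_{q,t}$ whose parts $A_1,\dots,A_{t+2}$ are copies of $\mathbb F_q^2$, joining $(x_1,x_2)\in A_i$ to $(y_1,y_2)\in A_j$ (for $i<j$) when $(x_1,x_2)-(y_1,y_2)=m_{i,j}(c,c^2)$ for some $c\in\mathbb F_q^*$, where the multipliers $m_{i,j}$ are chosen (greedily, using Weil's bound) so that $-m_{i,j}m_{j,k}m_{k,i}(m_{i,j}+m_{j,k}+m_{k,i})$ is a quadratic non-residue for all distinct $i,j,k$ and $m_{j,k}\ne -m_{k,i}$. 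This graph is regular of degree $(t+1)(q-1)$ on $n=(t+2)q^2$ vertices, giving exactly the required $\frac{t+1}{2\sqrt{t+2}}n^{3/2}$ edges, but it is not vertex-transitive (different part-pairs use different multipliers), so a single symmetric $R$ does not describe it.

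Two of your heuristics also misfire. The $K_{2,2t+1}$-freeness in the paper does not come from bounding the degree of an elimination polynomial; it comes from the fact that each bipartite pair $(A_i,A_j)$ is $C_4$-free, so two vertices in the same part have at most one common neighbour per other part, and two vertices $x\in A_i$, $y\in A_j$ in different parts have at most two common neighbours in each third part $A_k$ (via a square-root sign ambiguity), for a total of at most $2t$ --- a pigeonhole argument, not a degree bound. And the odd parameter $2t+1$ is not due to a sign condition on Frobenius; it is simply that the construction produces at most $2$ common neighbours per third part for cross-part pairs, so $2t$ is the natural ceiling and $K_{2,2t+1}$ is what you can forbid. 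The quadratic-residue condition is used for triangle-freeness, not for the parity of the forbidden $K_{2,s}$. Without an explicit $R$ and a verification that these two constraints can be met simultaneously, the proposal has a genuine gap at its core.
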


In particular the ratio is $\frac{2}{\sqrt{3}} + o(1)$ for $K_{2,3}$. In light of this, we make the opposite conjecture to Erd\H{o}s:

\begin{conj}
\[ \liminf_{n \rightarrow \infty}
\frac{\ex{n}{\{C_3,C_4\}}}{\z{n}{C_4}} > 1\,.\]
\end{conj}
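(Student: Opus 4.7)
The strategy is to exhibit, for every sufficiently large $n$, a $\{C_3,C_4\}$-free graph on $n$ vertices with more than $(1+\delta)\z{n}{C_4}$ edges for some absolute constant $\delta>0$. Since $\z{n}{C_4}\sim\frac{1}{2\sqrt 2}n^{3/2}$, realized by the bipartite incidence graph of a projective plane, and since the K\H{o}v\'ari--S\'os--Tur\'an bound gives $\ex{n}{\{C_3,C_4\}}\le\ex{n}{C_4}\le(\tfrac12+o(1))n^{3/2}$, the parameter $\delta$ must satisfy $\delta\le\sqrt 2-1+o(1)$; the goal is to construct girth-$5$ graphs strictly denser than $\frac{1}{2\sqrt 2}n^{3/2}$.

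The natural first candidate is the Erd\H{o}s--R\'enyi polarity graph $ER_q$ of $PG(2,q)$ for an odd prime power $q$. It is $C_4$-free on $n=q^2+q+1$ vertices with $e(ER_q)=\tfrac{1}{2}q(q+1)^2\sim\tfrac12 n^{3/2}$ edges. A short computation using that the polar of a non-absolute point is a secant or external line to the polarity conic (and the polar of an absolute point is its tangent) shows that triangles in $ER_q$ are pairwise edge-disjoint, total to $T(ER_q)=\tfrac{1}{6}q(q^2-1)$, and use only edges between non-absolute vertices. Hence the maximum $\{C_3,C_4\}$-free subgraph of $ER_q$ has $e(ER_q)-T(ER_q)=\tfrac{1}{3}q(q+1)(q+2)\sim\tfrac13 n^{3/2}$ edges. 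Since $\tfrac13<\tfrac{1}{2\sqrt 2}$, \emph{crude triangle deletion does not beat the bipartite bound}. The plan is therefore to \emph{redirect} rather than delete: for each triangle $xyz$ of $ER_q$, replace one of the three edges by a new edge from one endpoint to an algebraically prescribed non-neighbour, chosen so that no two redirections together create a $C_4$. The $\sim\tfrac12 n^2$ non-edges of $ER_q$ leave ample room; one might design the rule using the stabilizer of the polarity inside $PGL_3(q)$, or instead start from a Hermitian polarity on $PG(2,q^2)$ (whose absolute set is a Hermitian unital) and exploit its richer automorphism group to suppress triangles.

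A second line is to adapt the construction behind Theorem~\ref{thm:k23}, which handles $\{C_3,K_{2,2t+1}\}$. That construction crucially exploits the slack of up to $2t$ common neighbours allowed by $K_{2,2t+1}$-freeness; replacing $K_{2,2t+1}$ by $C_4=K_{2,2}$ eliminates the slack entirely, so direct translation fails. One could instead augment the bipartite incidence graph of $PG(2,q)$ by a sparse set of non-bipartite edges -- for instance a carefully chosen collection of odd cycles or a twisted pairing across the two sides -- designed so that neither $C_3$ nor $C_4$ is created and so that the resulting edge count strictly exceeds the bipartite extremal. Extension from the sparse sequence $n=q^2+q+1$ to all sufficiently large $n$ is handled by padding with isolated vertices, using the density of prime powers in short intervals (e.g.~Baker--Harman--Pintz) to ensure a suitable $q$ with $n-(q^2+q+1)=o(n)$, so that no ratio is lost in the liminf.

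The main obstacle is precisely that \emph{no construction of a girth-$5$ graph with asymptotic density exceeding $\tfrac{1}{2\sqrt 2}n^{3/2}$ is currently known}; producing one is essentially the content of the conjecture. The polarity graph reaches the larger density $\tfrac12 n^{3/2}$, but its edge-disjoint triangles are so numerous that any triangle-free subgraph loses a $\tfrac13$-fraction of its edges and drops below the bipartite bound. Any proof therefore appears to require a fundamentally new algebraic or combinatorial construction -- possibly from generalized quadrangles, Hermitian unitals, or elliptic quadrics in $PG(3,q)$ -- and verifying simultaneously the girth condition and the edge count is the central challenge. The smoothness--regularity framework of the paper provides no obvious lower-bound mechanism; the problem is one of \emph{construction}, not of applying existing machinery.
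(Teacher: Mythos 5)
This statement is labelled as a conjecture in the paper, and the paper offers no proof of it --- it is posed as an open problem (the ``opposite'' of Erd\H{o}s's old conjecture that extremal girth-five graphs have $\tfrac{1}{2\sqrt2}n^{3/2}+o(n^{3/2})$ edges), motivated only by the analogous lower bound the authors do prove for $\{C_3,K_{2,2t+1}\}$ in Theorem~\ref{thm:k23}. So there is no ``paper proof'' to compare against, and your submission, which candidly concludes that a fundamentally new construction would be required, is an accurate assessment of the state of the problem rather than a proof. Your supporting analysis is essentially sound: the normalization $\z{n}{C_4}\sim\frac{1}{2\sqrt2}n^{3/2}$ is correct, the triangles of the Erd\H{o}s--R\'enyi polarity graph are indeed pairwise edge-disjoint (each edge lies in at most one triangle because any two vertices have at most one common neighbour), and your count shows that deleting one edge per triangle leaves only $\sim\frac13 n^{3/2}<\frac{1}{2\sqrt2}n^{3/2}$ edges, so the polarity graph cannot be trimmed into a witness. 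You are also right that the construction of Theorem~\ref{thm:k23} degenerates at $K_{2,2}$: its triangle-freeness and $K_{2,2t+1}$-freeness both rest on the quadratic-residue argument tolerating up to $2t$ common neighbours, and the resulting density gain $\frac{t+1}{\sqrt{t(t+2)}}$ has no meaningful specialization to $C_4$. The one point to flag is that, as submitted, this is not a proof of the statement and cannot be graded as one; if you wished to extract something provable from your discussion, it would be the negative result that no subgraph of $ER_q$ suffices, which is a far weaker claim than the conjecture itself.
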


In~\cite{KSV}, it is shown that Conjecture~\ref{ESC} does not extend to $k = 3$ when
$\mathcal{F} = \{C_4,C_6\}$ and does not extend to $k = 3, 5$ when $\mathcal{F} = \{C_4,C_6,C_8,C_{10}\}$.
Determining the smallest value of $k = k(\mathcal{F})$ for which the conjecture could hold is an interesting open problem.

\section{Sparse regularity}

In this section we describe a version of Szemer\'edi's Regularity Lemma which
can be usefully applied to extremal $\mathcal{F}$-free graphs.

\subsection{Szemer\'{e}di's Regularity Lemma}

Given an $n$-vertex graph $G$ and disjoint vertex subsets $X$ and $Y$, the
\emph{density} of the pair $(X,Y)$ is $d(X,Y)=e(X,Y)/(|X||Y|)$. We say that
$(X,Y)$ is \emph{$\eps$-regular} if we have $|d(X',Y')-d(X,Y)|\le\eps$ for every
pair $X'\subset X$ and $Y'\subset Y$ with $|X'|\ge\eps |X|$ and $|Y'|\ge\eps
|Y|$. We say a partition $V(G)=V_0\sqcup\cdots\sqcup V_k$ is
\emph{$\eps$-regular} if $|V_0|\le\eps n$, $|V_1|=\cdots=|V_k|$, and all but at
most $\eps k^2$ of the pairs $(V_i,V_j)$ with $i,j\in[k]$ are $\eps$-regular.
The celebrated Szemer\'edi Regularity Lemma~\cite{SzReg} states that for every
$\eps>0$ there exists $K$ such that every graph $G$ possesses an $\eps$-regular
partition whose number of parts is between $\eps^{-1}$ and $K$.
A key definition in the proof is the \emph{energy function} of a partition $\mathcal{P}$ of $V(G)$, which is
\begin{equation}\label{eq:DefEnergy}
  \mathcal{E}(G,\mathcal{P})=\sum_{X,Y\in\mathcal{P}}\frac{|X||Y|}{n^2}d(X,Y)^2.
\end{equation}

The proof consists of showing that either $\mathcal{P}$ is an $\eps$-regular partition of $G$, or there is a refinement of $\mathcal{P}$ with at most $2^{|\mathcal{P}|}$ parts
whose energy is increased by at least $\eps^5$ over that of
$\mathcal{P}$. Since the energy of any partition is bounded by $1$, the result
follows. Unfortunately, Szemer\'edi's Regularity Lemma in the above framework is useless for our purposes:
$K_{s,t}$-free graphs have $o(n^2)$ edges, and hence the densities of all pairs
in the partition will be very close to zero.

\subsection{Versions of the regularity lemma for sparse graphs}

For sparse graphs, Kohayakawa~\cite{YoshiSparse} and independently R\"odl
(unpublished) defined the notion of relative density and
$(\eps,p)$-regular partitions where $p \in [0,1]$. Here, $G$ is a graph and $X$ and $Y$ are disjoint non-empty subsets of vertices of $G$.

\begin{definition}
Given $0<p\le 1$, the \emph{relative density} of the pair
$(X,Y)$ is $d_p(X,Y)=d(X,Y)/p$. The pair $(X,Y)$ is \emph{$(\eps,p)$-regular}
if $|d_p(X',Y')-d_p(X,Y)|\le\eps$ for every
pair $X'\subset X$ and $Y'\subset Y$ with $|X'|\ge\eps |X|$ and $|Y'|\ge \eps |Y|$. A partition $V(G)=V_0\sqcup\cdots\sqcup V_k$ is called
\emph{$(\eps,p)$-regular} if $|V_0|\le\eps n$, $|V_1|=\cdots=|V_k|$, and all but at
most $\eps k^2$ of the pairs $(V_i,V_j)$ with $i,j \in [k]$ are
$(\eps,p)$-regular.
\end{definition}

Kohayakawa~\cite{YoshiSparse} and R\"odl proved that for any
function $p=p(n)$, $\eps>0$ and $C$ there exist $K$ and $\mu>0$ such that if $G$
is an $n$-vertex graph for which the relative density $d_p(X,Y)$ is at most $C$
whenever $X$ and $Y$ are disjoint vertex sets of size at least $\mu n$, then $G$
has an $(\eps,p)$-regular partition whose number of parts is between $\eps^{-1}$ and  $K$.
For this proof, the energy function
\begin{equation}\label{eq:DefpEnergy}
  \mathcal{E}_p(G,\mathcal{P})=\sum_{X,Y\in\mathcal{P}}\frac{|X||Y|}{n^2}d_p(X,Y)^2
\end{equation}
is defined, and is observed to be bounded by $C$ whenever $\mathcal{P}$ contains
no small parts; the remainder of the proof is virtually identical to the original proof of Szemer\'edi.
Unfortunately, $K_{s,t}$-free graphs with $\Theta(n^{2-1/s})$ edges and
$p(n)=n^{-1/s}$ need not satisfy the condition given by Kohayakawa and R\"odl.
For complete bipartite graphs this is not a major issue: one can use instead
the following bound on the energy function of a partition, from which the
desired regularity lemma follows.

\begin{lem}\label{lem:EnergyBound}
  Let $p=n^{-1/s}$. Let $G$ be a $K_{s,t}$-free graph and $\mathcal{P}$ a
  partition of $V(G)$ with no parts of size smaller than $2sn^{1/s}$. Then we
  have $\mathcal{E}_p(G,\mathcal{P})\le 2^st+1$.
\end{lem}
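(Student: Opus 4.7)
The plan is to reduce $\mathcal{E}_p$ to a sum of squared partial degrees $\sum_x d_Y(x)^2$ (one per part $Y$) and to control each by Hölder's inequality combined with a Kővári--Sós--Turán-type double count. Since $p=n^{-1/s}$ gives $n^2p^2=n^{2-2/s}$, one has $\mathcal{E}_p(G,\mathcal{P})=n^{2/s-2}\sum_{X,Y}e(X,Y)^2/(|X||Y|)$. Writing $d_Y(x):=|N(x)\cap Y|$ so that $e(X,Y)=\sum_{x\in X}d_Y(x)$, Cauchy--Schwarz yields $e(X,Y)^2\le|X|\sum_{x\in X}d_Y(x)^2$, i.e.\ $e(X,Y)^2/(|X||Y|)\le|Y|^{-1}\sum_{x\in X}d_Y(x)^2$. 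Summing over $X$ (each vertex of $G$ lies in a unique part) collapses the inner sum to one over all of $V(G)$, producing
\[
\sum_{X,Y\in\mathcal{P}}\frac{e(X,Y)^2}{|X||Y|}\;\le\;\sum_{Y\in\mathcal{P}}\frac{1}{|Y|}\sum_{x\in V(G)}d_Y(x)^2.
\]

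For each fixed $Y$, Hölder (equivalently the power-mean inequality) gives $\sum_xd_Y(x)^2\le n^{1-2/s}\bigl(\sum_xd_Y(x)^s\bigr)^{2/s}$, and the $K_{s,t}$-freeness of $G$ applied to $s$-subsets of $Y$ yields $\sum_{x\in V(G)}\binom{d_Y(x)}{s}\le(t-1)\binom{|Y|}{s}$ by the standard double count: each $s$-set $S\subseteq Y$ has $|N(S)\setminus S|\le t-1$, else $K_{s,t}$ appears. To pass from $\binom{d}{s}$ to $d^s$, observe that $d^s\le 2^ss!\binom{d}{s}$ whenever $d\ge 2s$ (each factor of $d(d-1)\cdots(d-s+1)$ is then at least $d/2$), so the contribution to $\sum_xd_Y(x)^s$ from vertices with $d_Y(x)\ge 2s$ is at most $2^s(t-1)|Y|^s$; the low-degree contribution is at most $n(2s)^s$, which the hypothesis $|Y|\ge 2sn^{1/s}$ shows is at most $|Y|^s$. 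Combined, $\sum_xd_Y(x)^s\le(2^s(t-1)+1)|Y|^s$ and hence $\sum_xd_Y(x)^2\le(2^s(t-1)+1)^{2/s}|Y|^2n^{1-2/s}$.

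Dividing by $|Y|$ and summing over $Y$ (with $\sum_Y|Y|=n$) bounds $\sum_{X,Y}e(X,Y)^2/(|X||Y|)$ by $(2^s(t-1)+1)^{2/s}n^{2-2/s}$, and dividing by $n^{2-2/s}$ gives $\mathcal{E}_p\le(2^s(t-1)+1)^{2/s}$. Since $2/s\le 1$ for $s\ge 2$ and the quantity being exponentiated is at least $1$, this is at most $2^s(t-1)+1\le 2^st+1$, as required. The only delicate step is the absorption of the low-degree error $n(2s)^s$ by the main term $(t-1)|Y|^s$ in the conversion from $\binom{d}{s}$ to $d^s$; this is precisely what the minimum-part-size hypothesis $|Y|\ge 2sn^{1/s}$ is tuned to achieve, and every other step is a routine Cauchy--Schwarz/Hölder manipulation.
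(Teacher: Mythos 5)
Your proof is correct and reaches the stated bound, but it organizes the convexity steps differently from the paper. Both arguments ride on the same K\H{o}v\'ari--S\'os--Tur\'an double count (an $s$-subset of a fixed part has at most $t-1$ common neighbors) and both invoke the minimum-part-size hypothesis to absorb sub-leading error terms, so the combinatorial heart is shared. The paper fixes a part $X$, groups common neighbors $v$ by the part $Y$ containing them, and applies Jensen to $\binom{\cdot}{s}$ within each $Y$, producing $\sum_{Y}|Y|\binom{d(X,Y)|X|}{s}\le t\binom{|X|}{s}$; it then uses $|X|\ge 2sn^{1/s}$ to compare the shifted binomial $\binom{d_p(X,Y)n^{-1/s}|X|}{s}$ to the pure power $d_p(X,Y)^s|X|^sn^{-1}/(2^ss!)$, and closes with the elementary $x^2\le\max(x^s,1)$. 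You instead fix the target part $Y$, apply Cauchy--Schwarz to reduce $e(X,Y)^2/|X|$ to $\sum_{x\in X}d_Y(x)^2$, pool over $X$ into a global second moment $\sum_{x\in V(G)}d_Y(x)^2$, bridge to the $s$th moment via H\"older, apply the double count to $\sum_x d_Y(x)^s$, and absorb the low-degree contribution $n(2s)^s$ into the main term $|Y|^s$ using $|Y|\ge 2sn^{1/s}$. Your route is a bit more modular -- it isolates the clean moment estimate $\sum_x d_Y(x)^s\le(2^s(t-1)+1)|Y|^s$, after which everything is routine averaging, and it even delivers the slightly sharper intermediate bound $\mathcal{E}_p\le(2^s(t-1)+1)^{2/s}$ before rounding up. The paper's version sidesteps the global H\"older step by working conditionally on a fixed $X$ throughout, which keeps the argument a touch shorter. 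Both assume $s\ge 2$ in the final comparison of powers, which is the regime in which the lemma is applied.
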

\begin{proof}
  Let $G$ be an $n$-vertex $K_{s,t}$-free graph, and $\mathcal{P}$ a partition
  of $V(G)$ with no parts containing fewer than $2sn^{1/s}$ vertices. Given
 part $X\in\mathcal{P}$ we have \[\frac{t|X|^s}{s!}\ge
t\binom{|X|}{s}\ge\sum_{v\in G}
    \binom{d_X(v)}{s}\ge\sum_{Y\in\mathcal{P}}|Y|\binom{d(X,Y)|X|}{s}
    =\sum_{Y\in\mathcal{P}}|Y|\binom{d_p(X,Y)n^{-1/s}|X|}{s}\, ,\]
  using that $G$ is $K_{s,t}$-free in the second step and Jensen's inequality at the third step. Observe that since
  $|X|>2sn^{1/s}$, if $d_p(X,Y)\ge 1$ then $d_p(X,Y)n^{-1/s}|X|-s\ge
  d_p(X,Y)n^{-1/s}|X|/2$. Ignoring terms with $d_p(X,Y)<1$, we have
  \[\frac{t|X|^s}{s!}\ge \sum_{\substack{Y\in\mathcal{P}\\d_p(X,Y)\ge
  1}}
|Y|\binom{d_p(X,Y)n^{-1/s}|X|}{s}\ge\sum_{\substack{Y\in\mathcal{P}\\d_p(X,Y)\ge
  1}}\frac{|Y|}{n}\frac{d_p(X,Y)^s|X|^s}{2^ss!}\]
  and thus
  \[
  \sum_{\substack{Y\in\mathcal{P}\\d_p(X,Y)\ge 1}}\frac{|Y|}{n}d_p(X,Y)^s\le
  2^st\,.\]

  Finally, since $x^2\le x^s$ for all $x\ge 1$ and $x^2<1$ for all $x<1$, we
  have
   \begin{align*}
\mathcal{E}_p(G,\mathcal{P})&=\sum_{X,Y\in\mathcal{P}}\frac{|X||Y|}{n^2}d_p(X,
Y)^2
\le \sum_{X\in\mathcal{P}}\frac{|X|}{n}\Big(\sum_{\substack{Y\in\mathcal{P}\\d_p(X,
Y)\ge
1}}\frac{|Y|}{n}d_p(X,Y)^s+\sum_{\substack{Y\in\mathcal{P}\\d_p(X,Y)<1}}\frac{
|Y|}{n}\Big)\\
    &\le \sum_{X\in\mathcal{P}}\frac{|X|}{n}\big(2^st+1)=2^st+1
  \end{align*}
  as required.
\end{proof}

However, Scott~\cite{ScottSparse} has recently shown that (perhaps surprisingly)
we need assume no conditions at
all on $G$ to obtain a sparse-regular partition, and it is convenient to simply
use this result to prove our main theorems. To the best of our knowledge, this is the first application of Scott's regularity
lemma.

\begin{lem}\label{lem:SparseReg} {\bf (Sparse Regularity Lemma)}
  Given $\eps>0$ and  $C\ge 1$, there exists $T = T(\eps)$ such that if $G$ is
any graph
  with at most $Cpn^2$ edges, then $G$ has an $(\eps,p)$-regular partition whose
number of parts is  between $\eps^{-1}$ and $T$.
\end{lem}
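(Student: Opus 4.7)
My plan is to follow Szemer\'edi's original energy-increment approach, with the ordinary density $d$ replaced by the relative density $d_p$ and the energy function $\mathcal{E}_p$ from~(\ref{eq:DefpEnergy}). The two ingredients to establish are: (i) an upper bound on the relevant energy depending only on $\eps$ and $C$; and (ii) an energy-increment step showing that if $\mathcal{P}$ is not $(\eps,p)$-regular, one can refine it into at most $2^{|\mathcal{P}|}$ parts and raise the energy by at least some $\delta=\delta(\eps)>0$. With both in hand, start from a partition into $\lceil\eps^{-1}\rceil$ equal parts, iterate the refinement and deposit small leftover classes into the exceptional part $V_0$; the process terminates in at most $O(1/\delta)$ rounds, giving the required $(\eps,p)$-regular partition with at most $T(\eps,C)$ parts.

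The main obstacle is ingredient~(i): since $d_p(X,Y)$ may be as large as $1/p$, the naive bound $\mathcal{E}_p\le 1/p$ is useless as $p\to 0$. What the edge-count hypothesis does give is the $\ell_1$-weighted bound
\[\sum_{X,Y\in\mathcal{P}}\frac{|X||Y|}{n^2}\,d_p(X,Y)=\frac{2e(G)}{pn^2}\le 2C,\]
and I would exploit this by truncating: choose $M=M(\eps,C)$ large and work instead with
\[\widetilde{\mathcal{E}}_p(G,\mathcal{P})=\sum_{X,Y\in\mathcal{P}}\frac{|X||Y|}{n^2}\min\bigl(d_p(X,Y),M\bigr)^2,\]
which is bounded by $2CM$ using $\min(x,M)^2\le M\min(x,M)\le Mx$. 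This trades an unbounded $\ell_2$ quantity for a bounded $\ell_1$ one at the price of introducing a truncation.

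For the increment step I would split the irregular pairs into two classes. High-density pairs, those with $d_p(V_i,V_j)>M/2$, carry total $\ell_1$-weight at most $4C/M$ by Markov applied to the displayed inequality; so for $M\ge 8C/\eps$ they occupy at most an $\eps/2$ fraction of pair mass and can simply be charged against the $\eps k^2$ regularity defect. On the remaining moderate-density pairs the truncation is inactive, and the classical defect Cauchy--Schwarz applied to a witness $(X',Y')$ of irregularity yields a local gain of at least $\eps^4|V_i||V_j|/n^2$ in $\widetilde{\mathcal{E}}_p$; summing over the remaining $\ge \eps k^2/2$ irregular pairs produces $\delta\gtrsim \eps^5$. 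The delicate point I expect to cost the most care is that after refinement the densities on a block $(X',Y')\subset(V_i,V_j)$ can be as large as $\eps^{-2}d_p(V_i,V_j)$, so truncation may activate on children even when it did not on the parent. This forces one to either set $M$ as a function of the (bounded) iteration depth so that $\min(d_p,M)$ agrees with $d_p$ throughout the entire process on the surviving blocks, or to keep $M$ fixed and track the increment only on the untruncated part, using the $\ell_1$ bound to certify that the truncated fraction of pair mass stays below $\eps/2$ at every stage. Either way, the bounded energy $\widetilde{\mathcal{E}}_p\le 2CM$ together with a per-step gain of $\Omega(\eps^5)$ forces termination in $O(CM\eps^{-5})$ rounds, yielding $T(\eps,C)$ as a tower of $2$'s of that height.
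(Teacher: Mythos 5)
Your high-level plan (energy increment with a modified bounded energy function) is Scott's plan, but your specific modification is not the one that works, and the gap you flag in your last paragraph is not a technicality to be handled with care: it is where the argument actually breaks. The truncated energy density $g(x)=\min(x,M)^2$ is convex on $[0,M]$ but constant on $[M,\infty)$, so $g$ is not convex. The energy-increment machine has three requirements: the energy must be bounded, it must not decrease under refinement, and a witness of irregularity must produce a quantitative gain upon refinement. Monotonicity under refinement is a Jensen/Cauchy--Schwarz statement and it needs convexity of the density function; with $g$ it simply fails. Concretely, take a pair $(V_i,V_j)$ of relative density $1$ and refine each side into $K$ equal parts with all edges concentrated in one child pair, so the child has relative density $K^2$: the untruncated energy of the pair jumps from $\tfrac{|V_i||V_j|}{n^2}$ to $K^2\tfrac{|V_i||V_j|}{n^2}$, but with $M$ fixed and $K^2>M$ the truncated energy drops to $\tfrac{M^2}{K^2}\tfrac{|V_i||V_j|}{n^2}$, which is below the parent value as soon as $K>M$. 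Neither of your two repairs closes this. Tying $M$ to the iteration depth is circular, since the iteration bound comes from dividing the energy ceiling (which involves $M$) by the per-step gain. Keeping $M$ fixed and tracking only the untruncated mass does not help either: the loss you must absorb each round is $\sum \tfrac{|X||Y|}{n^2}\bigl(d_p(X,Y)^2-M^2\bigr)$ over the newly-truncated children, and the $\ell_1$ bound controls only $\sum \tfrac{|X||Y|}{n^2}d_p(X,Y)$, not this quadratic overshoot, which is unbounded as $p\to 0$.

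Scott's actual fix, which the paper invokes by citation rather than reproving, replaces $x^2$ not by a flat cap but by the function that equals $x^2$ up to a threshold and then continues along the tangent line with slope matching at the breakpoint (the paper's displayed formula for this is garbled, but this is its intent). That modified density is convex, so the energy is still monotone under refinement; it grows only linearly for large $x$, so the $\ell_1$ edge-count hypothesis bounds it; and a separate argument is then needed to show that, because only a bounded-mass set of pairs sits above the threshold, the defect Cauchy--Schwarz gain from the remaining irregular pairs survives. The essential feature your truncation lacks and Scott's cap has is convexity. If you want a self-contained proof along your lines, you should replace $\min(x,M)^2$ with the convex tangent-line cap of $x^2$ and then rework the increment step; alternatively, simply cite Scott as the paper does.
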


Briefly, Scott's proof rests on his observation that the Kohayakawa-R\"odl proof
fails for general graphs because it is possible for
$\mathcal{E}_p(G,\mathcal{P})$ to be large due to a small number of dense pairs
in $\mathcal{P}$, and thus no good enough bound on $\mathcal{E}_p$ for that
proof to work can hold. Scott's solution is to define a new energy function, replacing $d_p(X,Y)^2$ with
$\min\big(d_p(X,Y)^2,4L(d_p(X,Y)-L)\big)$ for some large constant $L$. This has
the
effect of `discounting' very dense pairs, so that the new energy function can be
proven to be bounded (by $CL^2$): of course, it also means that refining these
very dense pairs in a non-$(\eps,p)$-regular partition no longer
contributes to increasing the energy function, but, crucially, there are few
such dense pairs and thus an energy increase of $\eps^5/2$ is still guaranteed.
To facilitate use of Lemma~\ref{lem:SparseReg}, we give one last definition.

\begin{definition}
Given $d\in[0,1]$, the \emph{$(\eps,d,p)$-cluster graph} associated to an
$(\eps,p)$-regular partition $V_0\sqcup\cdots\sqcup V_t$ of $G$ is the
graph with vertex set $\{V_1,\ldots,V_t\}$ and edges $V_iV_j$ whenever
$(V_i,V_j)$ is an $(\eps,p)$-regular pair with $d_p(V_i,V_j)\ge d$.
\end{definition}

We shall also require some basic useful facts about regular pairs. These are proved
directly from the definition of regularity and density, and are standard facts
in the usual dense version of the regularity lemma (see~\cite{KomSimSurvey}).

\begin{prop}\label{prop:basic}
Let $(A,B)$ and $(A,C)$ be $(\eps,p)$-regular pairs of relative density at least $d$. Then
\begin{center}
\begin{tabular}{lp{5in}}
1. & $(A,B \cup C)$ is $(\eps,p)$-regular of relative density at least $d$. \\
2. & If $\gamma > \eps$ and $X \subseteq B$ has size at least $\gamma|B|$,
then $(A,X)$ is $(\eps+\eps/\gamma,p)$-regular of relative density at least $d -
\varepsilon$.
\end{tabular}
\end{center}
\end{prop}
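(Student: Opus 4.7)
Both statements are robustness properties of the definition of $(\eps,p)$-regularity, so the plan is to unfold the definition in each case and track constants carefully, in close analogy with the corresponding standard facts for the dense regularity lemma.

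For part 2, the key observation is that the threshold $\gamma > \eps$ makes $X$ a subset of $B$ large enough to trigger the regularity condition of $(A,B)$. First, since $|X| \ge \gamma|B| > \eps|B|$, applying $(\eps,p)$-regularity of $(A,B)$ to the pair $(A,X)$ yields $|d_p(A,X) - d_p(A,B)| \le \eps$, and in particular $d_p(A,X) \ge d - \eps$. For the regularity claim, given test sets $A' \subseteq A$ and $X' \subseteq X$ with $|A'| \ge (\eps+\eps/\gamma)|A|$ and $|X'| \ge (\eps+\eps/\gamma)|X|$, I would verify $|X'| \ge (\eps + \eps/\gamma)\gamma|B| = \eps\gamma|B| + \eps|B| \ge \eps|B|$ and $|A'| \ge \eps|A|$, so that the $(\eps,p)$-regularity of $(A,B)$ applies to both $(A',X')$ and $(A,X)$; a triangle inequality then gives $|d_p(A',X') - d_p(A,X)| \le 2\eps \le \eps + \eps/\gamma$, using $\gamma \le 1$.

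For part 1, I would assume $B$ and $C$ are disjoint. Given test sets $A' \subseteq A$ and $Y \subseteq B \cup C$ with $|A'| \ge \eps|A|$ and $|Y| \ge \eps(|B|+|C|)$, decompose $Y = Y_B \sqcup Y_C$ with $Y_B = Y \cap B$ and $Y_C = Y \cap C$, and write both $d_p(A,B\cup C)$ and $d_p(A',Y)$ as convex combinations:
\[ d_p(A,B\cup C) = \tfrac{|B|}{|B|+|C|}\,d_p(A,B) + \tfrac{|C|}{|B|+|C|}\,d_p(A,C), \quad d_p(A',Y) = \tfrac{|Y_B|}{|Y|}\,d_p(A',Y_B) + \tfrac{|Y_C|}{|Y|}\,d_p(A',Y_C). \]
In the main case $|Y_B| \ge \eps|B|$ and $|Y_C| \ge \eps|C|$, I would apply $(\eps,p)$-regularity of $(A,B)$ and $(A,C)$ to control each summand, then combine the weighted sums. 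In the boundary case where, say, $|Y_B| < \eps|B|$, the hypothesis $|Y| \ge \eps(|B|+|C|)$ forces $|Y_C| \ge |Y| - |Y_B| \ge \eps|C|$, so regularity still applies on the $Y_C$ side while the contribution of $Y_B$ is absorbed via the trivial bound $d_p(A',Y_B) \le p^{-1}$ weighted by the small ratio $|Y_B|/|Y|$. The density lower bound $d_p(A,B\cup C) \ge d$ is then immediate, since a convex combination of numbers each at least $d$ is at least $d$.

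The only mild obstacle is the bookkeeping in the boundary case of part 1; once the inequality $|Y_C| \ge \eps|C|$ is in hand, the argument reduces to a single application of $(\eps,p)$-regularity together with careful accounting of the convex-combination weights. All other steps amount to standard manipulations from the dense regularity setting adapted to relative densities, essentially identical in spirit to the analogous facts in the Koml\'os--Simonovits survey referenced in the paper.
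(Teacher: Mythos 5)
Your part 2 argument is correct and is the standard one: since $|A'|\ge(\eps+\eps/\gamma)|A|\ge\eps|A|$ and $|X'|\ge(\eps+\eps/\gamma)\gamma|B|\ge\eps|B|$, the $(\eps,p)$-regularity of $(A,B)$ applies both to $(A,X)$ and to $(A',X')$, and the triangle inequality together with $\gamma\le 1$ gives $|d_p(A',X')-d_p(A,X)|\le 2\eps\le\eps+\eps/\gamma$.

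Part 1 is not the routine step your write-up suggests, and the phrase ``combine the weighted sums'' hides a genuine gap. Setting $\lambda=|Y_B|/|Y|$ and $\mu=|B|/(|B|+|C|)$, you have $d_p(A',Y)=\lambda\,d_p(A',Y_B)+(1-\lambda)\,d_p(A',Y_C)$ and $d_p(A,B\cup C)=\mu\,d_p(A,B)+(1-\mu)\,d_p(A,C)$. Regularity controls each summand of the first against the corresponding summand of the second, but the weights $\lambda$ and $\mu$ are unrelated, and nothing in the hypotheses forces $d_p(A,B)$ and $d_p(A,C)$ to be close --- they are both only required to be at least $d$. Concretely, take $|B|=|C|$, $A'=A$, and $Y=B$ (a legitimate test set once $\eps\le 1/2$): then
\[ \big|d_p(A',Y)-d_p(A,B\cup C)\big|=\tfrac{1}{2}\big|d_p(A,B)-d_p(A,C)\big|, \]
which can be far larger than $\eps$. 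So no proof of the literal two-sided regularity claim in part 1 can succeed; one needs either an extra hypothesis such as $|d_p(A,B)-d_p(A,C)|\le\eps$, or must settle for the one-sided lower bound $d_p(A',Y)\ge d-O(\eps)$, which is all the paper actually uses when invoking Proposition~\ref{prop:basic} in Lemmas~\ref{lem:embed} and~\ref{lem:kstembed}. Your boundary-case reasoning has a related flaw: when $|Y_B|<\eps|B|$ the ratio $|Y_B|/|Y|$ need not be small (it can be close to $|B|/(|B|+|C|)$), and the ``trivial bound'' $d_p(A',Y_B)\le p^{-1}$ is useless because $p^{-1}=n^{2-\alpha}$ is unbounded. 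Note that the paper gives no proof of this proposition, presenting it as a standard fact from the dense setting; that is accurate for part 2, but the honest dense analogue of part 1 carries precisely the extra hypothesis or weakened conclusion that your proposal omits.
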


\section{Expansion and embedding}

In this section, we show how to use the presence of many triangles in an
$(\eps,d,p)$-cluster graph of an $\mathcal{F}$-free graph $G$ to find odd cycles
in $G$. In the traditional setting of the regularity lemma, namely for dense
graphs, extremal theorems in the cluster graph are employed to find large structures in the
 original graph. In particular, a single clique $K_r$ in the cluster graph is sufficient to provide
 a relatively large complete $r$-partite subgraph of the original graph, which gives a
 derivation of the Erd\H{o}s-Stone Theorem from Tur\'an's Theorem via the regularity lemma. However, in the case of sparse graphs, the appearance  of a particular graph $H$ in the $(\eps,d,p)$-cluster graph is insufficient to imply a copy of $H$
 in the original graph. For example, a construction of Alon~\cite{AlonPseudoBad} shows that we cannot hope
to find even one triangle in a triple of sparse-regular pairs by making use only
of the sparse-regularity. Similarly, our example of a dense $K_{2,3}$-free triangle-free graph in Theorem \ref{thm:k23} is actually a triple of sparse-regular pairs, so the cluster graph is actually a triangle, whereas the original graph has no triangles as well as no $K_{2,3}$. More generally (see Krivelevich and Sudakov~\cite{KS}), one can take any pseudorandom $C_k$-free graph with $k$ odd and replace each vertex $v$ with a set $S_v$ of $o(n)$ new vertices and each edge $uv$ with the complete bipartite graph consisting of all edges between $S_u$ and $S_v$.
This still gives a pseudorandom $C_k$-free graph, which in particular has a
complete cluster graph,
whilst the original graph has no $C_k$. The issue of what further conditions
one should place in order to avoid these `bad examples' is an important
question. Recently Conlon, Fox and Zhao~\cite{CFZ} made substantial progress
in a quite general setting, but our problem is not covered by their work.

Our strategy to find odd cycles is then the following. We will see that, under
the assumption of smoothness, $\mathcal{F}$-free graphs which are close to
extremal (in the sense of having the right order of magnitude number of edges)
have reasonably strong expansion properties. This will allow us to find a
vertex $v$ such that the set $N_\ell(v)$ of vertices at distance $\ell$ from $v$
has linear size, and in the event that the cluster graph contains many triangles
we can further ensure that this set has substantial intersection with both
sides of a dense regular pair. By regularity there is then an edge $e$ in the
set, and thus a cycle of length $2\ell+1$ in the graph (here we ignore the
possibility that the two paths from the ends of $e$ to $v$ might intersect,
but this is not difficult to resolve).

\subsection{Expansion and embedding from smoothness}

The smoothness of a family $\mathcal{F}$ of bipartite graphs with exponent
$\alpha$ implies expansion in any $\mathcal{F}$-free graph with large minimum
degree. Precisely, we have the following lemma.

\begin{lem}\label{lem:expand}
Let $\mathcal{F}$ be an $(\alpha,\beta)$-smooth family of bipartite graphs with $2 > \alpha > \beta \geq 1$. Then there exists $C > 1$ such that for any $\delta > 0$, $\gamma = \delta/2C$ and any $\mathcal{F}$-free bipartite graph
$G(U,V)$ on at most $n \ge (1/\gamma)^{1/(\alpha-\beta)}$ vertices, if every
vertex in $U$ has degree at least $\delta n^{\alpha-1}$, then
\begin{enumerate}
 \item\label{lem:expand:a} if $|U| < |V|$, then $|V| \geq
  \min\big(\gamma^{1/\beta}|U|^{1/\beta}n^{(\alpha-1)/\beta},\gamma^{1/(\alpha - 1)}n\big)$.
 \item\label{lem:expand:b} if $|U| \geq |V|$, then $|V| \geq
\max\big(\gamma n^{\alpha-1}|U|^{2 - \alpha},\gamma^{1/(\alpha - 1)}n\big) $.
\end{enumerate}
\end{lem}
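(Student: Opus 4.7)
The plan is to combine the minimum-degree lower bound on $e(U,V)$ with the upper bound provided by $(\alpha,\beta)$-smoothness, and then unpack the resulting inequality to extract $|V|$. Fix the absolute constant $C' = C'(\mathcal{F})$ hidden in the $O$-term of smoothness, so that if $m=\min(|U|,|V|)$ and $M=\max(|U|,|V|)$ then $e(U,V) \le \rho m M^{\alpha-1} + C' M^\beta$; the minimum-degree hypothesis gives $e(U,V) \ge \delta|U|n^{\alpha-1}$. I will set $C := \max(\rho,2C',2)$ and $\gamma := \delta/(2C)$, so that $C$ depends only on $\mathcal{F}$.

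Consider first part~(\ref{lem:expand:a}), where $|U|<|V|$; here $m=|U|$, $M=|V|$, so $\delta|U|n^{\alpha-1} \le \rho|U||V|^{\alpha-1}+C'|V|^\beta$. Whichever of the two right-hand terms is at least half of the left-hand side will deliver a lower bound on $|V|$: the first alternative rearranges to $|V|\ge(\delta/2\rho)^{1/(\alpha-1)}n \ge \gamma^{1/(\alpha-1)}n$, while the second rearranges to $|V|\ge(\delta/2C')^{1/\beta}|U|^{1/\beta}n^{(\alpha-1)/\beta}\ge\gamma^{1/\beta}|U|^{1/\beta}n^{(\alpha-1)/\beta}$. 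In either case $|V|$ is at least the minimum of the two, which is exactly the bound claimed.

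For part~(\ref{lem:expand:b}), suppose $|U|\ge|V|$; now $m=|V|$ and $M=|U|$, so $\delta|U|n^{\alpha-1}\le\rho|V||U|^{\alpha-1}+C'|U|^\beta$. The first alternative yields $|V|\ge(\delta/2\rho)n^{\alpha-1}|U|^{2-\alpha}\ge\gamma n^{\alpha-1}|U|^{2-\alpha}$, which is the first half of~(\ref{lem:expand:b}). The second alternative would require $|U|^{\beta-1}\ge\delta n^{\alpha-1}/(2C')$; combined with $|U|\le n$ this gives $n^{\alpha-\beta}\le 2C'/\delta$, but the hypothesis $n\ge(1/\gamma)^{1/(\alpha-\beta)}$ translates to $n^{\alpha-\beta}\ge 2C/\delta > 2C'/\delta$, a contradiction. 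Finally, the bound $|V|\ge\gamma^{1/(\alpha-1)}n$ in~(\ref{lem:expand:b}) will follow from the first-alternative bound by substituting $|U|\ge|V|$ into $|U|^{2-\alpha}$ (using that $2-\alpha>0$): $|V|\ge\gamma n^{\alpha-1}|V|^{2-\alpha}$, and hence $|V|^{\alpha-1}\ge\gamma n^{\alpha-1}$.

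The only real work will be the constant bookkeeping: one has to choose $C$ large enough that a single $\gamma$ simultaneously absorbs $\rho$ and the smoothness error constant $C'$, and strictly larger than $C'$ so that the hypothesis on $n$ rules out the subdominant alternative in Case~2. I do not foresee any conceptual difficulty beyond this inequality bookkeeping; the expansion phenomenon is really just the statement that, once the degrees of $U$-vertices force $e(U,V)$ to exceed the leading smoothness term, $|V|$ must be correspondingly large, while the $O(M^\beta)$ error provides the secondary regime that dominates when $|U|$ is small.
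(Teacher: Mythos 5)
Your proof is correct and takes essentially the same route as the paper: lower-bound $e(U,V)$ via the minimum degree, upper-bound it via smoothness, and in part (2) use $n\ge(1/\gamma)^{1/(\alpha-\beta)}$ together with $|U|\le n$ and $\beta\ge 1$ to show the $O(M^\beta)$ error term cannot dominate. The only cosmetic difference is that the paper folds $\rho$ and the $O$-constant into a single $C$ up front rather than tracking them separately, but the inequalities and case split are identical.
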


\begin{proof}
If $|U| < |V|$, then since $\mathcal{F}$ is smooth, there exists $C$ (which we
may insist is larger than $1$) such that
\[ \delta n^{\alpha-1}|U| \leq e(U,V) \leq C(|U||V|^{\alpha - 1} +
|V|^{\beta})\leq 2C\max\big(|U||V|^{\alpha-1},|V|^\beta\big).\]
This immediately implies part~\ref{lem:expand:a}. If $|U| \geq |V|$, then
\begin{equation} \label{part2bound}
\delta n^{\alpha-1}|U| \leq e(U,V) \leq C(|V||U|^{\alpha - 1} +
|U|^{\beta}).
\end{equation}
Since $n^{\alpha - \beta} \geq 1/\gamma$ and $|U| \leq n$ and $\beta \geq 1$,
\[ n^{\alpha - 1} \geq \frac{1}{\gamma}n^{\beta - 1} \geq \frac{2C}{\delta}|U|^{\beta - 1}.\]
We conclude $2C|U|^{\beta} \leq \delta n^{\alpha - 1}|U|$. Therefore (\ref{part2bound}) gives:
\[ \delta n^{\alpha - 1}|U| \leq 2C|V||U|^{\alpha - 1}.\]
This immediately gives $|V| \geq \gamma n^{\alpha - 1}|U|^{2 - \alpha}$.
By substituting $|U| \geq |V|$, and recalling $1<\alpha < 2$, we also obtain
\[ \delta n^{\alpha - 1}|V|^{2 - \alpha} \leq 2C|V|\]
which shows $|V| \geq \gamma^{1/(\alpha - 1)}n$, as required in part 2 of the lemma.
\end{proof}

One might reasonably think that in order to embed odd cycles, it will be enough
to find one triangle $ABB'$ in
the cluster graph. One would select a typical vertex $v$ in $A$, which has at
least $dp|B|/2$ neighbors in $B$, and similarly in $B'$: then
Lemma~\ref{lem:expand} (applied on $BB'$) guarantees a larger second
neighborhood of $v$ in
$B'$, and similarly in $B$: after $k$ steps we have subsets of $N_k(v)$ in
$B$ and $B'$ of linear size which we control by regularity. Unfortunately, the
expansion of $\mathcal{F}$-free
graphs is not quite strong enough for this argument to work: the minimum degree
depends upon the number of clusters, and is thus potentially much smaller than $\eps n^{\alpha-1}$. The result is that we can
only guarantee that $N_k(v)$ covers a fraction of a cluster which is $o(\eps)$, and $(\eps,p)$-regularity tells us
nothing about such small sets. Therefore we use an alternative approach, demanding not
one but a positive density of triangles through $A$ in the cluster graph. These triangles can be used
for an expansion argument in a large fraction of the graph, independent of the number of parts. We have then the following
embedding lemma.

\begin{lem}\label{lem:embed}
Let $\tau,d > 0$, $\alpha > \beta > 1$, and let $\mathcal{F}$ be an $(\alpha,\beta)$-smooth family of
bipartite graphs. Let $\ell_0 (\alpha,\beta)$ be
\[\ell_0 = \Big\lfloor\log_{\beta}\Bigl(\frac{(2\beta-\beta^2)(\alpha - 1)}{\alpha
-
\beta}\Bigr)\Big\rfloor\,~ \mbox{ for $\beta>1$ and}~~ \ell_0 = \big \lfloor1/(\alpha-1)\big\rfloor\,~ \mbox{for $\beta=1$}\,.\]
 Let $p = p(n) = n^{\alpha - 2}$ for $n \in \mathbb N$. Then there exist $\eps_0 =
\eps_0(\alpha,\tau,d)$ and $n_0 = n_0(\alpha,\beta,\tau,d)$ such that the
following holds. If
$\eps < \eps_0$, $n > n_0$, and $G$ is an $\mathcal{F}$-free graph with an
$(\eps,d,p)$-cluster graph $R$ with $t\ge\eps^{-1}$ clusters in which one
cluster $A$ is in at least $\tau t^2$ triangles, then $G$ contains $C_k$ for
each odd $k \in \mathbb
N$ satisfying
\[ 2\ell_0+5\le k\le \eps d n^{\alpha-1}/4t\,.\]
\end{lem}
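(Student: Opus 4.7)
The plan is to find odd cycles in $G$ via a breadth-first search (BFS) from a well-chosen vertex $v \in A$, using the smoothness of $\mathcal{F}$ (through Lemma~\ref{lem:expand}) to grow the BFS layers and sparse regularity to locate an edge inside a deep layer; longer odd cycles in the stated range are then produced by replacing one edge of a short odd cycle with a longer odd path through a sparse-regular pair.

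I would first pass to a subgraph $G'$ on a linear fraction of $V(G)$ in which every vertex has $G'$-degree $\Omega(n^{\alpha-1})$ coming only from $(\eps,p)$-regular pairs of relative density at least $d$; this is standard cleaning, deleting vertices of atypically small degree in incident regular pairs and discarding the $O(\eps t)$ clusters that lie in too many irregular pairs. Next, I would use the hypothesis that $A$ lies in $\tau t^2$ triangles to extract a sublink of $A$ in which every surviving cluster $B$ still participates in $\Omega(\tau t)$ triangles $ABB'$, and then pick a typical $v \in A$ having approximately $dp|B|$ neighbors in a positive fraction of these $B$'s. This yields an initial BFS frontier $L_1 := N(v) \cap V(G')$ of size $\Omega(n^{\alpha-1})$, spread across many clusters. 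I would then iterate: given $L_i$, prune the vertices whose $G'$-neighborhood lies mostly in $L_0 \cup \cdots \cup L_i$ (an $o(|L_i|)$ fraction by degree counting, since the ball grows) and apply Lemma~\ref{lem:expand} to the bipartite graph between $L_i$ and $L_{i+1} := N(L_i) \setminus (L_0 \cup \cdots \cup L_i)$. The recurrence $|L_{i+1}| \geq \gamma^{1/\beta} |L_i|^{1/\beta} n^{(\alpha-1)/\beta}$, iterated from $|L_1| = \Omega(n^{\alpha-1})$, reaches a layer of linear size in precisely $\ell_0$ further steps, which is the first index at which the exponent of $n$ in the iterated geometric series passes $1$; this accounts for the formula defining $\ell_0$.

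Once $|L_\ell|$ is a positive fraction of $n$, pigeonholing over clusters together with the cleaning step gives a regular pair $(B,B')$ of relative density at least $d$ that $L_\ell$ meets on both sides in $\geq \eps$ fraction; regularity then produces an edge $xy$, which combined with two internally disjoint BFS paths from $v$ to $x$ and from $v$ to $y$ yields an odd cycle of length $2\ell+1$ or $2\ell+3$ through $v$. Disjointness of the two $v$-paths is arranged by insisting the BFS tree splits at depth one, which is possible because $L_1$ already spreads across many clusters. This gives an odd cycle of length at most $2\ell_0 + 5$. To realize every odd $k$ up to $\eps d n^{\alpha-1}/(4t)$, I would replace one edge of the short cycle, chosen to lie in a regular pair $(B,B')$, with an odd path of length $2j+1$ between its endpoints inside $(B,B')$; the standard path-counting consequence of sparse regularity produces such paths for every odd length up to roughly $\eps dp|B| = \eps d n^{\alpha-1}/t$, which comfortably exceeds the quoted upper bound.

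The main technical obstacle is maintaining the minimum-degree hypothesis of Lemma~\ref{lem:expand} throughout the BFS: each $L_i$ may thin across many clusters, and a nontrivial fraction of its vertices may send most of their edges backwards into the ball rather than outward into $L_{i+1}$. I would handle this via the pruning step at every iteration and by working inside the clean subgraph $G'$ from the outset, so that the smoothness bound applies uniformly. A secondary subtlety is that the expansion may briefly enter the second regime of Lemma~\ref{lem:expand}, where the frontier is no smaller than its new neighborhood; this regime still delivers polynomial growth and, if anything, accelerates termination. The path-stretching step I expect to be routine from sparse-regularity technology; the delicate BFS accounting -- in particular keeping enough \emph{useful} vertices in each layer so the induction closes -- is where I expect the bulk of the work.
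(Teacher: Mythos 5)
Your high-level plan---BFS from a well-chosen vertex of $A$, grow layers via Lemma~\ref{lem:expand}, use regularity to find an edge deep in the search, and stretch with odd paths in a regular pair---is indeed the skeleton of the paper's proof, and your identification of $\ell_0$ as the number of expansion steps after which the $n$-exponent of $|L_i|$ passes $1$ is exactly right. However, there is a genuine gap at the step where you assert that, once $|L_\ell|$ is linear, ``pigeonholing over clusters together with the cleaning step gives a regular pair $(B,B')$ of relative density at least $d$ that $L_\ell$ meets on both sides in $\geq\eps$ fraction.'' This is not justified and is precisely where the paper's proof does real work. A layer $L_\ell$ of size $cn$ (where $c$ is small, roughly $(\tau^2 d)^{1/(\alpha-1)}$) covers only $O(ct)$ clusters in a substantial fraction, and nothing in your argument forces two of those clusters to be adjacent in $R$, let alone to $A$: the covered clusters could form an independent set in $R$, and in a bipartite $G$ the BFS parity forces exactly that. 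The paper's solution is to \emph{target} $B$ and $B'$ in advance---it fixes an edge $BB'$ of the triangle-link of $A$, builds two vertex-disjoint cluster chains $A, D_1,\ldots,D_{\ell_0}, \mathcal C, B$ and $A, D'_1,\ldots,D'_{\ell_0}, \mathcal C', B'$, cleans each cluster only for degree into the \emph{next} cluster in its own chain, and runs the two expansions in parallel, landing in $B$ and $B'$ by construction. This structure is also what buys internal disjointness of the two $v$-paths, which you dismiss with the unexplained ``insist the BFS tree splits at depth one''; a single-sourced BFS tree can reconverge and diverge, and enforcing disjointness is exactly as hard as engineering the landing in $B$ and $B'$.

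A second, smaller issue is the initial cleaning: you claim a linear-fraction subgraph $G'$ in which \emph{every} vertex has degree $\Omega(n^{\alpha-1})$ all coming from dense regular pairs. The lemma assumes no minimum degree on $G$, and regularity gives only per-pair typicality; joint typicality across all incident pairs fails a union bound once $\eps t\gg1$ (which happens, since $t$ can be as large as $T(\eps)$). This can be repaired by first passing, as the paper does, to a subgraph $H$ of the triangle-link of $A$ with minimum degree $\ge\tau t$ and then using a Markov-type averaging over the pairs of $H$ incident to each cluster; but even with that fix, you arrive back at needing the structure in $H$ that the paper constructs, because the degree into a \emph{single} other cluster is only $\Theta(n^{\alpha-1}/t)$, which after $\ell_0$ applications of Lemma~\ref{lem:expand} yields a set that is an $o(\eps)$-fraction of a cluster and is therefore invisible to $(\eps,p)$-regularity. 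The paper's fan-out set $\mathcal C$ of $\Theta(\tau^2 t)$ clusters is introduced exactly to make the degree $t$-independent at the decisive expansion step. I'd recommend reading the remark immediately preceding Lemma~\ref{lem:embed} in the paper, which explains why the naive BFS argument fails, and then the explicit cluster scaffold $A,D_i,\mathcal C,B$ (and its primed copy), which is the substantive content missing from your proposal.
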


\medskip

\begin{proof}
Our first task is to find a structure in $R$ suitable for our expansion method.
This will consist of clusters $A$, $B$, $B'$, sets of clusters $\mathcal{C}$
and $\mathcal{C}'$, and clusters $D_1,\ldots,D_{\ell_0}$ and
$D'_1,\ldots,D'_{\ell_0}$, which are adjacent as follows. The clusters
$D_1,\ldots,D_{\ell_0}$ form a path (in that order); $D_1$ is adjacent to $A$,
$D_{\ell_0}$ to all clusters $\mathcal{C}$, and all clusters $\mathcal{C}$ to
$B$. We require similar adjacencies involving
$B',\mathcal{C}',D'_1,\ldots,D'_{\ell_0}$. Finally $B$ and $B'$ are adjacent.
Our embedding will proceed as follows. We will choose a typical vertex $v$ in
$A$, which then has a substantial first neighborhood in both $D_1$ and $D'_1$.
We apply Lemma~\ref{lem:expand} between $D_i$ and $D_{i+1}$ for each $i$ to find
eventually that the $\ell_0$th neighborhood of $v$ in $D_{\ell_0}$ has size
very close to linear: in particular, large enough that at the next application
of Lemma~\ref{lem:expand} part 1 to bound the size of the $(\ell_0+1)$st
neighborhood of $v$ in $\mathcal{C}$ we will find that the second term in the
minimum is smaller. It is important that this neighborhood's size should not
depend upon $t$, and therefore at this step the minimum degree input to
Lemma~\ref{lem:expand} must be independent of $t$: to achieve this we will need
that $\mathcal{C}$ contains a large fraction (independent of $t$) of the entire cluster graph. In a final application of Lemma~\ref{lem:expand} we will find a $(\ell_0+2)$nd neighborhood of $v$ in $B$, and this will cover a fraction of $B$ independent
of $t$, and thus much bigger than $\eps$. By the same procedure we find a large
$(\ell_0+2)$nd neighborhood of $v$ in $B'$: by $(\eps,p)$-regularity of $BB'$
the bipartite graph between the two $(\ell_0+2)$nd neighborhoods contains many
edges, and in particular paths of all (not too long) lengths. By construction a
path of length $2i-1$ vertices in this graph extends to a cycle (through $v$) on
$2i+2\ell_0+3$ vertices, as required. This approach is illustrated in the figure below.

\begin{figure}
\psfrag{A}{$A$}
\psfrag{B}{$B$}
\psfrag{B'}{$B'$}
\psfrag{C}{$\mathcal{C}$}
\psfrag{C'}{$\mathcal{C}'$}
\psfrag{D1}{$D_1$}
\psfrag{Dl}{$D_{\ell_0}$}
\psfrag{D1'}{$D'_1$}
\psfrag{Dl'}{$D'_{\ell_0}$}
\psfrag{v}{$v$}
\psfrag{N1}{$N_1$}
\psfrag{N1'}{$N'_1$}
\psfrag{Nl}{$N_{\ell_0}$}
\psfrag{Nl'}{$N'_{\ell_0}$}
\psfrag{Nl1}{$N_{\ell_0+1}$}
\psfrag{Nl1'}{$N'_{\ell_0+1}$}
\psfrag{Nl2}{$N_{\ell_0+2}$}
\psfrag{Nl2'}{$N'_{\ell_0+2}$}
\centerline{\includegraphics[width=5in]{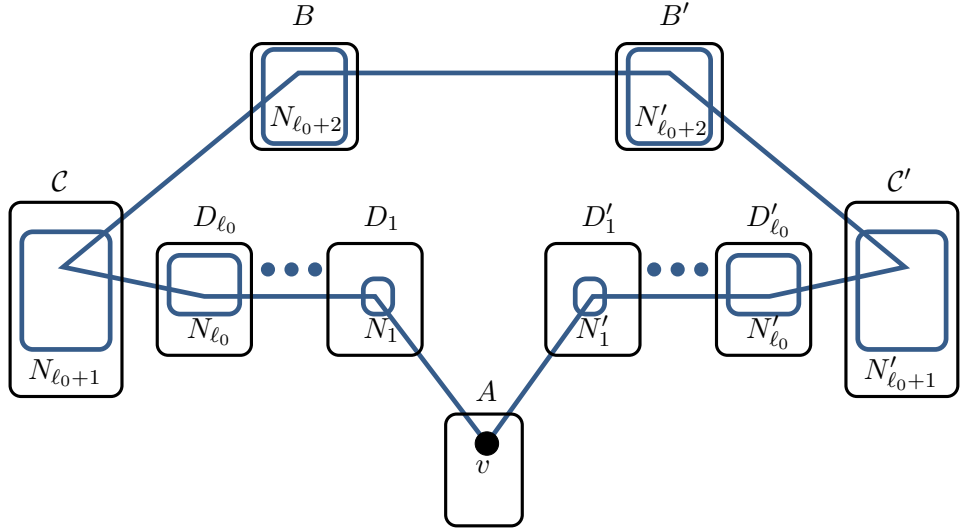}}
\caption{Constructing an odd cycle from the cluster graph}
\end{figure}

For the proof, we choose constants as follows. Let $C$ be the constant returned by
Lemma~\ref{lem:expand}. We will choose $\eps_0$ to be a small enough positive function
of $C,d,\ell_0,\tau$ and $\alpha$ in all the calculations that follow. Then we choose
$n_0$ to be large enough relative to these constants, $\epsilon$ and their reciprocals.

Let $H'$ be the subgraph of $R$
consisting of edges of $R$ which form triangles together with $A$: by definition
$H'$ has at least $\tau t^2$ edges, and so average degree at least $2\tau t$. It
follows that $H'$ has a subgraph $H$ whose minimum degree is at least $\tau t$,
which we fix.

We will now choose sequentially the clusters required for the structure described above,
under the assumption that every cluster has degree at least $\tau t/2$ in $H'$ to unused clusters.
Let $B$ and $B'$ be any two adjacent clusters in $H$. Let $\mathcal{C}_1$
consist of $\tau t/4$ neighbors of $B$ in $H$. Each of these clusters has at
least $\tau t/4$ neighbors not equal to $B$ or $B'$ or to clusters in $\mathcal{C}_1$, and
consequently there is a cluster $D_{\ell_0}$ adjacent to a set of $\tau^2 t/16$
clusters $\mathcal{C}$ of $\mathcal{C}_1$. We now construct the path
$D_{\ell_0},\ldots,D_1$ greedily choosing any so far unused neighbor of each
$D_i$ in turn. We now repeat the same process (avoiding previously used
clusters) to construct $\mathcal{C}'$ and $D'_{\ell_0},\ldots,D'_1$.
Note that at the end we have used only $2(\ell_0+1)+\tau^2 t/8<\tau t/2$
clusters (where the inequality follows from $t\ge\eps^{-1}$ and choice of
$\eps_0$), so the assumption of degree at least $\tau t/2$ to unused clusters is valid.

We now delete vertices from these clusters as follows (in a slight abuse of
notation, we will continue to use the same letters for the clusters after
deletion). Let
\[ \delta = \tau^2 d/64 \quad \text{ and } \quad \tilde{\delta} = d/4t.\]

 From each cluster in $\mathcal{C}$ we remove all vertices whose
degree into $B$ is smaller than $\tilde{\delta} n^{\alpha-1}$. Then
from $D_{\ell_0}$ we remove all vertices whose degree into $\bigcup\mathcal{C}$
is smaller than $\delta n^{\alpha-1}$, and then for
each $\ell_0-1\ge i\ge 1$ in succession we remove from $D_i$ all
vertices whose degree into $D_{i+1}$ is smaller than $\tilde{\delta}n^{\alpha-1}$.
Since we have $(d-\eps)>d/2$, by $(\eps,p)$-regularity of
the various pairs (including, by Proposition~\ref{prop:basic},
the pair $\big(D_{\ell_0},\bigcup\mathcal{C}\big)$ ) we do not delete more than
$\eps n/t$ vertices (and in
particular, since $\eps<1/2$, not more than half of the vertices) from any
cluster. We perform the same process on the clusters
$\mathcal{C}',D'_{\ell_0},\ldots,D'_1$.

We choose a vertex $v\in A$ whose neighborhood in both $D_1$ and $D'_1$ has
size at least $(d-\eps)n^{\alpha-1}/t> \tilde{\delta} n^{\alpha-1}$: again this is
possible by $(\eps,p)$-regularity of $AD_1$ and $AD'_1$, and since $\eps<1/2$.
Let $N_1$ be the neighbors of $v$ in $D_1$, and for each $2\le i\le \ell_0$
let $N_i$ be the neighbors of $N_{i-1}$ in $D_i$. Let $N_{\ell_0+1}$ be the
set of neighbors of $N_{\ell_0}$ in $\bigcup\mathcal{C}$, and finally $N_{\ell_0+2}$
be the set of neighbors of $N_{\ell_0+1}$ in $B$.

For $i \geq 0$ and $\beta \geq 1$, define $f(i,\beta) = \frac{1}{\beta - 1} + \frac{\beta^2 - 2\beta}{(\beta - 1)\beta^i}$.
Note that $f(i,\beta)$ is well defined since $\beta^i+\beta^2-2\beta$ is divisible by $\beta-1$ as a polynomial in $\beta$. Also, by definition,
$f(1,\beta)=1$ and $f(i,1) = i$ for all $i \geq 0$.
Let $\tilde{\gamma} = \tilde{\delta}/2C$, where $C$ is the constant in Lemma~\ref{lem:expand}. We claim that for
each $1\le i\le \ell_0$ we have
\[ |N_i| \geq \min\bigl\{\bigl(\tilde{\gamma}n^{\alpha - 1}\bigr)^{f(i,\beta)},\tilde{\gamma}^{1/(\alpha - 1)} n\bigr\}.\]
Proceed by induction on $i$. The statement is true for $i=1$ by choice of $v$. If the claim holds for some $1\le
i\le\ell_0-1$, then we apply Lemma~\ref{lem:expand} to the bipartite graph
spanned by edges between $U = N_i$ and $V = N_{i + 1}$. By construction, every vertex of $U$ has
degree at least $\tilde{\delta} n^{\alpha - 1}$. By Lemma~\ref{lem:expand} part 1,
if $|N_i| < |N_{i+1}|$ then
\[ |N_{i+1}| \geq \min\{\tilde{\gamma}^{1/\beta}|N_i|^{1/\beta}n^{(\alpha-1)/\beta},\tilde{\gamma}^{1/(\alpha - 1)}n\}\]
provided $n \geq n_0 \geq (2C/\tilde{\delta})^{1/(\alpha - \beta)}$. By the above lower bound on $|N_i|$,
the first expression in the minimum is at least
\[\tilde{\gamma}^{\tfrac{1}{\beta-1}+\tfrac{\beta^2-2\beta}{
(\beta-1)\beta^{i+1}}}n^{(\alpha-1)
\big(\tfrac{1}{\beta-1}+\tfrac{\beta^2-2\beta}{(\beta-1)\beta^{i+1}}\big)}\,,\]
as required. By Lemma~\ref{lem:expand} part 2, if $|N_i| \geq |N_{i+1}|$ then
$|N_{i+1}| \geq \tilde{\gamma}^{1/(\alpha - 1)}n$, as required. This verifies the claim.

We now apply Lemma~\ref{lem:expand} to the bipartite graph spanned by edges between
$U = N_{\ell_0}$ and $V = N_{\ell_0 + 1}$ to find bounds on the size of
$N_{\ell_0+1}$. Note that the degree of all vertices of $N_{\ell_0}$ into
$\bigcup \mathcal{C}$ is at least $\delta n^{\alpha - 1}$, by construction.
We have that for $\beta = 1$, $f(\ell_0 + 1,1) = \ell_0 + 1 > 1/(\alpha - 1)$.
Also for $\beta > 1$, the definition of $\ell_0$
ensures
$$f(\ell_0 + 1,\beta) = \frac{1}{\beta - 1}\left(1 - \frac{2\beta-\beta^2}{\beta^{\ell_0 + 1}}\right)
>\frac{1}{\beta - 1} \left(1 - \frac{2\beta-\beta^2}{\frac{(2\beta-\beta^2)(\alpha-1)}{\alpha-\beta}}\right)
=\frac{1}{\alpha - 1}.$$
Let $\gamma = \delta/2C$. Then Lemma~\ref{lem:expand} shows
$|N_{\ell_0 +1}| \geq \gamma^{1/(\alpha - 1)}n$. Indeed, we have that
the minimum in  part~\ref{lem:expand:a} of this lemma takes the
value $\gamma^{1/(\alpha - 1)}n$, since $f(\ell_0 + 1,\beta)>1/(\alpha - 1)$ and $n_0$ is large enough.
By choosing $\eps_0$ small enough and since $t\ge 1/\eps$, we have that $\gamma^{1/(\alpha - 1)}n >n/t$.
Thus we are guaranteed to
have $|N_{\ell_0+1}| > |B| \ge |N_{\ell_0+2}|$. Since every vertex of $N_{\ell_0 + 1}$ has
at least $\tilde{\delta}n^{\alpha - 1}$ neighbors in $B$, a final application of
Lemma~\ref{lem:expand} part~\ref{lem:expand:b} to the bipartite graph spanned by
edges between $N_{\ell_0 + 1}$ and $N_{\ell_0 + 2}$ with $\tilde{\gamma} = \tilde{\delta}/2C$
yields
\[|N_{\ell_0+2}| \ge \tilde{\gamma} n^{\alpha-1} \cdot \gamma^{\frac{2-\alpha}{\alpha -
1}}n^{2-\alpha}=\frac{d}{8Ct}n^{\alpha-1} \cdot \left(\frac{\tau^2d}{128C}\right)^{\frac{2-\alpha}{\alpha -
1}}n^{2-\alpha}=\frac{d}{8C} \left(\frac{\tau^2d}{128C}\right)^{\frac{2-\alpha}{\alpha -
1}}n/t
> \eps n/t\]
where the last inequality follows by choosing $\eps_0$ small enough.

We define similarly sets $N'_1,\ldots,N'_{\ell_0+2}$ and obtain that
$N'_{\ell_0+2}$ contains at least $\eps n/t$ vertices of $B'$. By
$(\eps,p)$-regularity the bipartite graph $G[N_{\ell_0+2},N'_{\ell_0+2}]$
has average degree at least $\eps (d-\eps)n^{\alpha-1}/t$ and so has a subgraph
with minimum degree at least $\eps d n^{\alpha-1}/4t$. This subgraph contains
paths of every odd length up to $\eps d n^{\alpha-1}/4t$. Since by our construction a path of length $2i-1$
between $B$ and $B'$ extends to an odd cycle in $G$ through $v$ of length $2i+2\ell_0+3$, this completes the proof.
\end{proof}

\vspace{0.1cm}
\noindent
{\bf Remark.} Note that, by definition, we have that $\ell_0 \geq 1$ for all values of $1 \leq \beta < \alpha < 2$.
Therefore this embedding lemma cannot be used to find odd cycles of length shorter than seven.

\subsection{Expansion and embedding: complete bipartite graphs}

The purpose of this section is to show that when $\mathcal{F}=\{K_{s,t}\}$ for
some $2\le s\le t$, if $K_{s,t}$ is $(2-1/s,\beta)$-smooth for some
$\beta<2-1/s$, then we can improve on the expansion and embedding of the
previous subsection, and embed cycles of length at least five. First we prove the
necessary expansion.

\begin{lem}\label{lem:kstexpand}
Let $G$ be an $n$-vertex $K_{s,t}$-free graph. Let $v$ be a vertex of $G$,
  $X\subset N(v)$, and $Y$ a subset of $V(G)$ disjoint from
  $X\cup\{v\}$. Suppose that $|X|\ge \rho_Xn^{1-1/s}$ and $|N(x)\cap Y|\ge
  \rho_Y|Y|n^{-1/s}$ for each $x\in X$. Then
  \[|N(X)\cap Y|\ge
  \Big(\frac{\rho_X}{t-1}\Big)^{\tfrac{1}{s-1}}\Big(\rho_Y|Y| - sn^{1/s}\Big).\]
\end{lem}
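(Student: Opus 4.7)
The plan is to exploit the hypothesis $X \subseteq N(v)$ to boost a K\H{o}v\'ari-S\'os-Tur\'an counting argument from $s$-subsets to $(s-1)$-subsets, which is precisely what produces the $1/(s-1)$ exponent in the conclusion. I would set $W = N(X) \cap Y$ and $d_{\min} = \rho_Y|Y|n^{-1/s}$; since $N(x) \cap Y$ is automatically contained in $W$ for every $x \in X$, the hypothesis on individual degrees gives $d_W(x) = |N(x) \cap W| \geq d_{\min}$.

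The central observation I would first prove is: for every $(s-1)$-subset $T \subseteq W$,
\[ \Bigl| X \cap \bigcap_{w \in T} N(w) \Bigr| \leq t-1. \]
To see this, note that $\{v\} \cup T$ is an $s$-subset of $V(G)$, disjoint from $X$ because $Y$ is disjoint from $X \cup \{v\}$; any $x \in X$ adjacent to every $w \in T$ is automatically adjacent to $v$ since $X \subseteq N(v)$, so $x$ is a common neighbor of the $s$-set $\{v\} \cup T$, and $K_{s,t}$-freeness caps the number of such common neighbors by $t-1$.

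Next I would double-count pairs $(x, T)$ with $x \in X$, $T \in \binom{W}{s-1}$, and $T \subseteq N(x)$, obtaining
\[ |X|\binom{d_{\min}}{s-1} \leq \sum_{x \in X} \binom{d_W(x)}{s-1} = \sum_{T \in \binom{W}{s-1}} \Bigl| X \cap \bigcap_{w \in T} N(w) \Bigr| \leq (t-1)\binom{|W|}{s-1}, \]
where the first inequality combines convexity of $\binom{\cdot}{s-1}$ with the uniform lower bound $d_W(x) \geq d_{\min}$. If $\rho_Y|Y| < sn^{1/s}$ the claimed bound is vacuous, so I may assume $d_{\min} \geq s$; then I bound $\binom{d_{\min}}{s-1} \geq (d_{\min}-s+2)^{s-1}/(s-1)!$ and $\binom{|W|}{s-1} \leq |W|^{s-1}/(s-1)!$, rearrange, take the $(s-1)$-th root, and substitute $|X| \geq \rho_X n^{1-1/s}$ using the identity $(n^{1-1/s})^{1/(s-1)} = n^{1/s}$ together with $d_{\min}\cdot n^{1/s} = \rho_Y|Y|$. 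This yields $|W| \geq (\rho_X/(t-1))^{1/(s-1)}(\rho_Y|Y| - (s-2)n^{1/s})$, which implies the claim.

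The only genuinely new idea is the first observation: recognising that $X \subseteq N(v)$ lets one fold $v$ into the $s$-side of the forbidden $K_{s,t}$, trading an $s$-subset bound for an $(s-1)$-subset bound. Everything else is a routine KST manipulation, and I anticipate no real obstacle beyond bookkeeping in the binomial approximations at the end.
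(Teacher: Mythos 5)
Your proposal is correct and follows essentially the same route as the paper: count $(s-1)$-subsets of $N(X)\cap Y$ appearing in neighborhoods of vertices of $X$, use $X\subseteq N(v)$ to fold $v$ in as the $s$-th vertex so that $K_{s,t}$-freeness caps the multiplicity of each $(s-1)$-set at $t-1$, and finish with the standard binomial estimates. The only difference is cosmetic (you isolate the multiplicity bound as a named observation and note the vacuous case explicitly); the paper's intermediate bound is in fact $\rho_Y|Y|-(s-2)n^{1/s}$, matching yours before it is loosened to $\rho_Y|Y|-sn^{1/s}$.
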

\begin{proof}
  Each vertex in $X$ contains at least $\binom{\rho_Y|Y|n^{-1/s}}{s-1}$ subsets
  of $N(X)\cap Y$ of size $s-1$ in its neighborhood. However, if there were
  $t$ vertices of $X$ all of whose neighborhoods contained one particular
  $(s-1)$-set in $N(X)\cap Y$, then together with $v$ we would find
  $K_{s,t}$ in $G$, which is forbidden. It follows that we have
  \[|X|\binom{\rho_Y|Y|n^{-\tfrac{1}{s}}}{s-1}\le (t-1)\binom{|N(X)\cap
  Y|}{s-1}\] and hence
  \begin{align*}
    |X|\big(\rho_Y|Y|n^{-\tfrac{1}{s}}-s+2\big)^{s-1}& \le
    (t-1)\big(|N(X)\cap Y|\big)^{s-1}
    \end{align*}
    which implies
  \begin{align*}
    |N(X)\cap Y| & \ge
(t-1)^{-\tfrac{1}{s-1}}\rho_X^{\tfrac{1}{s-1}}n^{\tfrac{1}{s}}\big(\rho_Y|Y|n^{-\tfrac{1}{s}}-s+2\big)\\
     & \ge\Big(\frac{\rho_X}{t-1}\Big)^{\tfrac{1}{s-1}}(\rho_Y|Y| - sn^{1/s}).
  \end{align*}
  This proves Lemma \ref{lem:kstexpand}.
\end{proof}

Our embedding statement is then the following.

\begin{lem}\label{lem:kstembed}
Let $\tau,d > 0$, and suppose that for some $2\le s\le t$ the graph $K_{s,t}$ is
$(2-1/s,\beta)$-smooth for some $\beta<2-1/s$.
Let $p
= p(n) = n^{-1/s}$ for $n \in \mathbb N$. Then there exist $\eps_0 =
\eps_0(\tau,d)$ and $n_0 = n_0(\tau,d)$ such that the following
holds. If
$\eps < \eps_0$, $n > n_0$, and $G$ is a $K_{s,t}$-free graph with an
$(\eps,d,p)$-cluster graph $R$ with $q\ge\eps^{-1}$ clusters in which one
cluster $A$ is in at least $\tau q^2$ triangles, then $G$ contains $C_k$ for
each odd $k \in \mathbb
N$ satisfying
\[ 5\le k\le \eps d n^{1-1/s}/4q\,.\]
\end{lem}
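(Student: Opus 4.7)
The plan is to adapt the proof of Lemma \ref{lem:embed}, but to exploit the much stronger expansion afforded by Lemma \ref{lem:kstexpand} so that the long intermediate path $D_1,\ldots,D_{\ell_0}$ becomes unnecessary. Instead, a single application of Lemma \ref{lem:kstexpand}, in which the starting set $X$ is already spread across many triangle-forming clusters adjacent to the pivot, will suffice to reach a linear fraction of a cluster. This then allows odd cycles as short as five.

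First I would follow the opening of Lemma \ref{lem:embed}: using the hypothesis that $A$ is in at least $\tau q^2$ triangles, take the subgraph $H'$ of $R$ whose edges $BB'$ form triangles with $A$, extract $H\subseteq H'$ with minimum degree at least $\tau q$, and pick any edge $BB'\in H$. Then choose disjoint sets of clusters $\mathcal{C}\subseteq N_H(B)\setminus\{B'\}$ and $\mathcal{C}'\subseteq N_H(B')\setminus(\mathcal{C}\cup\{B\})$, each of size $\lfloor\tau q/4\rfloor$. By construction every cluster of $\mathcal{C}$ is adjacent in $R$ to both $A$ and $B$, and similarly for $\mathcal{C}'$, while $B$ and $B'$ themselves form a regular pair in $R$. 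After cleaning each of these clusters by deleting vertices of atypical degree into the relevant adjacent clusters (losing at most an $\eps$-fraction by Proposition \ref{prop:basic}), I pick a typical $v\in A$ with $|N(v)\cap C|\ge(d-2\eps)p|C|$ for every $C\in\mathcal{C}\cup\mathcal{C}'$, and set $N_1=N(v)\cap\bigcup\mathcal{C}$ and $N_1'=N(v)\cap\bigcup\mathcal{C}'$. Summing over $\lfloor\tau q/4\rfloor$ clusters gives $|N_1|,|N_1'|\ge c_0 n^{1-1/s}$, where $c_0=c_0(\tau,d)>0$ is \emph{independent of $q$}.

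Next, I apply Lemma \ref{lem:kstexpand} once with pivot $v$, $X=N_1\subseteq N(v)$ and $Y=B$: the cleaning guarantees that every $x\in N_1$ has at least $(d-\eps)p|B|$ neighbors in $B$, so we may take $\rho_Y\approx d-\eps$. The conclusion is $|N_2|:=|N(N_1)\cap B|\ge c_1 n/q$ for some $c_1=c_1(\tau,d,t,s)>0$, again independent of $q$, provided $n$ is large enough that the $sn^{1/s}$ term in Lemma \ref{lem:kstexpand} is absorbed. Analogously, $|N_2'|\ge c_1 n/q$ inside $B'$. Choosing $\eps_0$ so that $c_1\ge 2\eps_0$, both $N_2$ and $N_2'$ cover at least a $2\eps$-fraction of their cluster, so by Proposition \ref{prop:basic} the pair $(N_2,N_2')$ is $(O(\eps),p)$-regular of relative density at least $d-\eps$. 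A standard argument then extracts a subgraph of $G[N_2,N_2']$ of minimum degree at least $\eps d n^{1-1/s}/(4q)$ containing paths of every odd length from $1$ up to this minimum degree.

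Finally, a path of odd length $\ell$ from $z_0\in N_2$ to $z_\ell\in N_2'$ closes to a cycle of length $\ell+4$ through $v$ via the two-edge extensions $v\,x_1\,z_0$ and $z_\ell\,x_1'\,v$, where $x_1\in N_1$ is any neighbor of $z_0$ that witnesses membership in $N_2$, and similarly $x_1'\in N_1'$. Since $A$, $B$, $B'$ and the clusters of $\mathcal{C},\mathcal{C}'$ lie in pairwise distinct parts of the partition, all cycle vertices are distinct. Letting $\ell$ range over odd integers up to the minimum degree of our subgraph produces cycles of every odd length from $5$ up to essentially $\eps d n^{1-1/s}/(4q)$, as required. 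The main obstacle is the expansion step: without starting $N_1$ on a union of $\Theta(q)$ clusters, a single application of Lemma \ref{lem:kstexpand} would produce a subset of $B$ of size only $\Theta(n/q^{s/(s-1)})$, too small for regularity once $q$ exceeds $O(1/\eps)$. The many-triangles hypothesis is exactly what makes $|N_1|$ linear in $n^{1-1/s}$ with $\rho_X$ independent of $q$, which is the reduction that lets us bypass the $D_i$ clusters of Lemma \ref{lem:embed} and reach cycles of length five.
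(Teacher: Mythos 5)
Your proposal is correct and follows essentially the same approach as the paper: after building $\mathcal{C}$, $\mathcal{C}'$ and cleaning as in Lemma~\ref{lem:embed}, a single application of Lemma~\ref{lem:kstexpand} from $N_1$ (spread over $\Theta(q)$ clusters, so $|N_1|\ge c_0 n^{1-1/s}$ with $c_0$ independent of $q$) into $B$ jumps directly to a subset of $B$ of size a positive fraction of $|B|$, which is precisely what lets one bypass the $D_i$-chain and reach cycles of length five. One small technical slip: requiring $|N(v)\cap C|\ge(d-2\eps)p|C|$ for \emph{every} $C\in\mathcal{C}\cup\mathcal{C}'$ is too strong, since there are $\Theta(\tau q)$ such clusters and the union bound over atypical vertices of $A$ can exceed $|A|$ once $q\gg 1/\eps$. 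This is harmless, because everything you subsequently use is the total size $|N_1|$, which follows from $v$ being typical with respect to $\bigcup\mathcal{C}$ and $\bigcup\mathcal{C}'$ treated as single $(3\eps,p)$-regular pairs with $A$ via Proposition~\ref{prop:basic} (the paper's route); the per-vertex degree of $N_1$ into $B$ is supplied by cleaning, not by typicality of $v$.
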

\begin{proof}[Sketch of proof]
We follow the notation and proof of Lemma~\ref{lem:embed}, to produce two adjacent clusters $B, B'$ and two families of
sets $\mathcal{C}$ and $\mathcal{C}'$ disjoint from them and from each other such that the following holds.
Every vertex of every set in $\mathcal{C}$ ($\mathcal{C}'$) has at least $dn^{1 - 1/s}/4q$
neighbors in $B$ ($B'$ respectively), each family
$\mathcal{C}$ and $\mathcal{C}'$ contains $\tau q/4$ sets, and every set in $\mathcal{C} \cup \mathcal{C}'$ was obtained from a partition cluster, which is  adjacent to $A$ in the
cluster graph, by removing fewer than $\eps n/q$ vertices. In particular, every set in $\mathcal{C} \cup \mathcal{C}'$ forms together with $A$ a $(3\eps,p)$-regular pair of relative density at least $d-\eps$.

Now we slightly modify the proof. Choose a vertex $v \in A$
which has at least $\tfrac{\tau^2d}{16} n^{1-1/s}$ neighbors in each of
$\bigcup\mathcal{C}$ and $\bigcup\mathcal{C}'$. This vertex exists by
$(3\eps,p)$-regularity of the pairs $\big(A,\bigcup\mathcal{C}\big)$,
$\big(A,\bigcup\mathcal{C}'\big)$, and since $\mathcal{C}, \mathcal{C}'$ each contain $\tau q/4$ sets and $\eps = o(d)$. We let $N_1$ be the
neighborhood of $v$ in $\bigcup\mathcal{C}$, and $N_2$ the neighborhood of
$N_1$ in $B$. By Lemma~\ref{lem:kstexpand} we have
\[|N_2|\ge
\Big(\frac{\tau^2d}{16(t-1)}\Big)^{\tfrac{1}{s-1}}(\tfrac{d}{4}|B|-sn^{1/s})
>\eps|B|\]
where the last inequality comes from choosing $\eps_0$ small enough and $n_0$ large enough. Similarly
we define $N'_1$ and $N'_2$ and obtain
that $|N'_2|>\eps|B'|$. Now since $B$ and $B'$ form an edge of the cluster graph,
the graph $G[N_2,N'_2]$ has average degree at least $\eps(d - \eps)n^{1 - 1/s}/q$.
Then it has a subgraph with minimum degree at least $\eps d n^{1 - 1/s}/4q$ and therefore contains
paths of every odd length up to $\eps d n^{\alpha-1}/4q$.
Since by our construction a path of length $2i-1$
between $N_2$ and $N_2'$ extends to an odd cycle in $G$ through $v$ of length $2i+3$, this completes the proof.
\end{proof}

\section{Transference of density}

Suppose that $G$ is an $\mathcal{F}$-free graph, and $R$ an
$(\eps,d,p)$-cluster graph for $G$. The aim of this section is to show that
under the assumption of smoothness, the density of $G$ (relative to the extremal
$\mathcal{F}$-free graphs) transfers to an (absolute) density of $R$: this
allows us to use classical extremal results to prove our main theorems.

\begin{lem}\label{lem:degtransfer}
  Let $\mathcal{F}$ be an $(\alpha,\beta)$-smooth family with relative density $\rho$ for some
  $2>\alpha>\beta\ge 1$ and $\rho>0$. For each $\gamma>0$ there exist $\eps_0,d_0>0$ such that
  for each $0<\eps\le\eps_0$, $0<d\le d_0$ and $T$ there is $n_0$ with the
  following property. Suppose $n\ge n_0$, $G$ is an $\mathcal{F}$-free
  graph on $n$ vertices, $p=n^{\alpha-2}$ and $R$ is an $(\eps,d,p)$-cluster graph with
  $\eps^{-1}\le t\le T$ vertices as obtained from Lemma~\ref{lem:SparseReg}.
  \begin{enumerate}
    \item\label{degtrans1} If $|E(G)|\ge (\mu^{\alpha-1}+\gamma) \rho
    p\frac{n^2}{2}$, then $|E(R)|\ge(\mu-\gamma)\frac{t^2}{2}$.
    \item\label{degtrans2} If a cluster $V_i\in V(R)$
  meets at least $(\mu^{\alpha-1}+\gamma)\rho n^{\alpha-1}|V_i|$ edges of $G$,
  then either $V_i$ is in at least $\gamma t$ irregular pairs, or
  $d_R(V_i)\ge(\mu-\gamma) t$.
  \end{enumerate}
\end{lem}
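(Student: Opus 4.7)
My plan for Part~\ref{degtrans1} is to argue by contrapositive: assuming $|E(R)|<(\mu-\gamma)t^2/2$, I will show $|E(G)|<(\mu^{\alpha-1}+\gamma)\rho pn^2/2$. A direct pair-by-pair decomposition is doomed, since summing the smoothness bound $e(V_i,V_j)\le\rho(n/t)^\alpha(1+o(1))$ over the up to $|E(R)|$ dense regular pairs yields roughly $\rho n^\alpha t^{2-\alpha}/2$, which blows up with $t$ because $\alpha<2$.

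The essential idea is to collect neighborhoods cluster by cluster. For each $V_i$, let $N_i\subseteq V(G)$ be the union of those $V_j$ such that $(V_i,V_j)$ is either dense regular or irregular. Smoothness applied to the bipartite graph $G[V_i,N_i]$ gives $e(V_i,N_i)\le\rho|V_i||N_i|^{\alpha-1}+O(|N_i|^\beta)$, and summing over $i$ yields exactly twice the total contribution of dense regular and irregular pairs to $|E(G)|$. Since $\sum_i|N_i|\le(n/t)(2|E(R)|+2\eps t^2)$ and $x\mapsto x^{\alpha-1}$ is concave on $[0,\infty)$ because $\alpha-1\in(0,1)$, Jensen's inequality yields $\sum_i|N_i|^{\alpha-1}\le t((\lambda+2\eps)n)^{\alpha-1}$ where $\lambda:=2|E(R)|/t^2$. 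Hence dense regular and irregular pairs contribute at most $\tfrac12\rho n^\alpha(\lambda+2\eps)^{\alpha-1}+O(tn^\beta)$ to $|E(G)|$. The leftover contributions are easily absorbed into $\gamma$: edges inside clusters total $O(\rho\eps^{\alpha-1}n^\alpha)$ via the bipartite-subgraph upper bound, edges incident to $V_0$ at most $\zz{\eps n}{n}{\mathcal{F}}=O(\rho\eps n^\alpha)$, sparse regular pairs at most $dpn^2/2=dn^\alpha/2$, and $tn^\beta=o(n^\alpha)$ as soon as $n\ge n_0(\eps,d,T)$. Combined with the hypothesis and small choices of $\eps_0,d_0$ in terms of $\gamma$, this gives $(\lambda+2\eps)^{\alpha-1}\ge\mu^{\alpha-1}+\gamma/2$; if further $\lambda<\mu-\gamma$ and $\eps<\gamma/4$, then $\lambda+2\eps<\mu$ and strict monotonicity of $x^{\alpha-1}$ forces $(\lambda+2\eps)^{\alpha-1}<\mu^{\alpha-1}$, a contradiction.

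Part~\ref{degtrans2} follows from the same trick applied to a single cluster: assuming for contradiction that $V_i$ is in fewer than $\gamma t$ irregular pairs \emph{and} $d_R(V_i)<(\mu-\gamma)t$, the union $N_i$ of $V_j$ that are irregular or dense regular neighbors of $V_i$ has $|N_i|<\mu n$, so smoothness gives $e(V_i,N_i)\le\mu^{\alpha-1}\rho n^\alpha/t+O(n^\beta)$. Adding the routine contributions from sparse regular pairs ($\le dn^\alpha/t$), $V_0$ (at most $O(\rho\eps^{\alpha-1}n^\alpha/t)$), and edges inside $V_i$ ($O(\rho n^\alpha/t^\alpha)$) keeps the total strictly below $(\mu^{\alpha-1}+\gamma)\rho n^\alpha/t$ for appropriately small $\eps,d$, contradicting the hypothesis $e(V_i,V(G))\ge(\mu^{\alpha-1}+\gamma)\rho n^{\alpha-1}|V_i|$.

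The main obstacle is precisely the issue flagged at the start: pair-by-pair smoothness bounds summed over dense regular pairs carry a useless $t^{2-\alpha}$ blowup, and the argument only works because we recognize that smoothness still controls the much larger bipartite graph $G[V_i,N_i]$ and that Jensen's inequality then extracts the right dependence on $|E(R)|$. All remaining error terms are standard to absorb, provided the quantifiers are ordered as $\gamma\to\eps_0,d_0\to(\eps,d,T)\to n_0$.
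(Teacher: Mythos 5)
Your proposal is correct and follows essentially the same route as the paper: you collect the clusters $V_j$ that form irregular or dense-regular pairs with $V_i$ into a single set $N_i$ (the paper's $n_i = m|D_i| + m|I_i|$), apply smoothness once to the bipartite graph $G[V_i, N_i]$, and then use Jensen's inequality on $\sum_i |N_i|^{\alpha-1}$ to extract the linear dependence on $|E(R)|$ — this is precisely the paper's argument, stated contrapositively rather than directly. The error-term bookkeeping (intra-cluster edges, $V_0$, sparse regular pairs) matches, and you correctly identify both the $t^{2-\alpha}$ blowup that a naive pair-by-pair bound would incur and why aggregating neighborhoods circumvents it.
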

\begin{proof}
  We choose $d_0=\gamma\rho/2$,
  $\eps_0$ to be sufficiently small function of $\gamma, \rho, \alpha$ and
  we will require $n_0$ to be sufficiently large for all the calculations that follow.

  Let $n\ge n_0$, $0<d\le
  d_0$ and $0<\eps\le\eps_0$, and let $G$ be an $n$-vertex $\mathcal{F}$-free graph with
$(\mu^{\alpha-1}+\gamma) \rho p\frac{n^2}{2}$ edges. Let $R$ be an $(\eps,d,p)$-cluster
  graph for $G$ with $p = n^{\alpha - 2}$, as obtained from a partition
  $\mathcal{P}$ given by Lemma \ref{lem:SparseReg}, with clusters
  $V_1,V_2,\dots,V_t$ of size $m$, exceptional cluster $V_0$, and $1/\eps \leq t
  \leq T(\eps)$. For $i\in[t]$, let $I_i$ and $D_i$ respectively denote the set
  of irregular pairs including $V_i$ and the set of regular pairs of density at
  least $d$ including $V_i$. Thus $|D_i|$ is the degree of $V_i$ in $R$. Let
  $n_i = m|D_i| + m|I_i|$. Applying the smoothness of $\mathcal{F}$ to the
  bipartite subgraph $H$ of $G$ consisting of edges in pairs $(V_i,V_j) \in I_i
  \cup D_i$, we obtain \[ |E(H)| \leq \zz{m}{n_i}{\mathcal{F}} \leq \rho
  mn_i^{\alpha - 1} + O(n_i^{\beta}).\]
Since $\beta<\alpha$, provided $n$ is large enough, this is at
  most $(\rho + \eps)mn_i^{\alpha - 1}$. Therefore the total number of edges of
  $G$ meeting $V_i$ is at most
  \begin{equation}\label{eq:degtrans} e(V_i,V_0)+ e(V_i) + t \cdot dpm^2 + (\rho +
  \eps) m n_i^{\alpha - 1} \le \gamma \rho mn^{\alpha-1}+\rho m
  n_i^{\alpha-1}.\end{equation}
Here we used that $e(V_i) \leq 2\rho m^{\alpha}\le 2\rho m
  (\eps n)^{\alpha-1}$,  $e(V_i,V_0) \leq \rho m(\eps n)^{\alpha-1}+O((\eps
  n)^\beta)\le 2\rho m(\eps n)^{\alpha-1}$, $n$ is large enough, $\eps_0$ is sufficiently small,
$p=n^{\alpha-2}$ and $d \leq d_0=\gamma\rho/2$.

  Summing over all clusters, we obtain
  \[ 2|E(G)| \le \gamma \rho mt n^{\alpha-1} + \rho m \sum_{i = 1}^t n_i^{\alpha - 1}
  \le \gamma \rho n^\alpha +\rho mt\Big(\sum_{i = 1}^t
    \frac{n_i}{t}\Big)^{\alpha - 1}
  \] where the final part of the inequality is Jensen's inequality, using
  $\alpha-1<1$.
    On the other hand, $|E(G)| \geq (\mu^{\alpha-1}+\gamma)\rho
    p\frac{n^2}{2}=(\mu^{\alpha-1}+\gamma)\rho \frac{n^\alpha}{2}$. Putting these together and simplifying we have $ \mu n
    \leq \sum_{i = 1}^t \frac{n_i}{t}$,
    and thus
    \[\sum_{i=1}^t(|D_i|+|I_i|)\ge \mu tn/m\ge \mu t^2\,.\]
  Since $\mathcal{P}$ is a regular partition and $\eps = o(\gamma)$, the $|I_i|$ sum to at most $2\eps
  t^2<\gamma t^2$, and since the $|D_i|$ sum to $2|E(R)|$,
  part~\ref{degtrans1} follows.

  For part~\ref{degtrans2}, we have that the number of edges meeting $V_i$
  is at least $(\mu^{\alpha-1}+\gamma)\rho n^{\alpha-1}m$. This, together
  with~\eqref{eq:degtrans}, implies that $n_i\ge\mu n\ge\mu m t$. Thus
  $|D_i|+|I_i|\ge\mu t$, and so either $|I_i|\ge\sqrt{\eps}t$ or (since $\eps = o(\gamma)$)
  $|D_i|\ge(\mu-\gamma)t$, as desired.
\end{proof}

\section{Proof of Theorems~\ref{thm:ExtCycle} and~\ref{thm:kst}}

The extremal result which we transfer to the $\mathcal{F}$-free setting is
essentially the Stability Theorem of Simonovits~\cite{SimStab} for triangles.
However we require a slight strengthening: we forbid the existence of a vertex contained
in many triangles, rather than any triangle at all.

\begin{lem}\label{lem:TriStab}
  Given $0<\gamma<1/8$, let $G$ be an $n$-vertex graph with
  $e(G)\ge\big(\tfrac{1}{4}-\gamma\big)n^2$. Then either there is a vertex of
  $G$ contained in at least $\gamma n^2$ triangles, or we can delete at most
  $9\gamma^{1/4}n^2$ edges of $G$ to make it bipartite.
\end{lem}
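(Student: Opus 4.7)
We prove the contrapositive. Assume every vertex of $G$ lies in at most $\gamma n^2$ triangles, and deduce that $G$ is within $9\gamma^{1/4}n^2$ edges of being bipartite. Let $V(G) = A \sqcup B$ be a max cut and let $D := e(A)+e(B)$ denote the number of bad edges within parts; since $D$ is exactly the minimum number of edges whose deletion makes $G$ bipartite, the task reduces to showing $D \le 9\gamma^{1/4}n^2$. The crucial structural input is the max-cut property $d_B(v) \ge d_A(v)$ for every $v \in A$ (and symmetrically on $B$), which in particular forces $d_B(v) \ge d(v)/2$.

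Suppose for contradiction that $D > 9\gamma^{1/4}n^2$, and without loss of generality assume $e(A) \ge D/2$, so that $G[A]$ carries many edges. The aim is to count triangles of type $T_{AAB}$ (two vertices in $A$, one in $B$) produced by the bad edges of $E(A)$ together with the dense cross-edges between $A$ and $B$, and thereby locate a vertex lying in at least $\gamma n^2$ triangles. For every bad edge $uv \in E(A)$, each common $B$-neighbor of $u,v$ yields such a triangle, and inclusion-exclusion gives $|N_B(u)\cap N_B(v)| \ge d_B(u)+d_B(v)-|B|$. Summing over $E(A)$, using the identity $\sum_{uv\in E(A)}(d_B(u)+d_B(v)) = \sum_{u\in A}d_A(u)d_B(u)$, and the Cauchy-Schwarz chain $\sum_{u\in A} d_A(u)d_B(u) \ge \sum_u d_A(u)^2 \ge 4e(A)^2/|A|$ (the first step uses max cut), one obtains a lower bound on $T_{AAB}$ in terms of $e(A)$ and the cross-edge defect $|A|\,|B|-e(A,B) \le \gamma n^2 + D$, the latter estimate coming from $e(A,B) = e(G)-D \ge (1/4-\gamma)n^2 - D$.

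On the other hand the vertex cap $t_v \le \gamma n^2$ gives $\sum_{v\in A}t_v \le \gamma n^3$, and since every $T_{AAB}$-triangle contributes to $t_v$ for its two $A$-vertices, one has $2T_{AAB} \le \gamma n^3$. Setting the two bounds on $T_{AAB}$ against each other yields an inequality coupling $D$ and $\gamma$; after carefully accounting for the correction terms (vertices $v$ whose $d_B(v)$ is far from $|B|$ must be separately controlled via the observation that they account for only a small fraction of the deficit $|A|\,|B|-e(A,B)$), the inequality solves to $D \le 9\gamma^{1/4}n^2$, contradicting our assumption.

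The principal obstacle is controlling the correction terms contributed by edges $uv \in E(A)$ with $d_B(u)+d_B(v) < |B|$, where the simple inclusion-exclusion bound $d_B(u)+d_B(v)-|B|$ becomes vacuous. The resolution is a case split: either the number of such edges is small (so the remaining bad edges produce enough triangles on their own), or the vertices with low $B$-degree form a small exceptional set, which one bounds by distributing the deficit $|A|\,|B|-e(A,B) \le \gamma n^2 + D$ over them. Delicately balancing the threshold separating typical from exceptional vertices against both $\gamma n^2 + D$ and the target $d_A$-bound is what produces the fourth-root exponent in $9\gamma^{1/4}n^2$; a naive single application of Cauchy-Schwarz would only give $D \lesssim \gamma^{1/2}n^2$ under stronger hypotheses, and it is this refined bookkeeping that makes the lemma sharp.
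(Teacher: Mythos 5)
There is a genuine gap at the step you flag as the ``principal obstacle,'' and the resolution you sketch does not close it. Your plan is to control the edges $uv\in E(A)$ for which $d_B(u)+d_B(v)<|B|$ by thresholding $\bar d_B(u):=|B|-d_B(u)$ and invoking Markov on the deficit $\delta:=\sum_{u\in A}\bar d_B(u)=|A||B|-e(A,B)$. But the only upper bound available on $\delta$ is $\delta\le\gamma n^2+D$, and $D$ is precisely the quantity you have assumed (for contradiction) to exceed $9\gamma^{1/4}n^2$. This makes the Markov/threshold step circular in the critical regime. Concretely, fix a threshold $K$ and set $A_1=\{u\in A:\bar d_B(u)>K\}$, $A_0=A\setminus A_1$. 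Each edge inside $A_0$ gives at least $|B|-2K$ triangles, so one needs $|B|>2K$. The edge-mass you lose to $A_1$ is at most $\sum_{u\in A_1}d_A(u)\le(|B|-K)|A_1|\le(|B|-K)\delta/K$, and you need this to be noticeably smaller than $e(A)$. Writing $|B|/K=r$ (so $r>2$), this requires $e(A)>(r-1)\delta$, i.e.\ $r<1+e(A)/\delta$. But $e(A)\le D\le\delta+$ (lower-order) is perfectly consistent with all constraints (take $|A|\approx|B|\approx n/2$, $e(G)$ just above $(1/4-\gamma)n^2$, so $\delta\approx D+\gamma n^2$ and $e(A)\approx D/2$), and then $1+e(A)/\delta<2<r$: there is no admissible threshold, and the exceptional set can swallow all of $E(A)$. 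A single Cauchy--Schwarz refinement does not save it either: the lower bound $T_{AAB}\ge e(A)|B|-\sum_u d_A(u)\bar d_B(u)$ can be negative in exactly this regime, because $\sum_u d_A(u)\bar d_B(u)$ is of order $|B|\delta\approx|B|D$, which dominates $e(A)|B|\approx D|B|/2$.

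The paper avoids the problem by not using a max cut at all. Instead it takes $u$ a vertex of maximum degree, sets $X=N(u)$, $Y=V(G)\setminus X$, and observes $e(G)\le e(X)+|X||Y|$, so either $e(X)\ge\gamma n^2$ (then $u$ itself is in $\ge\gamma n^2$ triangles and you are done) or $|X|$ is within $\sqrt{2\gamma}\,n$ of $n/2$. If $e(Y)$ is also small you delete $E(X)\cup E(Y)$; otherwise the same reasoning is iterated with $v\in X$ of maximum $Y$-degree and $Z=N(v)\cap Y$, and the two resulting estimates on $|Z|$ (one upper, one lower) clash. The crucial structural advantage is that in the branch where you continue, the ``deficits'' that appear are bounded by $\gamma n^2$ alone --- not by $\gamma n^2$ plus the quantity being controlled --- because you have already disposed of the large-$e(X)$ case. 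If you want to proceed along a max-cut route you would need an additional idea to break the dependence of $\delta$ on $D$; as written, the argument does not go through.
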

\begin{proof}
Let $u$ be a vertex of maximum degree in $G$, $X$ be its neighborhood and let $Y=V(G)\setminus X$.
By definition of $u$, every vertex in $Y$ is incident with at most $|X|$ edges. Therefore the number of edges in $G$ satisfies
\[\big(\tfrac{1}{4}-\gamma\big)n^2\le e(G)\le
e(X)+|Y||X|=e(X)+(n-|X|)|X|.\]
Thus either $e(X)\ge \gamma n^2$, in which case we are done, or we have
$\big(\tfrac{1}{2}-\sqrt{2\gamma}\big)n\le |X|\le \big(\tfrac{1}{2}+\sqrt{2\gamma}\big)n$.

  If $e(Y)\le (9\gamma^{1/4}-\gamma)n^2$, then $G[X,Y]$ is the desired large
  bipartite subgraph of $G$. So suppose this is not the case. Let $v$ be a
  vertex of $X$ with maximum degree in $Y$, and $Z$ its neighborhood in $Y$. Either $e(Z)\ge \gamma n^2$, in which case we are done,
  or we have
  \[(9\gamma^{1/4}-\gamma)n^2\le e(Y)\le \binom{|Y|}{2}-\binom{|Z|}{2}+\gamma
  n^2.\]
From this, using that $|Y|+|Z| \leq 2|Y|\leq (1+2\sqrt{2\gamma})n$, we obtain
  \[(18\gamma^{1/4}-4\gamma)n^2\le |Y|(|Y|-1)-|Z|(|Z|-1)\le |Y|^2-|Z|^2\le
  \big(1+2\sqrt{2\gamma}\big)n(|Y|-|Z|).\]
Therefore $|Y|-|Z|\ge (18\gamma^{1/4}-4\gamma)n/2\ge 7\gamma^{1/4}n$. Since
we
  have $|Y|\le \big(\tfrac{1}{2}+\sqrt{2\gamma}\big)n$, we have
  \begin{equation}\label{eq:findZ}
    |Z|\le\big(\tfrac{1}{2}-5\gamma^{1/4}\big)n\,.
  \end{equation}
On the other hand, since
  no vertex of $X$ has more than $|Z|$ neighbors in $Y$, we have \[\big(\tfrac{1}{4}-\gamma\big)n^2\le e(G)\le e(X)+|X||Z|+e(Y)\le 2\gamma
  n^2+|X||Z|+\binom{|Y|}{2}-\binom{|Z|}{2}.\]
Together with  $|X|, |Y| \leq \big(\tfrac{1}{2}+\sqrt{2\gamma}\big)n$ and $|Z| \leq n$, this implies that
  \[\big(\tfrac{1}{4}-3\gamma\big)n^2\le
  \big(\tfrac{1}{2}+\sqrt{2\gamma}\big)n|Z|+\frac{\big(\tfrac{1}{2}+\sqrt{2\gamma}\big)^2n^2}{2}-\frac{|Z|^2}{2}
\leq \sqrt{2\gamma}n^2+\frac{\big(\tfrac{1}{2}+\sqrt{2\gamma}\big)^2n^2}{2}+\frac{(n-|Z|)|Z|}{2}
.\]
Simplifying, we get $\big(\tfrac{1}{4}-3\sqrt{2\gamma}-8\gamma\big)n^2\le
  \big(n-|Z|\big)|Z|$. Finally, this implies
  \[|Z|\ge
  \big(\tfrac{1}{2}-\sqrt{3\sqrt{2\gamma}+8\gamma}\big)n\ge\big(\tfrac{1}{2}-4\gamma^{1/4}\big)n\,,\]
  which contradicts~\eqref{eq:findZ} and completes the proof.
\end{proof}

We will in fact prove slightly more than the statement of
Theorem~\ref{thm:ExtCycle}: we will show that any family of
$\mathcal{F}\cup\{C_k\}$-free graphs in which the number of edges is
asymptotically extremal, is near-bipartite (rather than just proving it for the
extremal graphs themselves).

\begin{proof}[Proof of Theorem~\ref{thm:ExtCycle}]
Let $\mathcal{F}$ be an $(\alpha,\beta)$-smooth family and let $p=n^{\alpha-2}$.
By definition, we have that the maximum possible number of edges in a $\mathcal{F}$-free bipartite graph on $n$ vertices
is at most $(1+o(1))\rho (n/2)^\alpha=\big((1/2)^{\alpha-1}+o(1)\big)\rho p \frac{n^2}{2}$. Let
$\gamma>0$ be an arbitrarily small but fixed constant and let $\eps_0 \leq \gamma$ and $d\le d_0=\gamma\rho/2$ be sufficiently small for
  Lemma~\ref{lem:degtransfer}. Let $\eps\le\eps_0$ be sufficiently small for Lemma~\ref{lem:embed} with
  \[\tau:=\tfrac{1}{2}-\tfrac{1}{2}\big(1-2^\alpha
  \gamma\big)^{1/(\alpha-1)}+\gamma\,.\]
  Note that as $\gamma>0$ tends to zero, by definition $\tau>0$ also tends to
  zero. Let $T$ be the constant returned by Lemma~\ref{lem:SparseReg},
  and $n_0$ be large enough for Lemmas~\ref{lem:embed}
  and~\ref{lem:degtransfer}.

  Let $n\ge n_0$, and $G$ be an $n$-vertex $\mathcal{F}$-free graph with
  \[e(G)\ge
  \Big(\big(\tfrac{1}{2}\big)^{\alpha-1}-\gamma\Big)\rho
  p\frac{n^2}{2} = \Big(\big(\tfrac{1}{2}-\tau+\gamma\big)^{ \alpha-1 }
  +\gamma\Big)\rho  p\frac{n^2}{2}\,,\]
  where the equality comes from the definition of $\tau$.

  By Lemma~\ref{lem:SparseReg} there is a sparse-regular partition of $G$ with a $t$-vertex $(\eps,d,p)$-cluster graph $R$,
where $\eps^{-1} \le t \le T$.
 By Lemma~\ref{lem:degtransfer} (with $\mu=1/2-\tau+\gamma$) we have $e(R)\ge \big(\tfrac{1}{2}-\tau\big)\frac{t^2}{2}\geq \big(\tfrac{1}{4}-\tau\big)t^2$.
  If $R$ has a vertex in $\tau t^2$ triangles, then by Lemma~\ref{lem:embed} $G$ contains $C_k$ for each $k\geq k_0=2\ell_0+5$ with
$\ell_0$  specified in the statement of Lemma~\ref{lem:embed}, and we are done. If
  not, then by Lemma~\ref{lem:TriStab} $R$ has a subgraph $R'$ with at most
  $9\tau^{1/4}t^2$ edges such that $E(R)\setminus E(R')$ is bipartite.

Let $G'$ be the spanning subgraph of $G$ consisting of edges belonging to pairs in $E(R')$, pairs with density less than $d$, irregular pairs and edges inside partition classes.
Note that the $(\eps,p)$-regular partition of $G$ which gives the $(\eps,d,p)$-cluster graph $R$ is also
an $(\eps,p)$-regular partition of $G'$, and it gives the graph $R'$ as an $(\eps,d,p)$-cluster graph for $G'$.
Since $|E(R')| \leq 9\tau^{1/4}t^2$, it follows by Lemma~\ref{lem:degtransfer}, with $\mu=9\tau^{1/4}+\gamma$, that we
have $e(G') \leq \big((9\tau^{1/4}+\gamma)^{\alpha-1}+\gamma\big)\rho p \binom{n}{2}$. Since $\tau\rightarrow 0$ as
$\gamma\rightarrow 0$, this shows that $e(G')=o\big(e(G)\big)$ and since the graph with edges $E(G)\setminus E(G')$ is bipartite,
this proves that $G$ is near-bipartite.
\end{proof}

The proof of Theorem~\ref{thm:kst} is identical except that
Lemma~\ref{lem:kstembed} replaces Lemma~\ref{lem:embed}.

\section{Proof of Theorem \ref{thm:AES}}

We have shown that the extremal graphs for forbidding
$\mathcal{F}$ and $C_k$, for any smooth $\mathcal{F}$, are near-bipartite.
In this section we prove Theorem \ref{thm:AES}, which says indeed that extremal graphs are actually bipartite provided they have large enough minimum degree and sufficiently many vertices.

\begin{proof}[Sketch of proof]
Write $a(n) \ll b(n)$ for $a(n) = o(b(n))$ as $n \rightarrow \infty$. 
 Fix constants $0< \eps \ll \eps' \ll d \ll \tau\ll \gamma \ll 1$ so that all inequalities
required below are satisfied (for the purposes of this sketch we will not specify the constants).
 Let $\ell_0=\ell_0(\alpha, \beta)$ be the constant defined in Lemma \ref{lem:embed}.
 Let $k_0 = 2\ell_0+3$ and suppose $k \ge k_0$ is odd.
 We assume $n$ to be sufficiently large given the preceding constants, and set $p=n^{\alpha-2}$.
 Let $G$ be an $n$-vertex $\mathcal{F}$-free graph with minimum degree at least
 $\rho\big(\tfrac{2n}{5}+\gamma n\big)^{\alpha-1}$. Let $R$ be the associated
 $t$-vertex $(\eps,d,p)$-cluster graph generated by Lemma~\ref{lem:SparseReg}.
 By Lemma~\ref{lem:degtransfer}, all but at most $\eps' t$ clusters of $R$ have degree at least $(2/5+\gamma/2)t$.
Also, one can show that there are at most $\eps' t$ clusters that are `bad', in that they contain more than $\eps' n/t$ vertices that are incident to more than $2dpn$ edges in pairs that are irregular or have relative density less than $d$.
 Thus we may restrict to a subgraph $R'$ of $R$ with at least $(1-2\eps')t$ clusters, all of which are good,
 such that the minimum degree in $R'$ is at least $(2/5+\gamma/4)t$. Now Lemma 9 in~\cite{AESgen}, which is a
`supersaturated version' of the Andr\'asfai-Erd\H{o}s-S\'os theorem, guarantees
that either there is a vertex of $R'$ contained in $\tau t^2$ triangles,
or there is a partition $(\mathcal{X},\mathcal{Y})$ of $V(R')$ such that for each vertex of $R'$,
all but $\gamma t/16$ of its incident edges cross $(\mathcal{X},\mathcal{Y})$.
In the former case the proof is similar to that of Theorem \ref{thm:ExtCycle},
so we may suppose we are in the latter case.
Also note that $2t/5 \leq |\mathcal{X}|, |\mathcal{Y}| \leq 3t/5$.

Let $(X,Y)$ be a maximal cut of $G$ obtained from the partition
$(\bigcup\mathcal{X},V(G)\setminus\bigcup\mathcal{X})$ by successively moving
vertices with a majority of neighbors in their own partition class to the
other partition class. Using regularity and the fact that all clusters in $R'$ are good,
one can check that at most a $2\eps'$-fraction of any cluster in $V(R')$ will be moved in this
process. By maximality, each vertex of $G$ is incident to at least
$\frac{1}{2}\rho\big(\tfrac{2n}{5}\big)^{\alpha-1}$ edges crossing $(X,Y)$.

Since $G$ is not bipartite, suppose without loss of generality that $uv$ is an
edge in $X$. Let $N_1$ and $N'_1$ be disjoint, equal-sized sets of neighbors of
respectively $u$ and $v$ in $Y$. Let $N_2$ and $N'_2$ be disjoint, equal-sized
sets of neighbors of respectively $N_1$ and $N'_1$ in $X\setminus{u,v}$, and so
on. Using Lemma~\ref{lem:expand} and similar computation as in the proof of Lemma \ref{lem:embed},
it is not hard to check that it is possible to
choose these sets in such a way that after $\ell=\ell_0+1$ steps we have sets
$N_\ell$ and $N'_\ell$ which are both of size $\tau n$ and are disjoint.
Let $\mathcal{C}$, $\mathcal{C}'$ respectively be the set of clusters $C$
that contain at least $\tau |C|/2$ vertices of $N_\ell$, $N'_\ell$.
Note that $|\mathcal{C}| \ge (|N_\ell|-\tau n/2)/(n/t)= \tau t/2$,
and similarly $|\mathcal{C}'| \ge \tau t/2$.
Fix any $C \in \mathcal{C}$.
Since each vertex of $R'$ has at most  $\gamma t/16$ neighbors in its own part,
for any $C' \in \mathcal{C}$ there are at least
$(2/5+3\gamma/16)t + (2/5+3\gamma/16)t - 3t/5 \ge t/5$
clusters that are common neighbors of $C$ and $C'$.
Since $|\mathcal{C}'| \ge \tau t/2$, a simple double counting
argument shows that in particular there is a cluster $D$ of $R'$ which is
adjacent both to $C$ and to a set $\mc{D}$ of at least $\tau t/10$ clusters of $\mathcal{C}'$.
Let $Z$ be obtained from $N'_\ell \cap \cup_{C' \in \mc{D}} C'$
by deleting all vertices with degree less than $dn/4t$ in $D$.
Then $|Z| \ge \tau^3 n$.
Applying Lemma~\ref{lem:expand} part 2 between $Z$ and $D$, we reach a set $N'_{\ell+1}$ in
$D$ disjoint from all previous $N_i$ and $N'_i$, and of size exceeding $\eps |D|$.
By $(\eps,p)$-regularity of the pair $(C,D)$, there is a path between
$N_\ell \cap C$ and $N'_{\ell+1}$ of length $k-2\ell-2$.
By construction, this can be extended to a $k$-cycle in $G$.
\end{proof}

The value of $k_0$ in this theorem is similar in size to that in
Theorem~\ref{thm:ExtCycle}. In the case of the complete bipartite graphs
$K_{2,t}$ and $K_{3,3}$ (with
more care in the argument) we
can have $k_0=5$. Finally, if we impose the further
condition $e(G)\sim \rho(n/2)^\alpha$, then we can reduce the minimum degree to
$o(\rho p n)$ (in this case Theorem~\ref{thm:ExtCycle} shows that the cluster
graph of $G$ is almost bipartite). This reduces the problem of showing that
extremal
$n$-vertex $\mathcal{F}\cup C_k$-free graphs are bipartite to that of showing
that they do not have vertices of degree much smaller than the average.
Unfortunately, even for $\mathcal{F}=\{C_4\}$ this is not known for most values
of $n$.

\section{A new smooth family}

Let $B_t$ be the book with $t$ pages: that is, $t$ copies of $K_{2,2}$ sharing a
common edge (and no other vertices). In this section, we give an example
of an application of Theorem \ref{thm:ExtCycle} which determines the asymptotic
behavior of the Tur\'{a}n numbers $\ex{n}{\{K_{2,t},B_t,C_k\}}$ for $k \geq 9$.
For the proof, we require one result on counting $4$-cycles
in graphs, which will be used to show $\{K_{2,t},B_t\}$ is a smooth family.

\begin{lem}\label{lem:Supersat2}
Let $G$ be a bipartite graph with parts $U$ and $V$ of sizes $m$ and $n$ and
$e$ edges, and suppose that $e(e - n) \geq nm(m-1)/2$. Then the number of
$4$-cycles in $G$ is at least
\[ \frac{e^2(e-n)^2-e(e-n)nm(m-1)}{4n^2m(m-1)}\,.\]
\end{lem}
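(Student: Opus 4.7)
The plan is to count $4$-cycles via two applications of the convexity of $\binom{x}{2}=x(x-1)/2$. Parameterise $C_4$'s by their pair of $U$-vertices: if $c(u_1,u_2)=|N(u_1)\cap N(u_2)|$ denotes the codegree of $\{u_1,u_2\}\subset U$, then the number of $C_4$'s in $G$ equals
\[\sum_{\{u_1,u_2\}\subset U}\binom{c(u_1,u_2)}{2}.\]

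First I would bound the average codegree $\bar c:=\frac{1}{\binom{m}{2}}\sum_{\{u_1,u_2\}\subset U}c(u_1,u_2)$ from below. By the standard double counting identity
\[\sum_{\{u_1,u_2\}\subset U}c(u_1,u_2)=\sum_{v\in V}\binom{d(v)}{2},\]
together with convexity of $\binom{x}{2}$ applied to the $n$ degrees $d(v)$, whose sum is $e$, one obtains $\sum_v\binom{d(v)}{2}\ge n\binom{e/n}{2}=\frac{e(e-n)}{2n}$, and hence $\bar c\ge L:=\frac{e(e-n)}{nm(m-1)}$. A second application of convexity then gives
\[\sum_{\{u_1,u_2\}\subset U}\binom{c(u_1,u_2)}{2}\ge\binom{m}{2}\binom{\bar c}{2}.\]

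To finish I would substitute the lower bound $L$ for $\bar c$ in this last inequality and simplify; this yields $\frac{m(m-1)}{4}L(L-1)=\frac{e^2(e-n)^2-e(e-n)nm(m-1)}{4n^2m(m-1)}$, which is exactly the claimed bound. The only subtle step is this substitution: $\binom{x}{2}$ is monotone increasing only on $[1/2,\infty)$, so to legitimately replace $\bar c$ by $L$ one needs $\bar c\ge L\ge 1/2$. But the hypothesis $e(e-n)\ge nm(m-1)/2$ is precisely the statement $L\ge 1/2$, and thus the substitution is valid. When the hypothesis fails the claimed bound is negative and the conclusion is trivial.

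Consequently there is no real obstacle here: the proof is a standard double Jensen computation together with the codegree identity, and the role of the hypothesis in the statement is exactly to put us into the monotone region of $\binom{x}{2}$ so that the two convexity steps can be chained.
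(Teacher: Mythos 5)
Your proof is correct and essentially identical to the paper's: both use the codegree identity $\sum_{\{u_1,u_2\}\subset U}c(u_1,u_2)=\sum_{v\in V}\binom{d(v)}{2}$, apply Jensen twice, and invoke the hypothesis $e(e-n)\ge nm(m-1)/2$ precisely to justify substituting the lower bound into $\binom{x}{2}$ on its monotone range (the paper phrases this as $N\ge m(m-1)/4$ making $N^2/m(m-1)-N/2$ non-decreasing, which is the same condition as your $\bar c\ge L\ge 1/2$ after the change of variable $\bar c=2N/m(m-1)$).
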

\begin{proof}
Let $N := \sum_{v \in V} {d(v) \choose 2}$.
Note that $N = \sum_{\{u,v\} \subset U} d(u,v)$,
where $d(u,v)$ is the number of common neighbors of $u$ and $v$.
The number of $4$-cycles in $G$ is exactly
$\sum_{\{u,v\} \subset U} {d(u,v) \choose 2}$.
By Jensen's inequality, this is at least
\[ \frac{m(m-1)}{2} {\frac{2}{m(m-1)} \sum_{\{u,v\} \subset U} d(u,v) \choose
2} = \frac{m(m-1)}{2} {\frac{2N}{m(m-1)} \choose 2} = \frac{N^2}{m(m-1)} -
\frac{N}{2}\,.\]
Applying Jensen's inequality a second time, we have
\[N\ge n\binom{e/n}{2}=\frac{e(e-n)}{2n}.\]
Since $e(e - n) \geq nm(m-1)/2$,
we deduce $N \geq m(m-1)/4$. For these values of $N$,
$N^2/m(m-1) - N/2$ is non-decreasing, and therefore the number of
$4$-cycles in $G$ is at least
\[ \frac{N^2}{m(m-1)} - \frac{N}{2} \geq
\frac{e^2(e-n)^2-e(e-n)nm(m-1)}{4n^2m(m-1)}\,.\]
\end{proof}

Now we show that $\{K_{2,t},B_t\}$ is smooth.

\begin{thm}\label{thm:booksmooth} For each $t\ge 2$, the family
$\{K_{2,t},B_t\}$ is $(\tfrac{3}{2},1)$-smooth. Consequently, for $k \geq 9$,
\[ \ex{n}{\{K_{2,t},B_t,C_k\}} \sim \z{n}{K_{2,t},B_t} \sim (n/2)^{3/2}.\]
\end{thm}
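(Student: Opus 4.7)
My plan is to reduce Theorem~\ref{thm:booksmooth} to Theorem~\ref{thm:ExtCycle} by showing that $\{K_{2,t},B_t\}$ is $(3/2,1)$-smooth with relative density $\rho=1$. Once this is established, Theorem~\ref{thm:ExtCycle} (applied with $\alpha=3/2$, $\beta=1$, so that $\ell_0=\lfloor 1/(\alpha-1)\rfloor=2$ and $k_0=2\ell_0+5=9$) yields $\ex{n}{\{K_{2,t},B_t,C_k\}}\sim\z{n}{\{K_{2,t},B_t\}}$ for every odd $k\ge 9$. Maximizing $m(n-m)^{1/2}$ over $m\le n/2$ is attained at $m=n/2$, producing $(n/2)^{3/2}$, which gives the asymptotic for $\z{n}{\{K_{2,t},B_t\}}$.

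For the upper bound in the smoothness claim I would first show that in any bipartite $\{K_{2,t},B_t\}$-free graph $G$ each edge $xy$ lies in at most $(t-1)^2$ four-cycles. The $C_4$'s through $xy$ correspond bijectively to the edges of the bipartite link graph $H_{xy}$ between $N(y)\setminus\{x\}$ and $N(x)\setminus\{y\}$: $B_t$-freeness is precisely the statement that $H_{xy}$ has no matching of size $t$, so by K\"onig's theorem $H_{xy}$ has a vertex cover of size at most $t-1$, while $K_{2,t}$-freeness forces the degree of every vertex of $H_{xy}$ to be at most $t-2$ there (the degree of $a\in N(y)\setminus\{x\}$ equals $|N(a)\cap N(x)|-1\le t-2$, since $y$ itself is a common neighbour of $a$ and $x$). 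Hence $|E(H_{xy})|\le(t-1)(t-2)\le(t-1)^2$, and summing over edges gives $|C_4(G)|\le(t-1)^2e(G)/4$.

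Combining this with the supersaturation bound of Lemma~\ref{lem:Supersat2} (applicable once $e(e-n)\ge nm(m-1)/2$) produces the inequality
\[(e-n)\bigl[e(e-n)-nm(m-1)\bigr]\le (t-1)^2n^2m(m-1).\]
Writing $e=m\sqrt n+g$, direct expansion shows that in the regime $m\ge 2\sqrt n$ with $g\ge n$ the left-hand side is at least $(2g-n)m^2n$, which forces $g\le t^2n$; in the complementary regime $m\le 2\sqrt n$ the F\"uredi form of the K\H{o}v\'ari--S\'os--Tur\'an bound for $K_{2,t}$ already yields $e=O(n)$, and trivially $m\sqrt n=O(n)$ here as well. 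Either way $\zz{m}{n}{\{K_{2,t},B_t\}}\le m\sqrt n+O(n)$. For the matching lower bound I would use the point--line incidence graph of the projective plane $\mathrm{PG}(2,q)$ for a prime power $q\sim\sqrt n$ with $q^2+q+1\le n$ (available from standard prime gap estimates): this graph has girth six and hence is $\{K_{2,t},B_t\}$-free, and restricting to any $m$ of the points while padding the line side with isolated vertices produces a bipartite graph on $(m,n)$ with $m(q+1)=m\sqrt n+O(n)$ edges. This establishes $(3/2,1)$-smoothness with $\rho=1$.

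The main difficulty is extracting the precise leading constant $\rho=1$ in the upper bound. The $K_{2,t}$-free condition by itself only yields $e\le(1+o(1))\sqrt{t-1}\,m\sqrt n$ (a constant that is sharp up to lower order for $K_{2,t}$-free bipartite graphs, via the constructions of Brown and F\"uredi), and the $B_t$-free hypothesis is essential to drive the constant down to $1$. This is accomplished through the $C_4$-supersaturation mechanism above: any edge count exceeding $m\sqrt n$ by more than $O_t(n)$ would create more $C_4$'s than the per-edge cap of $(t-1)^2$ allows. Verifying the hypothesis of Lemma~\ref{lem:Supersat2} and handling the regime $m\approx\sqrt n$ by a separate K\H{o}v\'ari--S\'os--Tur\'an-type argument is routine but must be done with care so that the error term is genuinely $O(n)$.
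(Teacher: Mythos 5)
Your proposal is correct and follows essentially the same route as the paper: bound $e(H_{xy})$ for each edge $xy$ using $K_{2,t}$- and $B_t$-freeness, compare the resulting $O(t^2)e(G)$ bound on the number of $4$-cycles against the Jensen-type lower bound of Lemma~\ref{lem:Supersat2}, and then feed the $(3/2,1)$-smoothness into Theorem~\ref{thm:ExtCycle} with $\ell_0=2$ to get $k_0=9$. The one cosmetic difference is that you invoke K\"onig's theorem to bound $e(H_{xy})\le(t-1)(t-2)$ via a vertex cover of size $t-1$, whereas the paper bounds it by $2(t-1)(t-2)$ by observing that the at most $2(t-1)$ matched vertices form a cover; both give the same order and the same final theorem.
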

\begin{proof}
The asymptotic formula will follow from Theorem \ref{thm:ExtCycle} and
the known lower bound on the number of edges in a $K_{2,2}$-free bipartite graph, namely
\[ \z{n}{\{K_{2,t},B_t\}} \geq \z{n}{K_{2,2}} \geq \bigl(\tfrac{n}{2}\bigr)^{3/2}-O(n).\]
Next we require an upper bound on $\zz{m}{n}{\{K_{2,t},B_t\}}$. We shall show
\[ \zz{m}{n}{\{K_{2,t},B_t\}} < mn^{1/2} + 4t^2 n.\]
Let $G$ be a $\{K_{2,t},B_t\}$-free bipartite graph with parts $X$ and $Y$ of
sizes $m$ and $n$, where $m\le n$. Suppose for a contradiction, that $G$ has at least $mn^{1/2} + 4t^2 n$ edges.
We may delete edges of $G$ to obtain a subgraph $H$ of $G$ with exactly $e := mn^{1/2} + 4t^2 n$ edges. Clearly, $H$ is also $\{K_{2,t},B_t\}$-free.
Let $xy$ be any edge of $H$, and consider
the subgraph $H_{xy}$ of $H$ induced by
$N(x)\setminus\{y\},N(y)\setminus\{x\}$. If this subgraph has a
vertex of degree $t-1$, then $H$ contains $K_{2,t}$; if it has a matching of
$t$ edges, then $H$ contains $B_t$. It follows that since $H$ is
$\{K_{2,t},B_t\}$-free, then $H_{xy}$ has at most $2(t-1)(t-2)$ edges, as a
maximum matching has at most $2(t-1)$ vertices each incident to at most $t-2$
edges. Now the number of $4$-cycles in $H$ is precisely
$\tfrac{1}{4}\sum_{xy} e(H_{xy})$, which is thus at most $(t-1)(t-2)e(H)/2$.
Since
\[ e(e - n) = (mn^{1/2} + 4t^2n)(mn^{1/2} + (4t^2 - 1)n) \geq (mn^{1/2})^2 = nm^2\]
we may apply Lemma~\ref{lem:Supersat2} to conclude that the number of $4$-cycles in $H$ is
at least
\[\frac{(mn^{1/2}+3t^2n)^4-(mn^{1/2}+4t^2n)^2nm^2}{4n^2m^2}\ge
t^2mn^{1/2}+2t^4n > \frac{t^2}{2}(mn^{1/2} + 4t^2 n)> (t - 1)(t - 2)\frac{e(H)}{2}, \]
which is a contradiction. This shows that $\{K_{2,t},B_t\}$ is $(\tfrac{3}{2},1)$-smooth.
Hence we can use Theorem \ref{thm:ExtCycle} (in the form it was proved in Section 5) with $k_0=2\ell_0+5$ and $\ell_0=\lfloor1/(\alpha-1)\rfloor=2$
to complete the proof.
\end{proof}

\bigskip

We remark that one could use the methods of the above proof to give new asymptotics for Tur\'{a}n numbers of various other families of bipartite graphs plus an odd cycle.

\section{Proof of Theorem \ref{thm:k23}}

F\"{u}redi~\cite{F2} showed that
\[ \zz{n}{n}{K_{2,3}} \sim \sqrt{2}n^{3/2}.\]
In this section, we describe a construction of a tripartite graph $G_q$ with no triangle or $K_{2,3}$
having $n=3q^2$ vertices and
\[ e(G_q) = 3q^2(q-1) = \frac{1}{\sqrt{3}}n^{3/2}-n,\]
for any prime $q = 2$ mod $3$.
This construction is denser than any possible bipartite construction by a multiplicative factor of $2/\sqrt{3}$,
as for $n$ even, $\zz{n/2}{n/2}{K_{2,3}} \sim \frac{1}{2}n^{3/2}$, and
so Theorem \ref{thm:k23} is proved for $t = 1$.
Note that the exact value $\zz{n}{n}{K_{2,3}}$ is known only for finitely many
$n$.
We also remark that the purpose of choosing $q = 2$ mod $3$ is to make $-3$ a non-residue mod $q$.
This follows from the law of quadratic reciprocity, which states that if $p$ and $q$
are odd primes, and we define $p^*$ to equal $p$ if $p = 1$ mod $4$
or $-p$ if $p = 3$ mod $4$, then $q$ is a quadratic residue mod $p$
if and only if $p^*$ is a quadratic residue mod $q$. Applying this with $p=3$
we see that $-3$ is a quadratic residue mod $q$ if and only if $q$ is a quadratic
residue mod $3$, i.e. $q = 1$ mod $3$.

\medskip

{\bf Construction of $G_q$.} Let $G_q$ have parts $A_1$, $A_2$ and $A_3$ which are copies of $\mb{F}_q \times \mb{F}_q$.
Join $(x_1,x_2) \in A_i$ to $(y_1,y_2) \in A_{i+1}$ (where $A_4:=A_1$)
if $(x_1,x_2)-(y_1,y_2)=(a,a^2)$ for some $a \in \mb{F}_q^*$.

Clearly $G_q$ has $3q^2$ vertices and $3q^2(q-1)$ edges.

\medskip

{\bf Claim 1.} {\it $G_q$ is triangle-free.}

{\it Proof of Claim 1.} Suppose for a contradiction that we have a triangle in $G_q$, with vertices
$(x_1,x_2) \in A_1$, $(y_1,y_2) \in A_2$ and $(z_1,z_2) \in A_3$. Then
$(x_1,x_2)-(y_1,y_2)=(a,a^2)$, $(y_1,y_2)-(z_1,z_2)=(b,b^2)$, $(z_1,z_2)-(x_1,x_2)=(c,c^2)$
for some $a,b,c \in \mb{F}_q^*$.
It follows that $a+b+c=0$ and $a^2+b^2+c^2=0$, so
\[ 0 = 2(a^2+b^2+c^2)-(a+b+c)^2=(a-b)^2+c^2-2(a+b)c=(a-b)^2+3c^2.\]
This implies that $-3 = \left( \frac{a-b}{c} \right)^2$ is a quadratic residue.
But this contradicts our assumption that $q = 2$ mod $3$, so $G_q$ is triangle-free.

\medskip

{\bf Claim 2.} {\it $G_q$ is $K_{2,3}$-free.}

\medskip

{\it Proof of Claim 2.} First we claim that each bipartite graph $G_q(A_i,A_{i+1})$
is $C_4$-free. If not, then we have distinct vertices
\[ (x_1,x_2), (x'_1,x'_2) \in A_i \quad  (y_1,y_2), (y'_1,y'_2) \in A_{i+1}\]
and $a,b,c,d \in \mb{F}_q^*$ such that
\begin{eqnarray*}
(x_1,x_2)-(y_1,y_2)=(a,a^2) &\quad& (x'_1,x'_2)-(y'_1,y'_2)=(b,b^2) \\
(x_1,x_2)-(y'_1,y'_2)=(c,c^2) &\quad& (x'_1,x'_2)-(y_1,y_2)=(d,d^2).
\end{eqnarray*}
Then $a+b=c+d$ and $a^2+b^2=c^2+d^2$,
so
\[ (a-b)^2 = 2(a^2+b^2)-(a+b)^2 = 2(c^2+d^2)-(c+d)^2 = (c-d)^2.\]
Now we either have $a-b=c-d$, which gives $a=c$ and $b=d$,
or $a-b=d-c$, which gives $a=d$ and $b=c$. Either way we contradict the fact that we chose
distinct vertices above, so $G_q(A_i,A_{i+1})$ is a $C_4$-free graph.
It follows that any pair of vertices in a given $A_i$ has at most one common neighbor
in each $A_j$ with $j \ne i$. Thus in any potential $K_{2,3} \subset G_q$ the part of $K_{2,3}$ of size $2$ must
have vertices in different parts of $G_q$. Suppose, without loss of generality, that we have a vertex in $A_0$
and a vertex in $A_1$ that have $3$ common neighbors in $A_2$. Then we obtain
distinct $a,c,e \in \mb{F}_q^*$ and distinct $b,d,f \in \mb{F}_q^*$ such that
$a+b=c+d=e+f$ and $a^2+b^2=c^2+d^2=e^2+f^2$. As above, this implies
$(a-b)^2=(c-d)^2=(e-f)^2$. On taking square roots, each of the three terms in the
identity have two possible signs. We can assume without loss of generality that the
first two terms get the same sign, i.e. $a-b=c-d$. This gives $a=c$ and $b=d$,
which contradicts distinctness. Therefore $G_q$ is $K_{2,3}$-free.

\subsection{Construction of $K_{2,2t+1}$-free graphs}

Here we generalize the construction in the previous subsection. For any $t \ge 1$ and
certain primes $q$ we construct $(t+2)$-partite graphs $G_{q,t}$ with no triangle or $K_{2,2t+1}$
having $n=(t+2)q^2$ vertices and
\[ e(G_{q,t}) = \binom{t+2}{2}q^2(q-1) = \frac{t+1}{2\sqrt{t+2}} n^{3/2} - \frac{1}{2}(t+1)n.\]
Since F\"{u}redi~\cite{F2} showed that
\[ \zz{n/2}{n/2}{K_{2,2t+1}} \le \frac{\sqrt{t}}{2}n^{3/2}+n/4\]
this construction is denser than any bipartite construction by an asymptotic multiplicative factor of
$(t + 1)/\sqrt{t(t + 2)}$, as required for Theorem \ref{thm:k23}.

\medskip

{\bf Construction of $G_{q,t}$.}  Fix a prime $q$ and `multipliers' $m_{i,j} \in \mb{F}_q$
for $1 \le i<j \le t+2$ satisfying certain conditions to be described below.
Let $G_{q,t}$ have parts $A_1$, $\dots$, $A_{t+2}$ which are copies of $\mb{F}_q \times \mb{F}_q$.
For $i<j$, join $(x_1,x_2) \in A_i$ to $(y_1,y_2) \in A_j$
if $(x_1,x_2)-(y_1,y_2)=m_{i,j}(c,c^2)$ for some $c \in \mb{F}_q^*$.

Clearly $G_{q,t}$ has $(t+2)q^2$ vertices and $\binom{t+2}{2}q^2(q-1)$ edges.
We will derive conditions on the $m_{i,j}$ which ensure that $G_{q,t}$
has no triangle or $K_{2,2t+1}$, and then show that the conditions can be satisfied.
We adopt the convention that $m_{i,j}=-m_{j,i}$ when $1 \le j<i \le t+2$.

\medskip

{\bf Claim 1.} {\it $G_{q,t}$ is triangle-free if for all distinct $i,j,k$,
\[ m_{i,j,k}:=-m_{i,j}m_{j,k}m_{k,i}(m_{i,j}+m_{j,k}+m_{k,i})\]
is a quadratic non-residue in $\mathbb F_q$.
}

\medskip

{\it Proof of Claim 1.} Suppose $G_{q,t}$ has a triangle with vertices
$(x_1,x_2) \in A_i$, $(y_1,y_2) \in A_j$, $(z_1,z_2) \in A_k$, $a,b,c \in \mb{F}_q^*$
with $(x_1,x_2)-(y_1,y_2)=m_{i,j}(a,a^2)$, $(y_1,y_2)-(z_1,z_2)=m_{j,k}(b,b^2)$,
$(z_1,z_2)-(x_1,x_2)=m_{k,i}(c,c^2)$. Then $m_{i,j}a+m_{j,k}b+m_{k,i}c=0$
and $m_{i,j}a^2+m_{j,k}b^2+m_{k,i}c^2=0$, so
\begin{eqnarray*}
0 &=& (m_{i,j}+m_{j,k})(m_{i,j}a^2+m_{j,k}b^2+m_{k,i}c^2)-(m_{i,j}a+m_{j,k}b+m_{k,i}c)^2 \\
&=& m_{i,j}m_{j,k}(a-b)^2 + (m_{i,j}+m_{j,k}-m_{k,i})m_{k,i}c^2 - 2(m_{i,j}a+m_{j,k}b)m_{k,i}c \\
&=& m_{i,j}m_{j,k}(a-b)^2 + (m_{i,j}+m_{j,k}+m_{k,i})m_{k,i}c^2\,,
\end{eqnarray*}
and hence
\[-(m_{i,j}+m_{j,k}+m_{k,i})m_{i,j}m_{j,k}m_{k,i}=m_{i,j}^2m_{j,k}^2(a-b)^2/c^2
\,.\]
Thus we can make $G_{q,t}$ triangle-free if we arrange that
$m_{i,j,k}=-m_{i,j}m_{j,k}m_{k,i}(m_{i,j}+m_{j,k}+m_{k,i})$ is a non-residue
for all distinct $i,j,k$.
(Note that $m_{i,j,k}$ does not depend on the order of $i,j,k$.) This proves Claim 1.

\medskip

{\bf Claim 2.} {\it $G_{q,t}$ is $K_{2,2t + 1}$-free if $m_{j,k} \ne -m_{k,i}$ for all distinct $i,j,k$.}

\medskip

{\it Proof of Claim 2.} Each pair $(A_i,A_j)$ induces a $C_4$-free bipartite graph, so any pair of vertices
in a given $A_i$ has at most one common neighbor in each $A_j$ with $j \ne i$.
Thus in any potential $K_{2,2t+1}$ the part of size $2$ must use two different parts,
say we have $x \in A_i$ and $y \in A_j$ with $2t+1$ common neighbors.
There are $t$ parts besides $A_i$ and $A_j$, so one of them, say $A_k$,
contains at least $3$ common neighbors of $x$ and $y$. Then we obtain
distinct $a,c,e \in \mb{F}_q^*$ and distinct $b,d,f \in \mb{F}_q^*$ such that
\[ m_{j,k}a+m_{k,i}b=m_{j,k}c+m_{k,i}d=m_{j,k}e+m_{k,i}f\]
and
\[ m_{j,k}a^2+m_{k,i}b^2=m_{j,k}c^2+m_{k,i}d^2=m_{j,k}e^2+m_{k,i}f^2.\]
Since
\[ m_{j,k}m_{k,i}(a-b)^2 = (m_{j,k}+m_{k,i})(m_{j,k}a^2+m_{k,i}b^2)-(m_{j,k}a+m_{k,i}b)^2\]
this gives $(a-b)^2=(c-d)^2=(e-f)^2$. On taking square roots, we can assume
without loss of generality that the first two terms get the same sign, i.e. $a-b=c-d$,
so $a-c=b-d$. We also have $m_{j,k}(a-c)=-m_{k,i}(b-d)$, so if $m_{j,k} \ne -m_{k,i}$
we get $a=c$ and $b=d$, which contradicts distinctness.
Thus we can make $G_{q,t}$ be $K_{2,2t+1}$-free if we arrange that
$m_{j,k} \ne -m_{k,i}$ for all distinct $i,j,k$. This proves Claim 2.

\medskip

It remains to show that we can choose multipliers $m_{i,j}$ satisfying
the conditions in Claims 1 and 2. Intuitively, this should be easy
for fixed $t$ and large $q$ by choosing random multipliers: then we will be unlikely
to have $m_{j,k} = -m_{k,i}$, and given $m_{i,j}$, $m_{j,k}$, the choice of $m_{k,i}$
will be equally likely to make $m_{i,j,k}$ a residue or a non-residue. In fact,
it is even possible to choose multipliers greedily to satisfy the conditions,
which we shall do.
To simplify our task we choose $q > 2^{t^4}$,
arbitrarily relabel the multipliers as $m_1,m_2,\dots,m_T$,
where $T=\binom{t+2}{2}$, and choose them so that each $m_i$ is a residue,
we do not have $m_i = m_j$ or $m_i = -m_j$ with $i \ne j$, and
$-(m_i+m_j+m_k)$ is a non-residue for all distinct $i,j,k$: since the product
of a residue and a non-residue is a non-residue, this will suffice.

\medskip

We can easily choose residues $m_1$ and $m_2$ with $m_1 \ne m_2$ and $m_1 \ne -m_2$.
Suppose inductively that we have chosen $m_1,m_2,\dots,m_r$ satisfying these
conditions for some $2 \le r < T$. We use Weil's inequality in the
following form
(see, e.g., \cite[Chapter 5]{LN}):

\medskip

\begin{prop}\label{Weil}
Suppose $\chi$ is the quadratic character, i.e. $\chi(x)$ is $1$ if $x$ is a residue,
$-1$ if $x$ is a non-residue, or $0$ if $x=0$. Suppose also that $f \in \mb{F}_q[X]$
is a polynomial of degree $d \ge 1$ that is not the square of another polynomial. Then
$\left|\sum_{x \in \mb{F}_q}\chi(f(x))\right| \le (d-1)\sqrt{q}$.
\end{prop}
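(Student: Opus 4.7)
This is a classical result (Weil's bound on character sums), and the plan is to cite it rather than reprove it: the real work of this section lies in the subsequent greedy construction of the multipliers $m_i$, for which Weil's inequality is only a tool. The most concise route is to quote the statement from \cite[Chapter~5]{LN} and move on. For completeness I would include a brief geometric sketch of why it holds.

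First I would reduce to the case that $f$ is squarefree. Writing $f = g^2 h$ with $h$ squarefree (this is possible precisely because $f$ is not itself a square), one has $\chi(f(x)) = \chi(h(x))$ whenever $g(x) \neq 0$, so the two character sums differ by at most $\deg g$ terms, and the bound for general $f$ follows from the squarefree case with an additive loss that is absorbed into the factor $d-1$. Next I would interpret the sum geometrically: the number of affine $\mb{F}_q$-points on the hyperelliptic curve $C : y^2 = f(x)$ equals $\sum_{x \in \mb{F}_q}\bigl(1 + \chi(f(x))\bigr)$ up to a correction at the roots of $f$, so
\[ \sum_{x \in \mb{F}_q}\chi(f(x)) = \#C(\mb{F}_q) - q + O(1), \]
where the $O(1)$ accounts for roots of $f$ and behavior at infinity.

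The final step is to apply the Riemann hypothesis for curves over finite fields (Weil's theorem): a smooth projective curve of genus $g$ over $\mb{F}_q$ satisfies $\bigl|\#C(\mb{F}_q) - (q+1)\bigr| \le 2g\sqrt{q}$. The hyperelliptic curve attached to a squarefree $f$ of degree $d$ has genus $\lfloor (d-1)/2 \rfloor$, and a careful accounting of the singular and infinite contributions yields the sharp bound $(d-1)\sqrt{q}$. The main obstacle is Weil's theorem itself, which lies well outside combinatorics and requires the machinery of zeta functions of curves and Frobenius eigenvalues; an alternative route is Stepanov's elementary polynomial-method proof, which works with slightly worse constants. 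In the context of this paper the cleanest option is simply to invoke the statement from \cite{LN} as a black box, and to devote the remaining space to the greedy argument that selects $m_{r+1}$ given $m_1,\ldots,m_r$, where one applies the proposition to the polynomial $\prod_{i<j\le r}\bigl(-(m_i+m_j+X)\bigr)$ (not a square, generically of degree $r(r-1)/2 \ll \sqrt{q}$) to show that the number of admissible choices of $m_{r+1}$ is $q/2 - O(r^2 \sqrt{q})$, which is positive for $q > 2^{t^4}$.
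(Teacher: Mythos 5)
Your approach matches the paper's exactly: Proposition~\ref{Weil} is stated with a pointer to \cite[Chapter~5]{LN} and no proof is supplied, since Weil's bound is treated purely as a black-box tool for the greedy selection of the multipliers $m_i$. The geometric sketch you add for completeness is a reasonable outline of the standard argument (reduction to the squarefree case, point-counting on the hyperelliptic curve $y^2=f(x)$, and the Riemann hypothesis for curves), though it is not in the paper and, like the paper's own phrasing, glosses over the edge case $f = c\,g^2$ with $c$ a non-square constant, which is harmless here because the polynomials $f_S$ actually used are squarefree.
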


Let $I=\{-m_i-m_j: 1 \le i<j \le r\}$ and $g(x) = \prod_{m \in I}
(1-\chi(m-x^2))$.
Note that if $m-x^2$ is a non-residue for all $m \in I$ then $g(x)=2^{|I|}$.
On the other hand, if $m-x^2$ is a residue for any $m \in I$ then $g(x)=0$.
There are at most $2|I|$ values of $x$ such that $m-x^2$ is zero for some $m
\in I$
and each of these gives a value of $g(x)$ between $0$ and $2^{|I|-1}$.
Let $X$ be the set of $x$ such that $m-x^2$ is a non-residue for all $m \in
I$.
Then we have
\[ \Bigl|\sum_{x \in \mb{F}_q} g(x) - 2^{|I|}|X|\Bigr| \leq 2^{|I|}|I|.\]
We can also expand the product in the definition of $g$ to get
\[ \sum_{x \in \mb{F}_q} g(x) = q + \sum_{\emptyset \ne S \sub I} (-1)^{|S|}
\sum_{x \in \mb{F}_q} \prod_{m \in S} \chi(m-x^2).\]
Applying Weil's inequality to the functions
\[ f_S = \prod_{m \in S} \chi(m-x^2)\]
we have
\[ \Bigl|\sum_{x \in \mb{F}_q} g(x) - q\Bigr| \leq \sum_{\emptyset \ne S \sub I}
(2|S|-1)\sqrt{q}\le 2^{|I|}(2|I|-1)\sqrt{q}\,.\]
We deduce that $|2^{|I|}|X| - q| \leq 2^{|I|}|I|+2^{|I|}(2|I|-1)\sqrt{q}$.
Since $q>2^{t^4}$ we have $|X| > 4r$, so we can choose $m_{r+1}=x^2$ for some $x \in X$
such that $m_{r+1}$ is not equal to $m_i$ or $-m_i$ for any $1 \le i \le r$.
Therefore we can choose the multipliers $m_1,m_2,\dots,m_T$
satisfying the conditions in Claims 1 and 2.

\section{Concluding remarks}

\noindent $\bullet$ In this paper we give a different and new approach to the Erd\H{o}s-Simonovits conjecture,
employing Scott's sparse version of Szemer\'{e}di's Regularity Lemma. This approach
could potentially be used to give broader structural understanding of extremal
$\mathcal{F}$-free graphs when $\mathcal{F}$ consists of bipartite graphs
together with graphs of chromatic number $r$. In the present paper, we deal
with $r = 3$. More generally, let $\mathcal{C}_k^r$ denote the family of all
graphs of chromatic number $r$ with at most $k$ vertices, and $\mathcal{C}^r$
denote the family of all graphs of chromatic number $r$.
Generalizing the Erd\H{o}s-Simonovits conjecture, we pose the following problem:

\begin{prob}\label{prob:relturan} Let $\mathcal{F}$ be a family of bipartite graphs. Determine whether there exists an integer $k$ such that
\[\ex{n}{\mathcal{F}\cup\mathcal{C}_k^r} \sim \ex{n}{\mathcal{F}\cup\mathcal{C}^r}\,.\]
\end{prob}

This would suggest that the extremal density of an $\mathcal{F}$-free graph without any small
$r$-chromatic subgraphs is asymptotic to the extremal density of an $\mathcal{F}$-free
$(r-1)$-partite graph. A key obstacle is to find an appropriate embedding lemma
analogous to
Lemma \ref{lem:embed}.

\noindent $\bullet$ Exact results are rare for bipartite Tur\'an problems.
Even for $C_4$, while the exact result for $\zz{n}{n}{C_4}$ is relatively straightforward,
F\"uredi's exact result for $\ex{n}{C_4}$ is an intricate argument.
No exact result is known for $\zz{n}{n}{K_{2,3}}$ (except for finitely many $n$).
Any construction achieving the natural upper bound would provide a positive
answer to the famous biplane problem, which asks if there are infinitely many $n$
such that there is a family $A_1,\cdots,A_n$ of subsets of an $n$-element set
satisfying $|A_i \cap A_j|=2$ for $i \ne j$.
Suppose that there is a bipartite graph with parts $X$ and $Y$ of size $n$ such that
every pair of vertices in $X$ has exactly $2$ neighbors in $Y$.
Then the set of all neighborhoods of vertices in $X$ is a biplane on $Y$.

\noindent $\bullet$ Unsurprisingly, it is not in general easy to show that any
particular family $\mathcal{F}$ is smooth. Besides $K_{2,t}$ and $K_{3,3}$, we
show that the family $\{K_{2,t},B_t\}$ is smooth for all $t\ge
2$ (where $B_t$ is the book of $4$-cycles with $t$ pages). We cannot at present
prove, however, that $B_t$ itself is smooth. We intend to investigate smooth
families in future work.

\end{document}